\newcommand{\sH}{\mathscr{H}}
\newcommand{\sA}{\mathscr{A}} 
\newcommand{\R}{\mathbf{R}}
\newcommand{\C}{\mathbf{C}}
\newcommand{\bS}{\mathbf{S}}
\newcommand{\PSLR}{\mathrm{PSL}_{2}(\mathbf{R})}
\newcommand{\HH}{\mathbf{H}^{n}}
\newcommand{\HHI}{\mathbf{H}^{\infty}}
\newcommand{\Isomn}{{\rm Isom}(\HH)}
\newcommand{\Isomi}{{\rm Isom}(\HHI)}
\newcommand{\Isom}{{\rm Isom}}
\newcommand{\simil}{{\rm Sim}(\mathbf{R}^{\ell})}
\newcommand{\Out}{{\rm Out}}
\newcommand{\LLl}{{\rm L}^{2}(\mathbf{R}^{\ell})}
\newcommand{\LLB}{{\rm L}^{2}(\partial \HH)}
\newcommand{\corad}{{\rm corad}} 
\newcommand{\pinew}{\widetilde{\pi}}
\newcommand{\rhonew}{\widetilde{\varrho}}
\newcommand{\BNEW}{\widetilde{B}}
\newcommand{\se}{\subseteq}
\newcommand{\lra}{\longrightarrow}
\newcommand{\Id}{\mathrm{Id}}
\newcommand{\jac}{\mathrm{Jac}}
\newcommand{\cat}[1]{{\upshape CAT({\ensuremath#1})}\xspace}
\numberwithin{equation}{section}
\newtheorem{main}{Theorem}[section]
\newtheorem{prob}[main]{Problem}
\newtheorem{prop}[main]{Proposition}
\newtheorem{lemma}[main]{Lemma}
\newtheorem{rem}[main]{Remark}
\newtheorem{cor}[main]{Corollary}
\newtheorem{exam}[main]{Example}
\newtheorem{imain}{Theorem}
\newtheorem{iprop}[imain]{Proposition}
\theoremstyle{definition}
\newtheorem{nota}[main]{Notation}
\newtheorem{defi}[main]{Definition}
\title[An exotic deformation of the hyperbolic space]{An exotic deformation\\ of the hyperbolic space}
\author[Nicolas Monod]{Nicolas Monod$^*$}
\address{EPFL, Switzerland}
\email{nicolas.monod@epfl.ch}
\thanks{$^*$Supported in part by the Swiss National Science Foundation and the ERC}
\author[Pierre Py]{Pierre Py}
\address{IRMA, Universit\'e de Strasbourg \& CNRS, 67084 Strasbourg, France}
\email{ppy@math.unistra.fr}
\begin{document}

\begin{abstract}
On the one hand, we construct a continuous family of non-isometric proper \cat{-1} spaces on which the isometry group $\Isomn$ of the real hyperbolic $n$-space acts minimally and cocompactly. This provides the first examples of non-standard \cat{0} model spaces for simple Lie groups.

On the other hand, we classify all continuous non-elementary actions of ${\rm Isom}(\mathbf{H}^{n})$ on the infinite-dimensional real hyperbolic space. It turns out that they are in correspondence with the exotic model spaces that we construct.
\end{abstract}
\maketitle

\tableofcontents
\newpage


\section{Introduction}
\subsection{Overview}
Let $\Isomn \cong \mathbf{PO}(1,n)$ be the isometry group of the real hyperbolic space $\HH$ of dimension $n\geq 2$. The following theorem will be restated more precisely later in this introduction:

\begin{imain}\label{thm:bref}
There is a continuous family $\{C_t\}_{0<t\leq 1}$ of proper \cat{-1} spaces equipped with a continuous family of cocompact minimal isometric $\Isomn$-actions, abutting to $C_1=\HH$. The spaces $C_t$ are mutually non-isometric (even after rescaling) and satisfy ${\rm Isom}(C_t) =\Isomn$.
\end{imain}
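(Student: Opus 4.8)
The plan is to realize each $C_t$ as a ``hyperbolic filling'' of a one-parameter deformation of the conformal geometry of the sphere at infinity, and to extract every asserted property from the effect of that deformation on cross-ratios. Recall that $\partial\HH\cong\bS^{n-1}$ carries its standard M\"obius structure: a conformal gauge of visual metrics $\{\varrho_x\}_{x\in\HH}$ obeying the scaling law $\varrho_{gx}(\xi,\eta)=\lambda_g(\xi)^{1/2}\lambda_g(\eta)^{1/2}\varrho_x(g\xi,g\eta)$, on which $\Isomn$ acts as the full group of cross-ratio preserving homeomorphisms. For $0<t\le 1$ I pass to the gauge $\{\varrho_x^{\,t}\}$, i.e.\ I raise the cross-ratio $[\xi\eta;\zeta\omega]$ to the power $t$. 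This is again a genuine M\"obius structure: $\varrho_x^{\,t}$ is a metric because $r\mapsto r^{t}$ is subadditive; the Ptolemy inequality valid on the round sphere persists because $a\le b+c$ forces $a^{t}\le b^{t}+c^{t}$; and the scaling law becomes $\varrho_{gx}^{\,t}=\lambda_g(\xi)^{t/2}\lambda_g(\eta)^{t/2}\varrho_x^{\,t}(g\xi,g\eta)$, of the same form, so $\Isomn$ still acts by ``M\"obius'' transformations of the deformed structure. At $t=1$ one recovers the original one.

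From the deformed M\"obius space $(\bS^{n-1},\varrho_o^{\,t})$ I then build a geodesic, \emph{proper} \cat{-1} space $C_t$ together with a canonical identification $\partial C_t\cong(\bS^{n-1},\varrho_o^{\,t})$ of M\"obius spaces and an induced isometric $\Isomn$-action (obtained by a hyperbolic-cone/filling construction over $(\bS^{n-1},\varrho_o^{\,t})$, refined so as to be genuinely \cat{-1} rather than merely Gromov hyperbolic; a concrete avatar is a suitably $t$-snowflaked upper half-space). Properness is inherited from compactness (doubling) of the boundary, and $C_1=\HH$. The $\Isomn$-action is cocompact for the same reason as on $\HH$ --- hyperbolic elements translate along geodesic rays, so the translates of a single bounded ball cover $C_t$ --- and minimal because $\Isomn$ acts as the full M\"obius group on $\partial C_t$: an invariant closed convex subset is either bounded, which is impossible for a noncompact group, or meets $\partial C_t$ in a nonempty invariant closed set, which must be all of $\partial C_t$, forcing the subset to be $C_t$. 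Continuity of $t\mapsto C_t$ (say for pointed Gromov--Hausdorff convergence) follows from continuity of $t\mapsto\varrho_o^{\,t}$, and the family abuts to $C_1=\HH$.

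For $\Isom(C_t)=\Isomn$ and the non-isometry assertion I use that an isometry between two of these spaces, or between one of them and a rescaling $\lambda C_s$ of another, extends to a homeomorphism of boundary spheres carrying one deformed M\"obius structure onto the other. Since rescaling $C_s$ by $\lambda$ replaces $\varrho_o$ by $\varrho_o^{\,\lambda}$, hence the cross-ratio by its $\lambda s$-th power, such an isometry yields a self-homeomorphism $f$ of $\bS^{n-1}$ conjugating the standard cross-ratio to its $t/(\lambda s)$-th power. A power-rigidity argument rules this out unless the exponent is $1$: normalizing $f$ (by composing with an element of $\Isomn$) so that it fixes two boundary points, one finds that $|f(\zeta)|$ is a fixed power of $|\zeta|$, which is incompatible with the cross-ratio relation for a further generic quadruple unless that power equals $1$. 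Hence $t=\lambda s$, and taking $t$-th roots shows $f$ preserves the honest cross-ratio, i.e.\ $f\in\Isomn$; combined with the construction this gives $\Isom(C_t)=\Isomn$. To upgrade $t=\lambda s$ into genuine non-isometry when $t\ne s$, I recover the value of $t$ from a \emph{scale-invariant} metric quantity --- for instance the isometry type of the tangent cones of $C_t$, which rescaling leaves unchanged --- so that $C_t\cong\lambda C_s$ forces $t=s$ and $\lambda=1$.

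The decisive difficulty is the realization step: producing a \emph{proper} space with the prescribed deformed boundary that satisfies the full \cat{-1} comparison inequality --- Gromov hyperbolicity of such fillings is routine, but the triangle and quadrilateral comparisons become delicate once $t<1$, and one must also be sure the space is genuinely geodesic over the non-geodesic base $(\bS^{n-1},\varrho_o^{\,t})$ --- together with the verification that $\Isomn$ acts cocompactly on this genuinely non-homogeneous space. The rigidity is comparatively routine, although the ``even after rescaling'' clause really does require the extra scale-invariant quantity mentioned above.
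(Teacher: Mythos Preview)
Your intuition that the family $C_t$ should be indexed by a snowflaking of the visual metric on $\partial\HH$ is correct in spirit --- in the paper's construction the boundary map $f_t|_{\partial\HH}$ does realise exactly such a deformation of cross-ratios --- but your proposal leaves the central step open. You yourself identify ``the decisive difficulty'' as producing a proper \cat{-1} filling of $(\bS^{n-1},\varrho_o^{\,t})$, and you do not supply one: a Bonk--Schramm type hyperbolic cone gives only Gromov hyperbolicity, and a ``$t$-snowflaked upper half-space'' is not known to satisfy the \cat{-1} comparison. Without this, nothing else in the argument can proceed. There is a further concrete error in the cocompactness step: the claim that ``hyperbolic elements translate along geodesic rays, so the translates of a single bounded ball cover $C_t$'' would prove cocompactness for any \cat{-1} $\Isomn$-space with the right boundary, which is false (take $C_t\times[0,\infty)$, say, or simply observe that the $\Isomn$-action on $C_t$ is \emph{not} transitive for $t<1$). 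Finally, your rescaling analysis is off: multiplying the metric of a \cat{-1} space by $\lambda$ does not raise the boundary cross-ratio to a power, and the ``tangent cone'' invariant you invoke to separate $t$ from $s$ is not identified.

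The paper proceeds along an entirely different route that bypasses the filling problem. It constructs, for each $t\in(0,1)$, an explicit irreducible representation $\varrho_t\colon\Isomn\to\Isomi$ coming from the spherical principal series at parameter $s=\frac12+\frac{t}{n-1}$, together with an equivariant harmonic map $f_t\colon\HH\to\HHI$ that is a $(t,D)$-quasi-isometry. The space $C_t$ is then \emph{defined} as the closed convex hull of $f_t(\partial\HH)$ inside $\HHI$; this is automatically \cat{-1}, and local compactness follows from Mazur's compactness theorem for norm-compact convex hulls in the Klein model. Cocompactness is obtained by a genuine argument using Gromov products and the quasi-isometry constants of $f_t$; $\Isom(C_t)=\Isomn$ via Yamabe's theorem and the structure of $\Out(\Isomn^\circ)$; and mutual non-isometry comes from comparing two independent numerical invariants, the translation-length ratio $t$ and the intrinsic curvature $-n/(t(t+n-1))$ of the distinguished minimal submanifold $f_t(\HH)\subset C_t$, which together pin down both $t$ and the scale. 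The upshot is that the hard analytic work is done by representation theory (intertwining operators, the explicit eigenvalues $\lambda_k(s)$) rather than by a hypothetical \cat{-1} filling construction.
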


This stands in sharp contrast to the fact that \emph{geodesically complete} \cat{0} model spaces for simple Lie groups are unique up to scaling, as proved in~\cite[Theorem~1.4]{capracemonod} (and thus answers Question~7.2 in~\cite{capracemonod2}).

\bigskip

The spaces $C_t$ of Theorem~\ref{thm:bref} will appear as canonical minimal invariant convex subsets in the \emph{infinite-dimensional} hyperbolic space $\HHI$ for a family of representations of $\Isomn$ on $\HHI$. These representations will be deformations of the standard embedding of $\Isomn$ into $\Isomi$ and likewise one can view $C_t$ as a deformation of the standard image $C_1=\HH\se \HHI$ arising as the limit of the composed standard embeddings $\mathbf{H}^{n}\se \mathbf{H}^{n+1}\se \mathbf{H}^{n+2} \ldots$

\medskip

Accordingly, a large part of this article is devoted to the study of representations of $\Isomn$ into $\Isomi$. We shall indeed classify the (continuous, isometric) actions of $\Isomn$ on $\HHI$.

We now pass to a more detailed presentation of the content of this article. 

\subsection{Context}
Consider a real Hilbert space $\mathscr{H}$ and a line $L$ in $\sH$. Let $B$ be the bilinear form on $\sH$ determined by the quadratic form 
$$B(v,v)=\vert v_{L}\vert^{2}-\vert v_{L^{\perp}}\vert^{2},$$
where $v_{L}$ and $v_{L^{\perp}}$ are the projections of $v$ on $L$ and $L^{\perp}$. We fix a non-zero vector $e\in L$. As in the finite-dimensional case, define the space $\HHI$ by the hyperboloid model:
$$\HHI:=\{v\in \mathscr{H}: B(v,v)=1, B(v,e)>0\}.$$
The formula 
$$\cosh(d_{\HHI}(x,y))=B(x,y)$$

\noindent defines a distance $d_{\HHI}$ on $\HHI$ which turns it into a complete \cat{-1} metric space. We will denote by $\partial \HHI$ the boundary of $\HHI$, which can be identified with the Grassmannian of $B$-isotropic lines in $\mathscr{H}$, and by $\Isomi$ its isometry group. The study of this space, as well as that of certain infinite-dimensional  symmetric spaces of finite rank, was suggested by Gromov in~\cite{gromov}. We refer the reader to~\cite{bim} for a detailed study of the space $\HHI$, and to~\cite{duchesne,duchesne2} for the case of higher rank symmetric spaces. This is the geometric viewpoint on the classical theory of \textit{Pontryagin spaces}~\cite{ioh,naimark1963,naimark2,naimark3,naimark4,pontryagin}.

\medskip

 An action $\varrho \colon  G \to \Isomi$ of a group $G$ on $\HHI$ is called elementary if it fixes a point in $\HHI \cup \partial \HHI$ or if it preserves a geodesic in $\HHI$. If $\varrho$ is non-elementary, one can prove that there exists a unique closed totally geodesic subspace $\HHI_{\varrho}$ of $\HHI$ which is $G$-invariant and minimal for this property (see~\cite{bim}). It is easily checked that an action $\varrho\colon G \to \Isomi$ is non-elementary and satisfies $\HHI_{\varrho}=\HHI$ if and only if the associated linear action of $G$ on $\mathscr{H}$ is irreducible. In this case, we will simply say that the action of $G$ on $\HHI$ is irreducible.  

\medskip

 It is a classical fact that any continuous isometric action of the group $\Isomn$ on a {\it finite-dimensional} symmetric space of non-compact type $X$ preserves the image of a totally geodesic embedding $\HH \hookrightarrow X$. This is a particular case of a theorem of Karpelevich and Mostow~\cite{karpe,mostow} (see also~\cite{gp}, or~\cite{boubelzeghib} for a geometric proof in the hyperbolic case). The starting point of this article is the following fact, showing that the Karpelevich--Mostow theorem does not hold anymore in the infinite-dimensional context:

\smallskip
\begin{center}
\begin{minipage}[t]{0.85\linewidth}
\itshape
There exist continuous irreducible isometric actions of $\Isomn$ on the space $\HHI$.
\end{minipage}
\end{center}

\smallskip
\noindent In other words, there exist irreducible representations of $\Isomn$ on a Hilbert space which are not unitary but which instead preserve a quadratic form of index $1$. More generally, there also exist irreducible representations of the group $\Isomn$ on a Hilbert space which preserve a quadratic form of index $p$, for certain values of $p>1$ (see Section~\ref{higherrank}). These facts are well-known to representation theorists and can be found, for instance, in~\cite{johnsonwallach}. 

These representations were put in a geometric context in~\cite{delzantpy} where the authors showed how actions of $\PSLR$ on $\HHI$ arise in the study of fundamental groups of compact K\"{a}hler manifolds. There are other examples of groups acting on an infinite-dimensional hyperbolic space: for the Cremona group, see~\cite{cantat}, for automorphism groups of trees, see~\cite[Chap.~6]{gromov} and~\cite{bim}.

\subsection{Results}

If $\varrho \colon  G \to \Isomi$ is an action of a group $G$ on $\HHI$, we will denote by $\ell_{\varrho}$ the associated {\it translation length function}, i.e. the function defined by $$\ell_{\varrho}(g)=\underset{x\in \HHI}{{\rm inf}}d_{\HHI}(\varrho(g)(x),x).$$ \noindent We will denote by $\ell_{\HH}$ the translation length function for the standard action of $\Isomn$ on $\HH$. We can now state our first main result, which classifies the (continuous) irreducible representations $\Isomn \to \Isomi$.

\begin{imain}\label{class}
\ 
\begin{enumerate}
\item Let $\varrho \colon  \Isomn \to \Isomi$ be a continuous non-elementary action. There exists $t\in (0,1]$ such that $\ell_{\varrho}(g)=t\ell_{\HH}(g)$ for all $g\in \Isomn$. Moreover, $t=1$ if and only if $\varrho$ preserves an $n$-dimensional totally geodesic subspace of $\HHI$.\label{class:exist}
\item For each $t\in (0,1)$ there is, up to conjugacy in $\Isomi$, exactly one irreducible continuous representation $\varrho_{t} \colon  \Isomn \to \Isomi$ such that $\ell_{\varrho_{t}}=t\ell_{\HH}$.
\end{enumerate}
\end{imain}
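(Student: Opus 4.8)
The plan is to pass from the geometry of an action on $\HHI$ to the representation theory of $\Isomn$, to extract from it a $K$-bi-invariant matrix coefficient, and to identify the latter with a spherical function of $\Isomn$ lying just past the complementary range. First I would reduce to the irreducible case: replacing $\HHI$ by the minimal invariant totally geodesic subspace $\HHI_\varrho$ affects neither $\ell_\varrho$ nor the existence of an invariant finite-dimensional totally geodesic subspace — the invariance of $\ell_\varrho$ because the nearest-point projection onto the closed convex $\varrho(\Isomn)$-invariant set $\HHI_\varrho$ is $1$-Lipschitz and equivariant. So assume the linear action $\pi$ of $\Isomn$ on $\mathscr H$ is irreducible and preserves $B$, of index exactly one. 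Fix a maximal compact $K<\Isomn$, the stabiliser of a point $o_0\in\HH$. Being compact, $\varrho(K)$ has bounded orbits in the complete \cat{0} space $\HHI$ and hence fixes a point $o$ (a circumcentre); i.e. there is a $K$-invariant $o\in\mathscr H$ with $B(o,o)=1$. Set
\[
\varphi(g):=B(o,\pi(g)o)=\cosh d_{\HHI}\!\big(o,\varrho(g)o\big)\ \geq\ 1 .
\]
By $K$-bi-invariance $\varphi(g)=F\big(d_\HH(o_0,go_0)\big)$ for some $F\colon[0,\infty)\to[1,\infty)$; since translation lengths in \cat{0} spaces are stable and $\mathrm{arccosh}\,\varphi(g^m)=\log\varphi(g^m)+O(1)$, part~(1) reduces to showing $\log F(r)\sim tr$ as $r\to\infty$ for a suitable $t\in(0,1]$.

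Next I would identify $\varphi$ representation-theoretically. On analytic vectors (in particular on the $K$-finite vector $o$) the centre $Z(\mathfrak g)$ of the enveloping algebra acts, and using irreducibility of $\pi$ one gets that it acts by scalars, so $\varphi$ is a $K$-bi-invariant eigenfunction of the radial Laplacian on $\HH$, hence $F$ is a Harish-Chandra spherical function $\varphi_\lambda$. Reality of $\varphi$ forces $\lambda\in\R\cup i\R$; non-elementarity excludes $\varphi$ bounded, hence the tempered range $\lambda\in i\R$ and the value $\lambda=\rho:=\tfrac{n-1}2$ (giving $\varphi\equiv1$, a fixed point); and the index of the invariant form being exactly one — see the next step — forces $\lambda\in(\rho,\rho+1]$. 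Writing $t:=\lambda-\rho\in(0,1]$, the asymptotic $\varphi_\lambda(a_r)\sim\mathbf c(\lambda)\,e^{(\lambda-\rho)r}$ along a geodesic one-parameter subgroup $a_r$ yields $\log F(r)\sim tr$, so $\ell_\varrho=t\,\ell_\HH$, which proves~(1). Finally $t=1$, i.e. $\lambda=\rho+1$, is precisely the reducibility point at which the spherical form degenerates and descends to the $(n{+}1)$-dimensional factor $\R^{1,n}$ with its Lorentzian form of index one; conversely, if $\varrho$ preserves an $n$-dimensional totally geodesic subspace then the induced continuous action of $\Isomn$ on $\HH$ is the standard one by the Karpelevich--Mostow theorem, so $t=1$.

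The technical core of~(2) is that for $\lambda\in(\rho,\rho+1)$ the $\Isomn$-invariant symmetric bilinear form on the spherical principal series $I_\lambda$ has exactly one positive square, and that for $\lambda>\rho+1$ it does not. This is explicit: on the $K$-type $\mathcal H_k$ of degree-$k$ spherical harmonics on $\bS^{n-1}$ the form is a scalar $a_k(\lambda)$ times the $K$-invariant inner product, and, normalising $a_0(\lambda)>0$,
\[
\frac{a_k(\lambda)}{a_0(\lambda)}=\prod_{j=0}^{k-1}\frac{\rho+j-\lambda}{\rho+j+\lambda}\,,
\]
so for $\lambda\in(\rho,\rho+1)$ only the factor $j=0$ is negative, giving $a_0>0$ and $a_k<0$ for all $k\geq1$, i.e. signature $(1,\infty)$ — exactly that of $B$ — whereas for $\lambda\in(\rho+k,\rho+k+1)$ one reads off further sign changes and the signature is never again $(1,\infty)$. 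Existence of $\varrho_t$, $t\in(0,1)$, follows: the kernel $(x,y)\mapsto\varphi_{\rho+t}(d_\HH(x,y))$ on $\HH$ has exactly one positive square, and its GNS construction produces an irreducible continuous isometric $\Isomn$-action on $\HHI$ (equivalently, one completes $I_{\rho+t}$ with respect to the majorant of its invariant form); the asymptotics give $\ell_{\varrho_t}=t\,\ell_\HH$. Uniqueness is the matching GNS-type rigidity: an irreducible continuous $\varrho$ with $\ell_\varrho=t\,\ell_\HH$ yields the pair $(\varphi_{\rho+t},o)$, which determines the underlying module up to isomorphism, the invariant form up to a positive scalar (irreducibility, reality, $B(o,o)>0$), and hence $\varrho$ up to conjugacy in $\Isomi$.

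I expect the main obstacle to be twofold. First, the soft step from a continuous isometric action on $\HHI$ — a representation on a Pontryagin space of index one — to an honest admissible $(\mathfrak g,K)$-module and a single Harish-Chandra spherical function: one must rule out non-admissibility, $Z(\mathfrak g)$ failing to act by scalars, or $\varphi$ being a genuine superposition, and the classical arguments for these facts are phrased in the unitary setting and need adaptation. Second, the sharp bookkeeping of the index in the third step — not merely that the invariant form becomes indefinite past the complementary range, but that it has index precisely one on $(\rho,\rho+1]$ and strictly fails thereafter — is where the rank-one structure of $\Isomn$ (reducibility of $I_\lambda$ exactly at $\rho+\Z_{\geq0}$, and the explicit $\Gamma$-factor ratios) is indispensable; this is the step I would expect to carry most of the weight.
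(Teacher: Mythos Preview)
Your route is correct in outline and genuinely different from the paper's. The paper never invokes spherical functions or the Casimir: it restricts $\varrho$ to a minimal parabolic $P\cong\mathrm{Sim}(\R^{n-1})$, uses that the stabiliser of a point of $\partial\HHI$ is the similarity group $\mathrm{Sim}(E)$ of a Hilbert space $E$, and analyses the resulting continuous homomorphism $P\to\mathrm{Sim}(E)$ by hand. The dilation character on the axis of a hyperbolic one-parameter subgroup gives $\chi(\lambda)=\lambda^{t}$; a properness/growth argument on the translation cocycle forces $0<t\le 1$, with $t=1$ characterised by linearity of that cocycle (whence an $(n{-}1)$-dimensional $V$ and an invariant copy of $\HH$). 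Uniqueness is then obtained by showing that $\varrho$ of the extra involution $\sigma$ is determined by $\varrho|_P$ through explicit word relations, together with a Mackey-theoretic classification of the admissible orthogonal $P$-representations on $E$ and a one-dimensional cohomology computation for $\chi_t\otimes\pi_0$. Existence is the same signature computation on the spherical principal series that you sketch. The authors in fact flag your approach as a desirable alternative (``It could be interesting to search for another proof of this using the machinery of Lie algebra representations''). Your method is more conceptual and would port to other rank-one groups with little change; theirs is more elementary and entirely sidesteps the smooth-vector and Schur issues you rightly identify as the main obstacles in the indefinite setting, while also yielding finer structural information about the $P$-action. For your obstacles: the Schur-type input you need in the $\Pi_1$-setting (uniqueness of the $K$-fixed line and of the invariant form up to scalar) is available via Naimark's results on commuting symmetric operators in Pontryagin spaces --- the paper itself invokes exactly this later, in Proposition~\ref{prop:schur}, for a different purpose --- and strong continuity of the linear action on $\mathscr H$ (hence G{\aa}rding vectors and a well-defined Casimir) follows from continuity on $\HHI$ once one uses that $B$-isometries of a $\Pi_1$-space are automatically bounded.
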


Thus, all irreducible continuous representations $\varrho \colon  \Isomn \to \Isomi$ arise from an explicit one-parameter family of representations. It could be interesting to search for another proof of this using the machinery of Lie algebra representations~\cite{wallach2}.

\medskip

In what follows, we will denote by $g_{hyp}$ the Riemannian metric with constant curvature $-1$ both on $\HH$ and on $\HHI$. We also denote by $d_{\HH}$ and $d_{\HHI}$ the associated distance functions on $\HH$ and $\HHI$ respectively and by $\overline{\HHI}$ (resp. $\overline{\HH}$) the geometric bordification $\HHI \cup \partial \HHI$ (resp. $\HH \cup \partial \HH$), endowed with the cone topology~\cite[II.8]{bh}. 

\begin{imain}\label{curv} There exists a smooth harmonic $\varrho_{t}$-equivariant map $f_{t} \colon  \HH \to \HHI$ with the following properties. 
\begin{enumerate}
\item It is asymptotically isometric after rescaling, i.e.\ there is  $D\ge 0$ such that
$$\big| d_{\HHI}(f_{t}(x),f_{t}(y))\ - td_{\HH}(x,y)\big|\le D \kern 15mm(\forall\,x, y\in \HH).$$\label{curv:qi}
\item The image of $f_{t}$ is a minimal submanifold of curvature $\frac{-n}{t(t+n-1)}$ and we have
$$f_{t}^{\ast}\,g_{hyp}\ =\ \frac{t(t+n-1)}{n}\,g_{hyp}.$$\label{curv:curv}
\item The map $f_{t}$ extends to a continuous map from $\overline{\HH}$ to $\overline{\HHI}$.\label{curv:bd}
\end{enumerate}
\end{imain}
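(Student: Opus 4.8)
The plan is to construct $f_t$ as an equivariant harmonic map and then extract its geometric properties from the harmonic map equation together with the explicit structure of $\varrho_t$. First I would establish existence and equivariance. Since $\Isomn$ acts transitively on $\HH$ with compact stabilizer $K$, an equivariant map $f_t\colon \HH\to\HHI$ is determined by its value at a basepoint $o$ together with the requirement that $f_t(o)$ be fixed by $\varrho_t(K)$; because $\varrho_t$ is non-elementary (hence the linear action has no invariant positive line other than possibly through such a fixed point), one checks $\varrho_t(K)$ fixes a point in $\HHI$, giving at least one equivariant map, and harmonicity is a variational (finite-energy on fundamental domains) condition that can be solved using the negative curvature of the target and cocompactness, in the spirit of Eells--Sampson adapted to the $\Isomn$-equivariant setting and to the $\mathrm{CAT}(-1)$ target $\HHI$ (using that totally geodesic finite-dimensional subspaces exhaust $\HHI$, so one may work with the smooth finite-dimensional approximations $\mathbf{H}^{n+k}$ and pass to a limit). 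Uniqueness of the harmonic map in its equivariant homotopy class follows from strict negative curvature.

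For part~\eqref{curv:curv} I would use the Bochner/Eells--Sampson formula. By homogeneity, $f_t$ is a homogeneous map, so $f_t^{\ast}g_{hyp}$ is an $\Isomn$-invariant symmetric $2$-tensor on $\HH$; since $\Isomn$ acts transitively on the unit tangent bundle of $\HH$, such a tensor is a constant multiple $\lambda g_{hyp}$ of the metric, and in particular $f_t$ is a homothetic (hence totally geodesic up to scale, thus minimal) immersion onto its image. The constant $\lambda$ is then pinned down by computing the energy density, or more efficiently by comparing translation lengths: the image of $f_t$ is a copy of $\HH$ scaled by $\sqrt{\lambda}$, so $d_{\HHI}(f_t(x),f_t(y))=\sqrt{\lambda}\,d_{\HH}(x,y)$, and by Theorem~\ref{class}\eqref{class:exist} this scaling factor equals $t$ on translation lengths\,---\,wait, this would give $\lambda=t^2$, whereas the claimed value is $\lambda=\frac{t(t+n-1)}{n}$, so in fact $f_t$ is \emph{not} homothetic and the image is not totally geodesic; the curvature $\frac{-n}{t(t+n-1)}$ of the image (a minimal, but not totally geodesic, submanifold) must be computed from the second fundamental form via the Gauss equation, and the precise identification of $f_t^{\ast}g_{hyp}$ requires solving the harmonic map ODE explicitly along a geodesic using the explicit model of $\varrho_t$. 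Concretely, I would write $\varrho_t$ on the explicit Hilbert space (the completion of an appropriate space of functions on $\partial\HH$ with a $t$-twisted pairing, as in the representation-theoretic description referenced via~\cite{johnsonwallach}), write $f_t$ in geodesic polar coordinates, reduce the harmonic map system to a single second-order ODE in the radial variable, and solve it; the curvature and the pullback metric then drop out of the solution. The translation length identity $\ell_{\varrho_t}=t\ell_{\HH}$ from Theorem~\ref{class} serves as a consistency check (the hyperbolic element of translation length $\ell$ in $\HH$ maps to one of translation length $t\ell$ in $\HHI$, and along its axis $f_t$ restricts to a curve whose length-contraction matches).

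Part~\eqref{curv:qi} is then essentially formal: a harmonic map between Hadamard spaces with image a submanifold on which the induced metric is comparable to the domain metric up to the constant $\frac{t(t+n-1)}{n}$ and with bounded second fundamental form is a quasi-isometric embedding, and the sharper ``$t$-homothety up to bounded error'' comes from combining the $\mathrm{CAT}(-1)$ geometry of $\HHI$ with the translation length equality: since $\ell_{\varrho_t}(g)=t\ell_{\HH}(g)$ for every $g$, applying this to the hyperbolic element translating along the geodesic $[x,y]$ and using the standard $\mathrm{CAT}(-1)$ fellow-traveling estimate bounds $|d_{\HHI}(f_t(x),f_t(y))-t\,d_{\HH}(x,y)|$ by a constant $D$ depending only on $n$ and $t$. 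Finally, part~\eqref{curv:bd}: any quasi-isometric embedding between proper $\mathrm{CAT}(-1)$ spaces extends continuously to the boundary (Morse lemma for $\mathrm{CAT}(-1)$ spaces, \cite[III.H]{bh}), and although $\HHI$ is not proper, the image of $f_t$ lies in the minimal invariant totally geodesic subspace which one can take to be separable, and within a separable closed totally geodesic (hence $\mathrm{CAT}(-1)$, complete, but still possibly non-proper) subspace the quasigeodesic stability still yields a well-defined boundary map by sending a boundary point of $\HH$ represented by a geodesic ray $\gamma$ to the endpoint of the quasigeodesic ray $f_t\circ\gamma$, which exists and is continuous in the cone topology by the exponential convergence of $\mathrm{CAT}(-1)$ quasigeodesics to their ideal endpoints. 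I expect the main obstacle to be part~\eqref{curv:curv}: identifying the exact constant $\frac{t(t+n-1)}{n}$ and the curvature $\frac{-n}{t(t+n-1)}$ requires the explicit solution of the equivariant harmonic map ODE in the concrete model of $\varrho_t$, and getting the normalization exactly right (including the dependence on $n$) is where the real computation lies; everything else is soft geometry of $\mathrm{CAT}(-1)$ spaces plus the already-established Theorem~\ref{class}.
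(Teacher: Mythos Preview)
Your outline is broadly workable, but it takes a considerably more indirect route than the paper, and one of your proposed steps is both unnecessary and problematic.

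\textbf{Existence and harmonicity.} You propose to produce $f_t$ by an Eells--Sampson type argument, approximating $\HHI$ by finite-dimensional totally geodesic subspaces $\mathbf{H}^{n+k}$. This cannot work as stated: for $t\in(0,1)$ the representation $\varrho_t$ is irreducible, so there is \emph{no} $\Isomn$-invariant finite-dimensional totally geodesic subspace to approximate in, and an equivariant $f_t$ cannot be built from equivariant finite-dimensional pieces. More importantly, the whole detour is unnecessary. You already observe that equivariant maps correspond to $\varrho_t(K)$-fixed points in $\HHI$; the paper shows (Lemma~\ref{lemma:unique:k}) that this fixed point is \emph{unique}, so there is exactly one equivariant map $f_t$, namely the orbit map of~$\ast$. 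Harmonicity is then a one-line argument: the tension field $\tau(f_t)$ is equivariant, so if $\tau(f_t)(o)\neq 0$ the geodesic it spans at $\ast$ would be $K$-invariant, contradicting uniqueness of the $K$-fixed point.

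\textbf{Part~(\ref{curv:curv}).} Your homogeneity argument that $f_t^\ast g_{hyp}=\lambda\, g_{hyp}$ for some constant $\lambda$ is correct, and your self-correction (homothetic does \emph{not} imply totally geodesic) is well taken. But you propose to extract $\lambda$ by solving a harmonic-map ODE in the explicit model. The paper avoids any ODE: it simply writes down the closed formula
\[
\cosh d_{\HHI}\big(f_t(g_u\cdot o),\, f_t(o)\big)\ =\ \int_{\partial\HH}\big|\mathrm{Jac}(g_u^{-1})\big|^{\frac12+s}
\]
(coming straight from the definition of $\pi_s$ and of $\ast$), Taylor-expands the integrand to second order in $u$, and reads off $\lambda=\frac{t(t+n-1)}{n}$. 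Minimality of the image then follows because a harmonic map with conformal pullback is minimal. No second fundamental form computation is carried out; the curvature $\frac{-n}{t(t+n-1)}$ is just $-1/\lambda$.

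\textbf{Part~(\ref{curv:qi}).} Your translation-length argument is valid (with the extra observation, which you left implicit, that by transitivity of $\Isomn$ on pointed geodesics the distance from $f_t(x)$ to the axis of $\varrho_t(g)$ is a constant $r_0(t)$, whence $D=2r_0$ works). The paper instead proves the two-sided estimate $\kappa\, e^{tu}\le I_u\le e^{tu}$ directly on the integral above (the upper bound uses $\int|\mathrm{Jac}|=1$, the lower bound a cap estimate on $\bS^{n-1}$), which immediately gives~(\ref{curv:qi}). Your argument is softer and reusable; the paper's is sharper and yields the explicit upper bound $D\le\log 2$.

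\textbf{Part~(\ref{curv:bd}).} Here you agree with the paper: the estimate of~(\ref{curv:qi}) makes $f_t$ a quasi-isometric embedding between Gromov-hyperbolic spaces, hence extends continuously to the boundary. Your worry about non-properness of $\HHI$ is unnecessary for this step.

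In short: the paper's proof is an explicit computation with the single integral $I_u$, whereas you reach for general harmonic-map and $\mathrm{CAT}(-1)$ machinery. Both reach the goal for~(\ref{curv:qi}) and~(\ref{curv:bd}); for existence and for the constant in~(\ref{curv:curv}), the paper's explicit route is both simpler and avoids the genuine obstruction (irreducibility) to your proposed approximation scheme.
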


\noindent
In fact, it will be clear from the model for $\varrho_t$ that $f_t$ is the only equivariant map $\HH \to \HHI$ (Lemma~\ref{lemma:unique:k} below). When $n=2$, the existence of the family $\varrho_{t}$ and some of its properties were established in~\cite{delzantpy}. 

\bigskip
Before describing our next result, let us recall a theorem from~\cite{capracemonod}. Let $k$ be a local field and ${\bf G}$ be an absolutely almost simple simply connected $k$-group. Let $X_{\rm model}$ be the Riemannian symmetric space or Bruhat-Tits building associated with ${\bf G}(k)$. The following is proved in~\cite{capracemonod} (see Theorem~7.4 in~\cite{capracemonod} for a more precise statement): 

\smallskip
\begin{center}
\begin{minipage}[t]{0.85\linewidth}
\itshape
Let $X$ be a non-compact \cat0 space on which ${\bf G}(k)$ acts continuously and cocompactly by isometries. If $X$ is geodesically complete, then $X$ is isometric to $X_{\rm model}$ (possibly rescaled).
\end{minipage}
\end{center}

\medskip
 Without the assumption of geodesic completeness, it is known that this result can fail when $k$ is non-Archimedian. More precisely, $X$ need not contain a closed, $\mathbf{G}(k)$-invariant convex set isometric to $X_{\rm model}$ (see Example~7.6 in~\cite{capracemonod}). When $k$ is Archimedian, it was not known whether the theorem above remains true without the hypothesis of geodesic completeness (see Question~7.2 in~\cite{capracemonod2}). The next theorem shows that this is not the case, at least for the group $\Isomn$. Indeed, we shall construct exotic cocompact spaces $X$, which are even nice enough to be \cat{-1} and without branching geodesics, since they arise as convex subspaces of $\HHI$.

\smallskip
It is known that any non-elementary isometric group action on $\HHI$ admits a unique minimal non-empty closed convex invariant subspace $C\se\HHI$ (see Section~\ref{sec:Ct:def} below). In the case of the action $\varrho_t$ ($t\in (0,1)$), we denote it by $C_t$ and it turns out to provide our ``exotic'' space. In what follows, we will use the following convention: $C_{1}$ will be the hyperbolic space $\HH$ and $f_{1}$ the identity map $\HH \to C_{1}$. 

\begin{imain}\label{convexhull}
For any $t\in (0,1]$, the  \cat{-1} space $C_{t}$ is locally compact and the action of $\Isomn$ on $C_{t}$ is cocompact.

Moreover, the spaces $C_{t}$ and $C_{t'}$ are isometric only if $t=t'$ (even upon rescaling the metric).
\end{imain}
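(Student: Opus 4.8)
\textbf{Proof proposal for Theorem~\ref{convexhull}.}

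The plan is to prove local compactness and cocompactness by reducing to properties of the equivariant map $f_t$ from Theorem~\ref{curv}, and then to prove the non-isometry statement via the translation length function from Theorem~\ref{class}.

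First I would establish that $C_t$ is locally compact and that $\Isomn$ acts cocompactly. The key observation is that $f_t(\HH)$ is a closed subset of $\HHI$ whose induced path metric is, up to the constant factor $\sqrt{t(t+n-1)/n}$, the hyperbolic metric (by part~\eqref{curv:curv}), and by part~\eqref{curv:qi} the map $f_t$ is a quasi-isometric embedding with respect to the ambient distance $d_{\HHI}$. Since $\HH$ is proper and $\Isomn$ acts cocompactly on it, the image $f_t(\HH)$ is ``coarsely dense'' in any $\Isomn$-invariant subset of $\HHI$ containing it; in particular $C_t$ lies in a bounded neighbourhood of $f_t(\HH)$. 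Indeed, $C_t$ is the minimal closed convex invariant set, so $C_t$ is contained in the closed convex hull of any invariant set, and $f_t(\HH)$ is invariant, so $C_t \subseteq \overline{\mathrm{conv}}(f_t(\HH))$; conversely minimality forces $C_t$ to be within bounded distance of $f_t(\HH)$ since otherwise one could find a smaller invariant convex set (e.g. using the fact that in a \cat{-1} space the closed convex hull of a quasi-convex set lies in a bounded neighbourhood of it). Once we know $C_t$ is within Hausdorff distance $D'$ of $f_t(\HH)$ and that $\Isomn$ acts cocompactly on $f_t(\HH)$, cocompactness on $C_t$ follows. Local compactness (equivalently, since $C_t$ is complete \cat{-1}, properness) then follows because a ball of radius $R$ in $C_t$ is contained in a $D'$-neighbourhood in $\HHI$ of a ball of radius $R+D'$ in $f_t(\HH)$, which is compact (being the continuous image of a closed ball in the proper space $\HH$), and $C_t$ is closed in $\HHI$; intersecting with the closed set $C_t$ and using that $f_t$ is proper gives compactness of closed balls.

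For the non-isometry statement, suppose $\phi \colon C_t \to C_{t'}$ is an isometry (after rescaling, say $\phi$ multiplies distances by $\lambda>0$). The idea is to show $\phi$ intertwines the two $\Isomn$-actions up to an automorphism of $\Isomn$. Since $C_t$ and $C_{t'}$ are \cat{-1} with cocompact $\Isomn$-action and $\mathrm{Isom}(C_t)=\Isomn$ (here I would invoke the identification of the full isometry group, which for $C_1=\HH$ is classical and for $t<1$ should follow from the construction — the statement $\mathrm{Isom}(C_t)=\Isomn$ is asserted in Theorem~\ref{thm:bref}), conjugation by $\phi$ gives an isomorphism $\mathrm{Isom}(C_t)\to\mathrm{Isom}(C_{t'})$, i.e.\ an automorphism $\alpha$ of $\Isomn$. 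Since $\Isomn\cong\mathbf{PO}(1,n)$ has outer automorphism group of order at most $2$ and its automorphisms are all realized by conjugation inside $\mathrm{Isom}(\HH)$ (which acts on translation lengths trivially, as $\ell_{\HH}$ is a conjugacy invariant and $\ell_{\HH}\circ\alpha=\ell_{\HH}$ for any automorphism), we get that $\phi$ conjugates $\varrho_t$ to $\varrho_{t'}\circ\alpha$ with $\ell_{\varrho_{t'}\circ\alpha}=\ell_{\varrho_{t'}}=t'\ell_{\HH}$. On the other hand, an isometry scaling the metric by $\lambda$ scales all translation lengths by $\lambda$, so $\lambda\,\ell_{\varrho_t}=\ell_{\varrho_{t'}\circ\alpha}$, i.e.\ $\lambda t\,\ell_{\HH}=t'\,\ell_{\HH}$. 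This by itself only gives $t'=\lambda t$ and does not yet rule out $t\neq t'$; I must bring in a second invariant that is insensitive to rescaling. The natural candidate is the \emph{curvature}: by part~\eqref{curv:curv} of Theorem~\ref{curv}, $C_t$ contains the totally geodesic-in-the-Riemannian-sense image $f_t(\HH)$, a submanifold of constant curvature $\kappa(t):=\frac{-n}{t(t+n-1)}$, and I claim this curvature is an invariant of $C_t$ up to rescaling in the following normalized sense: the ratio of $\kappa(t)$ to $(\text{minimal curvature of a geodesic subspace})$, or more robustly, the quantity $\kappa(t)$ itself once we fix the scale by, say, normalizing $\ell$ — but since rescaling by $\lambda$ changes curvature by $\lambda^{-2}$ and changes $\ell$ by $\lambda$, the combination $\kappa(t)\cdot\ell_{\varrho_t}(g)^2$ (for any fixed hyperbolic $g$, say with $\ell_{\HH}(g)=1$) is a genuine isometry invariant of the pair $(C_t,\Isomn\text{-action})$, equal to $\frac{-n}{t(t+n-1)}\cdot t^2 = \frac{-nt}{t+n-1}$, which is strictly increasing in $t$ on $(0,1]$. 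Combined with the first paragraph's intertwining, equality of this invariant forces $t=t'$.

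The main obstacle I anticipate is the extraction of the invariant curvature purely from the metric space $C_t$: a priori $C_t$ is just a \cat{-1} space, and I need to locate the Riemannian submanifold $f_t(\HH)$ inside it intrinsically (so that an abstract isometry $C_t\to C_{t'}$ must carry it to $f_{t'}(\HH)$) and then read off its curvature. One clean way is to observe that $f_t(\HH)$ is exactly the set of points of $C_t$ fixed by the stabilizer in $\Isomn$ of a point of $\HH$ is too small — rather, $f_t(\HH)$ is the unique minimal $\Isomn$-invariant subset, or it can be characterized as the image of the (unique, by Lemma~\ref{lemma:unique:k}) equivariant map, hence is canonical; then since the $\Isomn$-action on $C_t$ is determined up to automorphism by $C_t$ alone (using $\mathrm{Isom}(C_t)=\Isomn$), the subspace $f_t(\HH)$ is canonically attached to $C_t$, and its curvature is an honest invariant. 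Making the "canonical" step rigorous — i.e.\ that any isometry $C_t\to C_{t'}$ is automatically $\alpha$-equivariant for some $\alpha\in\mathrm{Aut}(\Isomn)$, which uses $\mathrm{Isom}(C_t)=\Isomn$ — is where care is needed; alternatively one avoids the full isometry group and argues that any isometry between these cocompact \cat{-1} $\Isomn$-spaces induces a boundary map conjugating the boundary actions, and the boundary action of $\varrho_t$ on $\partial C_t=\bS^{n-1}$ recovers $\Isomn$ and hence $\alpha$. I would present the translation-length plus normalized-curvature computation as the core of the argument, and handle the equivariance-of-isometries point by citing the (already asserted) computation of $\mathrm{Isom}(C_t)$ together with the uniqueness of the equivariant map.
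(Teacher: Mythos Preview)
Your argument for the non-isometry statement is essentially the paper's: use $\Isom(C_t)=\Isomn$ (Proposition~\ref{prop:isom:ct}) to make any rescaled isometry $\Isomn$-equivariant up to an automorphism, read off $\lambda t=t'$ from translation lengths, then use the curvature of the canonical submanifold $f_t(\HH)$ (identified intrinsically via Lemma~\ref{lemma:unique:k}) to get a second equation and conclude $t=t'$. That part is fine.

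The gap is in your proof of local compactness. Even if one grants that $C_t$ lies in a bounded neighbourhood of $f_t(\HH)$, your deduction that closed balls in $C_t$ are compact does not follow: in the infinite-dimensional space $\HHI$, a $D'$-neighbourhood of a compact set is \emph{not} compact, so ``$B_{C_t}(p,R)$ is contained in $N_{D'}(\text{compact})\cap C_t$'' yields nothing. (Trivial illustration: a closed ball in $\HHI$ is within bounded Hausdorff distance of its centre yet fails to be compact.) Separately, the assertion that in a \cat{-1} space the closed convex hull of a quasi-convex set lies within bounded distance of it is not a fact you can just quote; the naive iteration of ``take geodesics between pairs'' gives a bound that grows with the number of iterations, and making this uniform is non-trivial.

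The paper handles both issues at once by a different idea: work in the Klein model, where $\HHI$ is the open unit ball of a Hilbert space and convex hulls are \emph{affine} convex hulls. The boundary map of Theorem~\ref{curv}\eqref{curv:bd} sends the compact set $\partial\HH$ to a norm-compact subset $f_t(\partial\HH)$ of the sphere. Mazur's theorem then says its closed affine convex hull is norm-compact; intersecting with the open ball gives a proper \cat{-1} space which is shown to coincide with $C_t$ (Lemma~\ref{lemma:all_C}). Only after local compactness is in hand does the paper prove cocompactness (Proposition~\ref{prop:coco}), and that proof itself uses properness to extract a subsequence converging to a point of $\partial C_t=f_t(\partial\HH)$. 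So the order you propose (cocompactness first, then local compactness) cannot be made to work along the lines you sketch; the missing ingredient is Mazur's compactness theorem in the linear model.
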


The following proposition describes $C_{t}$ more precisely.

\begin{iprop}\label{propertiesconvexhull} Let $t\in (0,1]$.
\begin{enumerate}
\item The set $C_{t}$ coincides with the closed convex hull of $f_{t}(\HH)$.
\item The set $C_{t}$ coincides with the closed convex hull of $f_{t}(\partial \HH)$ and $\partial C_{t}= f_{t}(\partial \HH)$.
\item $\Isom(C_t) = \Isomn$.
\end{enumerate}
\end{iprop}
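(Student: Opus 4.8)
The plan is to derive (1) and (2) from the defining minimality of $C_t$ together with the uniqueness of the equivariant map and the cocompactness in Theorem~\ref{convexhull}, and then to get (3) by identifying $\Isom(C_t)$ with the Möbius group of the boundary sphere. Since $C_1=\HH$ the case $t=1$ is classical, so I assume $t\in(0,1)$; then $\varrho_t$ is irreducible, hence non-elementary and faithful, and $C_t$ is the unique minimal non-empty closed convex $\Isomn$-invariant subset of $\HHI$.

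For (1): $\overline{\mathrm{conv}}(f_t(\HH))$ is a non-empty closed convex $\Isomn$-invariant set (by $\varrho_t$-equivariance of $f_t$), hence contains $C_t$ by minimality; conversely the nearest-point projection $\pi\colon\HHI\to C_t$ is $\Isomn$-equivariant because $C_t$ is invariant, so $\pi\circ f_t$ is an equivariant map $\HH\to\HHI$ and thus equals $f_t$ by Lemma~\ref{lemma:unique:k}, whence $f_t(\HH)\subseteq C_t$ and $\overline{\mathrm{conv}}(f_t(\HH))\subseteq C_t$. For (2): since $f_t(\HH)\subseteq C_t$ and $\Isomn$ acts cocompactly on $C_t$, the sets $f_t(\HH)$ and $C_t$ lie within finite Hausdorff distance and so have the same limit set in $\partial\HHI$; by the continuous extension of $f_t$ (Theorem~\ref{curv}(\ref{curv:bd})) this limit set is $f_t(\partial\HH)$, hence $\partial C_t=f_t(\partial\HH)$. (Here $f_t|_{\partial\HH}$ is injective: its fibres form an $\Isomn$-invariant partition of $\bS^{n-1}$, which is trivial since the action is $2$-transitive, and the one-block case is excluded because $\varrho_t$ is non-elementary.) Finally $\overline{\mathrm{conv}}(f_t(\partial\HH))$ is non-empty, closed, convex and invariant, hence contains $C_t$; and conversely every geodesic with endpoints in $\partial C_t$ is a limit of geodesics inside the closed convex set $C_t$, so $\overline{\mathrm{conv}}(\partial C_t)\subseteq C_t$. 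Hence $C_t=\overline{\mathrm{conv}}(f_t(\partial\HH))$.

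For (3), the inclusion $\Isomn\hookrightarrow\Isom(C_t)$ is clear from faithfulness of $\varrho_t$. For the converse, every $g\in\Isom(C_t)$ extends to a homeomorphism of $\overline{C_t}=C_t\cup\partial C_t$, hence --- via the identification $f_t\colon\bS^{n-1}\to\partial C_t$ of part (2) --- to a homeomorphism $\Phi(g)$ of $\bS^{n-1}$; moreover $g$ preserves Bourdon's boundary cross-ratio (as $C_t$ is \cat{-1}), and since $C_t$ is convex in $\HHI$ this cross-ratio is the restriction to $f_t(\partial\HH)$ of the cross-ratio of $\HHI$. The key step is to show that, transported through $f_t$, this restricted cross-ratio is exactly $t$ times the standard conformal cross-ratio of $\bS^{n-1}$. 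Theorem~\ref{curv}(\ref{curv:qi}) gives $d_{\HHI}(f_t x,f_t y)=t\,d_{\HH}(x,y)+O(1)$, hence $\langle f_t\xi,f_t\eta\rangle_{\HHI}=t\langle\xi,\eta\rangle_{\HH}+O(1)$ for Gromov products at infinity, so the discrepancy $c(\xi_1,\xi_2,\xi_3,\xi_4):=[f_t\xi_1,f_t\xi_2,f_t\xi_3,f_t\xi_4]_{\HHI}-t[\xi_1,\xi_2,\xi_3,\xi_4]_{\HH}$ is a bounded $\Isomn$-invariant function that inherits the antisymmetries and chain-rule identity of a cross-ratio. Putting two arguments at $0,\infty\in\bS^{n-1}$ and using the chain rule, $c(0,\infty,\xi,\eta)=\Phi_0(\xi)-\Phi_0(\eta)$ for a bounded $\Phi_0$; invariance under the subgroup of $\Isomn$ fixing $0$ and $\infty$ then forces $h\mapsto\Phi_0\circ h-\Phi_0$ to be a constant-valued homomorphism, which vanishes by boundedness, so $\Phi_0$ is invariant under that subgroup, which acts transitively on $\bS^{n-1}\setminus\{0,\infty\}$; therefore $\Phi_0$ is constant, $c(0,\infty,\cdot,\cdot)\equiv 0$, and $c\equiv 0$ by $2$-transitivity. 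Consequently $\Phi(\Isom(C_t))$ preserves the standard cross-ratio, so it consists of Möbius transformations and $\Phi(\Isom(C_t))\subseteq\mathbf{PO}(1,n)=\Isomn$; since $\Phi\circ\varrho_t$ is the standard action, this is an equality, and $\Phi$ is injective because an isometry of $C_t$ fixing $\partial C_t$ pointwise fixes every bi-infinite geodesic pointwise (a Busemann-function argument, $\partial C_t$ having at least three points) and hence all of $C_t$ by part (2). Thus $\Isom(C_t)=\varrho_t(\Isomn)\cong\Isomn$.

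I expect the main obstacle to be the vanishing of $c$: Theorem~\ref{curv} only pins the boundary cross-ratio down up to a bounded additive error, and it is precisely this upgrade to an exact equality that makes $\Isom(C_t)$ rigidly conformal rather than merely quasiconformal. The argument also uses two standard but non-trivial inputs --- that isometries of a \cat{-1} space preserve Bourdon's cross-ratio on the boundary, and that a self-homeomorphism of $\bS^{n-1}$ preserving the conformal cross-ratio is a Möbius transformation (Liouville-type rigidity) --- together with the basic theory of $\HHI$ from the references.
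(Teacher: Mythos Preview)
Your arguments for (1) and (2) are sound, but note a circularity with the paper's development: you invoke cocompactness (Theorem~\ref{convexhull}) to obtain $\partial C_t=f_t(\partial\HH)$, whereas the paper's proof of cocompactness (its Proposition~\ref{prop:coco}) already \emph{uses} $\partial C_t=f_t(\partial\HH)$. The paper's route for (2) is to work in the Klein model, observe that $f_t(\partial\HH)$ is norm-compact in the closed unit ball, apply Mazur's theorem so that its affine closed convex hull $C'$ is compact, read off $\partial C'=f_t(\partial\HH)$ directly, and then conclude $C'=C_t$ by minimality (Lemma~\ref{lemma:all_C}); cocompactness is deduced only afterwards. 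Your version is fine if Theorem~\ref{convexhull} is granted as a black box, but it cannot substitute for the paper's order of proof. (Your handling of (1) via Lemma~\ref{lemma:unique:k} and the nearest-point projection is essentially the content of the paper's Lemma~\ref{lemma:C_exists} read in the other direction: the paper \emph{defines} $C_t$ as $\overline{\mathrm{conv}}(f_t(\HH))$ and proves minimality, you start from minimality and recover the convex hull.)

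Your proof of (3) is correct and genuinely different. The paper argues structurally (Proposition~\ref{prop:isom:ct}): since $C_t$ is proper, $\Isom(C_t)$ is locally compact with $\Isomn$ closed and cocompact inside it; Yamabe's theorem (after killing a compact normal subgroup, trivial by minimality of $C_t$) makes $\Isom(C_t)$ a Lie group, its identity component is semisimple without compact factors, and $\Isomn^\circ$ being closed cocompact in it forces equality via Karpelevich--Mostow; a short $\Out(\Isomn^\circ)$ computation finishes. Your boundary approach trades this Lie-theoretic machinery for Bourdon's cross-ratio and Liouville rigidity on $\bS^{n-1}$; the substantive new step is upgrading the $O(1)$ estimate of Theorem~\ref{curv}(\ref{curv:qi}) to an exact cross-ratio identity, and your bounded-cocycle argument using the chain rule and the transitivity of $\mathrm{Stab}(0,\infty)$ on $\bS^{n-1}\setminus\{0,\infty\}$ does this cleanly. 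A small bonus: your argument for (3) does not use local compactness of $C_t$, whereas the paper's does.
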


\medskip

Next, we study the continuity with respect to $t$ of the metric space $C_{t}$. Recall that there is a natural topology called the {\it pointed Gromov--Hausdorff topology} on the class of all pointed metric spaces. We refer the reader to~\cite[chap.~3]{gromov0} or to Section~\ref{sec:GH} for its definition. If $G$ is a fixed topological group, a metric space endowed with a continuous isometric action of the group $G$ will be called, for short, a {\it metric $G$-space}. On the class of all pointed metric $G$-spaces, there is a natural refinement of the pointed Gromov--Hausdorff topology which takes into account the $G$-action (see for instance~\cite{fukaya}). We will define precisely a suitable version of this refined topology in Section~\ref{sec:GH} and call it the {\it strong topology} on pointed metric $G$-spaces (rather than the ``equivariant pointed Gromov--Hausdorff topology"\ldots).

The spaces $C_{t}$ will always be pointed at $f_t(o)$, where $o$ is a given point of $\HH$. 

\begin{imain}\label{continuous}
The map $t \mapsto C_t$ is continuous on $(0,1]$ for the strong topology on the class of pointed $\Isomn$-spaces. In particular, as $t\to 1$, $C_t$ converges to $\HH$ and the diameter of $C_t/\Isomn$ goes to zero.
\end{imain}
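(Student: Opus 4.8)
The plan is to exhibit, for each $t$ near a fixed $t_0 \in (0,1]$, an explicit $(1+\varepsilon(t))$-biLipschitzLike map between pointed neighborhoods of the basepoints that is moreover almost equivariant, with $\varepsilon(t) \to 0$ as $t \to t_0$; this is precisely what the strong topology of Section~\ref{sec:GH} requires. The natural candidate is to transport everything to $\HH$ via the harmonic equivariant maps $f_t$ of Theorem~\ref{curv}. First I would observe that, by Proposition~\ref{propertiesconvexhull}(1), $C_t$ is the closed convex hull of $f_t(\HH)$, so every point of $C_t$ is within bounded (indeed uniformly bounded, by item~(\ref{curv:qi})) distance of $f_t(\HH)$; hence for the purpose of pointed Gromov--Hausdorff estimates on balls of fixed radius $R$ around $f_t(o)$ it suffices to compare the pulled-back metrics $d_t := f_t^\ast d_{\HHI}$ on a fixed ball $B(o,R') \subset \HH$ (with $R'$ slightly larger than $R$). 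The composition $f_{t_0}^{-1} \circ f_t$ is not literally defined, but the correspondence $\{(f_t(x), f_{t_0}(x)) : x \in B(o,R')\}$ together with the bounded-distance fact gives a candidate correspondence realizing the strong convergence, and it is $\Isomn$-equivariant by construction since both $f_t$ and $f_{t_0}$ are equivariant.

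The heart of the matter is therefore to prove that $d_t \to d_{t_0}$ uniformly on compact subsets of $\HH \times \HH$. Here I would use the explicit model for $\varrho_t$ promised in the text (the one-parameter family from Theorem~\ref{class}(2), with $f_t$ the unique equivariant map of Lemma~\ref{lemma:unique:k}): the quantity $\cosh d_{\HHI}(f_t(x), f_t(y)) = B(f_t(x), f_t(y))$ should be a fully explicit function of $t$ and of the hyperbolic distance $d_{\HH}(x,y)$ — indeed, equivariance under the transitive stabilizer action forces $B(f_t(x), f_t(y))$ to depend only on $d_{\HH}(x,y)$, and item~(\ref{curv:curv}) of Theorem~\ref{curv} (the image is a space form of curvature $-n/(t(t+n-1))$ with $f_t^\ast g_{hyp} = \tfrac{t(t+n-1)}{n} g_{hyp}$) essentially pins this function down: on a rank-one space-form image the distance is determined by the induced Riemannian metric, so $d_{\HHI}(f_t(x),f_t(y))$ is a smooth, explicitly computable function $\Phi(t, d_{\HH}(x,y))$. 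Continuity of $\Phi$ in $t$, uniformly for the distance argument in a compact interval $[0, 2R']$, is then an elementary calculus statement, and it yields the uniform convergence $d_t \to d_{t_0}$ on compacta.

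With uniform convergence of the pulled-back distances in hand, I would assemble the strong-topology statement as follows: given $R > 0$ and $\varepsilon > 0$, choose $R' > R + D$ (with $D$ from Theorem~\ref{curv}(\ref{curv:qi})) and then $\delta > 0$ so that $|t - t_0| < \delta$ forces $\sup_{x,y \in B(o,R')} |d_t(x,y) - d_{t_0}(x,y)| < \varepsilon$. The correspondence between $B_{C_{t_0}}(f_{t_0}(o), R)$ and $B_{C_t}(f_t(o), R)$ obtained by first projecting a point of $C_{t_0}$ to a nearest point of $f_{t_0}(B(o,R'))$, pulling back to $\HH$, and pushing forward by $f_t$ (and symmetrically) is then $\varepsilon'$-distorted for a controlled $\varepsilon' = \varepsilon'(\varepsilon, D)$ with $\varepsilon' \to 0$, and it intertwines the $\Isomn$-actions up to the same error because $f_t, f_{t_0}$ are equivariant and nearest-point projection in a \cat{-1} space is equivariant and $1$-Lipschitz. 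This gives continuity of $t \mapsto C_t$ at $t_0$ for the strong topology. Finally, specializing to $t_0 = 1$, where $C_1 = \HH$ and $f_1 = \Id$, yields $C_t \to \HH$; and since $\Phi(t, \cdot)$ collapses — more precisely $f_t$ is $t$-almost-contracting and the convex hull stays within distance $D = D(t)$ with $D(t) \to 0$ as $t \to 1$ — the quotient diameter $\operatorname{diam}(C_t/\Isomn) \le 2D(t) \to 0$.

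\textbf{Main obstacle.} The one genuinely substantive point is the explicit control of $\Phi(t, r) = d_{\HHI}(f_t(x), f_t(y))$ with $r = d_{\HH}(x,y)$: one must verify that the equivariant map $f_t$ into the concrete model has image an honest metric space-form (not merely a submanifold with the stated induced metric) so that $\Phi$ is computable and visibly continuous in $t$, and one must check the uniform-on-compacta nature of this continuity. Everything else — the convex-hull/bounded-distance reductions, the equivariance of the correspondence, the passage to the quotient diameter — is soft and follows from Theorem~\ref{curv}, Proposition~\ref{propertiesconvexhull}, and standard \cat{-1} geometry.
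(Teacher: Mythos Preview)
Your approach has a genuine gap that breaks the argument at every $t_0<1$. The correspondence you build --- project $p\in C_{t_0}$ to the nearest point of $f_{t_0}(\HH)$, pull back to $\HH$, push forward by $f_t$ --- has distortion of order $D_{t_0}:=\corad_{C_{t_0}}(f_{t_0}(\HH))$, and this number is \emph{fixed and positive} for $t_0<1$. Concretely, if $p,q\in C_{t_0}$ project to $f_{t_0}(x),f_{t_0}(y)$, then $|d(p,q)-d(f_{t_0}(x),f_{t_0}(y))|$ can be as large as $2D_{t_0}$ regardless of how close $t$ is to $t_0$; so your $\varepsilon'(\varepsilon,D)$ does \emph{not} tend to~$0$ as $t\to t_0$. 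Comparing the images $f_t(\HH)$ is simply not enough: you must compare the whole convex hulls, and your projection step throws away exactly the information needed to do that. (A secondary issue: your justification of the continuity of $\Phi(t,r)$ via ``the image is a space form'' confuses the intrinsic Riemannian distance on $f_t(\HH)$ with the ambient $\HHI$-distance, which are different since $f_t(\HH)$ is not totally geodesic; the fact itself is true, but it comes from the explicit integral formula~(\ref{eq:Iu}).)

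The paper circumvents this by never projecting to $f_t(\HH)$. Instead it places all the $C_t$ inside one fixed $\HHI$ (after a renormalisation $\pinew_s$ so that the family extends continuously to $t=1$), works in the Klein model where $\overline{C_t}$ is the \emph{affine} closed convex hull of the compact set $i_t(\partial\HH)$, proves that $t\mapsto i_t(\partial\HH)$ is Hausdorff-continuous (via continuity of attracting fixed points of hyperbolic isometries with constant translation length), and then invokes the elementary fact that the closed-convex-hull operation is $1$-Lipschitz for the Hausdorff distance in Busemann-nonpositively-curved spaces. This compares the entire $C_t$ to the entire $C_{t_0}$ directly. The statement that $\mathrm{diam}(C_t/\Isomn)\to 0$ is handled separately, by an upper-semicontinuity argument for the coradius of an orbit under proper actions; your assertion that $D(t)\to 0$ is exactly this statement and needs its own proof.
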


Concerning the behavior of the spaces $C_{t}$ as $t$ goes to $0$, we refer the reader to Section~\ref{renor} for a discussion.

\bigskip

The article is organized as follows. In Section~\ref{unique}, we prove completely Theorem~\ref{class} except for the existence of the family $\varrho_{t}$.  In Section~\ref{princeseries}, we recall some classical facts concerning the spherical principal series of representations of $\Isomn$ and study the actions of $\Isomn$ on $\HHI$ which arise from the spherical principal series. We then complete the proof of Theorem~\ref{class} and prove Theorem~\ref{curv}. In Section~\ref{sec:Ct}, we study the spaces $C_{t}$: we prove Theorem~\ref{convexhull}, Proposition~\ref{propertiesconvexhull} as well as Theorem~\ref{continuous}. Finally, Section~\ref{further} contains further results: Section~\ref{furthertree} deals with actions of automorphism groups of trees on $\HHI$, already studied in~\cite{bim} and Section~\ref{higherrank} starts the study of actions of $\Isomn$ on certain higher rank infinite-dimensional symmetric spaces which also arise from the spherical principal series.  

\medskip

\noindent {\bf Acknowledgments.} We would like to thank Serge Cantat, Thomas Delzant and Robert Stanton for several useful conversations during the preparation of this work.  We are very grateful to the anonymous referee for many useful remarks which improved the presentation of the text.


\section{Classification of non-elementary actions}\label{unique}

In this section, we start the classification of non-elementary continuous actions of the group $\Isomn$ on $\HHI$. Here {\it continuous} means that all orbit maps $\Isomn \to \HHI$ are continuous. The strategy of the proof is similar to the one in~\cite{bim} where an analogous result is proved for actions of the automorphism group of a regular tree on $\HHI$: we first prove that the action is determined by its restriction to a parabolic subgroup and then study the action of parabolic subgroups. The second step of the proof differs from the case of trees since parabolic subgroups have distinct structures in $\Isomn$ and in the automorphism group of a tree. 

\medskip
We recall that isometries of general complete \cat0 spaces are subdivided in three types according to the behavior of their displacement function:  elliptic, parabolic or hyperbolic; see~\cite[II.6]{bh}. In the particular case of $\HH$ and $\HHI$, the stronger \cat{-1} condition implies that hyperbolic isometries admit a unique axis and that parabolic isometries have a unique fixed point at infinity; see~\cite[\S4]{bim} (the existence of the fixed point is not immediate for non-proper spaces).

\subsection{Restriction to a parabolic subgroup}\label{parabolic}

We start this section by introducing some notation. We consider the vector space $\R^{n+1}$ endowed with the bilinear form $B$ defined by:
$$B(x,x)=x_{1}^{2}-x_{2}^{2}-\cdots -x_{n+1}^{2}.$$

\noindent As usual, the hyperbolic space $\HH$ is described as:
$$\HH:=\{x\in \R^{n+1}, B(x,x)=1, x_{1}>0\};$$
the group $\Isomn$ is the group of linear maps $g \colon  \R^{n+1} \to \R^{n+1}$ which preserve $B$ and satisfy $g(\HH)=\HH$. If $\xi$ is a $B$-isotropic vector in $\R^{n+1}$, we denote by $[\xi]$ the corresponding point of $\partial \HH$.

We denote by $e_{1}, \ldots e_{n+1}$ the canonical basis of $\R^{n+1}$ and by $\xi_{1}=\frac{1}{\sqrt{2}}(1,1,0,\ldots)$ and $\xi_{2}=\frac{1}{\sqrt{2}}(1,-1,0,\ldots)$. The vectors $\xi_{1}$ and $\xi_{2}$ are $B$-isotropic. In what follows we will rather use the basis $(\xi_{1}, \xi_{2}, e_{3}, \ldots)$. In particular all the matrices we write below are decomposed into blocks corresponding to the decomposition
$$\R^{n+1}=\R\xi_{1} \oplus \R \xi_{2} \oplus U,$$

\noindent where $U$ is the vector space generated by the family $(e_{i})_{3\le i \le n+1}$, which we will identify with $\R^{n-1}$. Hence they have the following form: 
$$\left( \begin{array}{ccc}
\alpha & \beta & l_{1}\\
\gamma & \delta & l_{2}\\
v_{1} & v_{2} & M \\
\end{array}\right),$$ \noindent where $\alpha, \beta, \gamma, \delta$ are real numbers, $v_{1}$ and $v_{2}$ are vectors in $\R^{n-1}$, $l_{1}$ and $l_{2}$ are linear forms on $\R^{n-1}$ and $M$ is an endomorphism of $\R^{n-1}$.

If $\lambda \in \R^{\ast}_{+}$, $A\in {\rm O}(n-1)$ and $v\in \R^{n-1}$, let $g_{\lambda,v,A}$ be the element of ${\rm Isom}(\mathbf{H}^{n})$ whose matrix has the following form: 

$$\left( \begin{array}{ccc}
\lambda & \frac{1}{2}\lambda \vert v \vert^{2}& \lambda \langle v, A(\cdot) \rangle\\
0 & \lambda^{-1} & 0 \\
0 & v & A\\
\end{array}\right).$$

\noindent Here $\langle \cdot , \cdot \rangle$ stands for the standard inner product on $\R^{n-1}$. The group $P=\{g_{\lambda,v,A}\}$ is the stabilizer of the line $\R \xi_{1}$ in $\Isomn$. It is isomorphic to the group ${\rm Sim}(\R^{n-1})$ of similarities of $\R^{n-1}$: the map which takes $g_{\lambda,v,A}$ to the similarity
$$x\longmapsto \lambda Ax+\lambda v$$

\noindent is an isomorphism from $P$ to ${\rm Sim}(\R^{n-1})$.

Note that the isometry $g_{\lambda,v,A}$ is parabolic if and only if $\lambda=1$ and $v\notin {\rm Im}(A-\Id)$. Hence a parabolic element $g_{1,v,A}$ is always conjugated inside $P$ to an element of the form 
\begin{equation}\label{matriceparab}
\left(\begin{array}{cccc}
1 & \frac{1}{2}\vert v'\vert^{2} & \langle v',\cdot \rangle & 0\\
0 & 1 & 0 & 0\\
0 & v' & \Id & 0\\
0 & 0 & 0  & A'\\ 
\end{array}\right),
\end{equation}

\noindent with $A'-\Id$ invertible and $v'\neq 0$. It follows that if $g_{1,v,A}$ is parabolic, there is always another parabolic element $h\in P$ which commutes with $g_{1,v,A}$ and which is conjugated in $P$ to its own square: if $g_{1,v,A}$ is equal (up to conjugacy) to the matrix in equation~\eqref{matriceparab}, one can take
\begin{equation}
h=\left(\begin{array}{cccc}
1 & \frac{1}{2}\vert v'\vert^{2} & \langle v',\cdot \rangle & 0\\
0 & 1 & 0 & 0\\
0 & v' & \Id & 0\\
0 & 0 & 0  & \Id\\ 
\end{array}\right)
\end{equation}
\noindent
and the conjugating element is the diagonal matrix with entries $(2, 1/2, \Id, \Id)$.

Finally, we will denote by $\sigma$ the involution represented by the matrix 
\begin{equation}\label{involution}
\sigma=\left( \begin{array}{ccc}
0 & 1 & 0\\
1 & 0 & 0\\
0 & 0 & J_{1}\\
\end{array}\right),
\end{equation}
where $J_{1}$ is the diagonal matrix with coefficients $(-1,1, \ldots )$. The group $\Isomn$ is generated by $\sigma$ and $P$. 

\bigskip
From now on, we fix a continuous non-elementary representation
$$\varrho \colon  \Isomn \lra\Isomi.$$ 
Let $x_{0}$ be a point in $\HHI$. Since $\varrho(\Isomn)$ does not fix any point of $\HHI$ and thanks to a classical argument going back to Cartan (see~\cite[II.2]{bh}), the function
$$ \varphi_{\varrho,x_{0}}\colon G\longrightarrow \R_+, \kern10mm \varphi_{\varrho,x_{0}}(g)=d_{\HHI}(\varrho(g)x_{0},x_{0})$$
\noindent has to be unbounded. It is then a well-known result that this implies that the function $\varphi_{\varrho, x_{0}}$ is \emph{proper}. The reader will find a proof of this fact in~\cite{decor} in a more general context. (For Hilbert spaces, the properness of the displacement function for actions of simple Lie groups of rank~$1$ was proved by Shalom~\cite{shalom}.) The properness of $\varphi_{\varrho, x_{0}}$ enters the proof of the following:

\begin{prop}\label{type}
The action $\varrho$ preserves the type, i.e. the image $\varrho(g)$ of an elliptic element $g$ of $\Isomn$ (resp. parabolic, hyperbolic) is an elliptic isometry of $\HHI$ (resp. parabolic, hyperbolic).  Moreover, $\varrho(P)$ has a unique fixed point in $\partial \HHI$.
\end{prop}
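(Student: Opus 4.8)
The plan is to exploit the properness of $\varphi_{\varrho, x_{0}}$, which has just been recalled. The key point is that properness gives a clean characterization of each isometry type for elements in the image of $\varrho$, in terms of the behaviour of orbit maps, and these characterizations are invariant under taking powers and commuting translates.

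First I would recall the standard trichotomy: for an isometry $h$ of the \cat{-1} space $\HHI$, $h$ is elliptic iff it has a fixed point in $\HHI$ iff the orbit $\{h^{k}x_0\}_{k\in\Z}$ is bounded; $h$ is hyperbolic iff $\ell_{\varrho}$-type translation length $\inf_x d_{\HHI}(hx,x)$ is positive and attained; and $h$ is parabolic in the remaining case, equivalently when $d_{\HHI}(h^{k}x_0,x_0)\to\infty$ while $\frac{1}{k}d_{\HHI}(h^{k}x_0,x_0)\to 0$. The subtlety in infinite dimensions is that an isometry with zero translation length need not be elliptic (no local compactness to force the infimum to be attained); but the dichotomy ``bounded orbit'' versus ``unbounded orbit'' together with the linear-versus-sublinear growth rate still separates the three types cleanly, and this is exactly what appears in~\cite{bim}.

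Now for the three cases. If $g\in\Isomn$ is elliptic, it lies in a compact subgroup $K$ of $\Isomn$; then $\varphi_{\varrho,x_0}$ is bounded on $K$ since $\varrho$ is continuous and $K$ compact, so in particular the $\langle g\rangle$-orbit of $x_0$ is bounded and $\varrho(g)$ is elliptic. If $g$ is hyperbolic, it is conjugate to its own powers only trivially, but more to the point: $g$ has positive translation length $\ell_{\HH}(g)>0$, and by properness of $\varphi_{\varrho,x_0}$ the orbit $\{\varrho(g)^{k}x_0\}$ is unbounded (since $g^k$ leaves every compact set), so $\varrho(g)$ is not elliptic; to rule out parabolic, I would use that $g$ is conjugate in $\Isomn$ to $g^{m}$ up to something — actually the cleanest route is the one the paper sets up for parabolics: a hyperbolic $g$ commutes with a one-parameter group whose other elements are hyperbolic with prescribed translation lengths, forcing linear growth. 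Alternatively one invokes~\cite[Theorem 1.5 or similar]{bim} directly: an isometry of $\HHI$ whose centralizer contains elements conjugate to its nontrivial powers cannot be parabolic-with-sublinear growth unless it is elliptic. For $g$ parabolic, the key input already highlighted in the excerpt is: \emph{every parabolic $g\in P$ commutes with another parabolic $h\in P$ that is conjugate in $P$ to $h^{2}$}. Then $\varrho(h)$ is conjugate to $\varrho(h)^{2}$, so $\ell_{\varrho}(\varrho(h))=\ell_{\varrho}(\varrho(h)^{2})=2\ell_{\varrho}(\varrho(h))$, forcing $\ell_{\varrho}(\varrho(h))=0$; hence $\varrho(h)$ is elliptic or parabolic. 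It cannot be elliptic: if it were, its fixed-point set in $\HHI$ would be a nonempty closed totally geodesic subspace, invariant under the centralizer of $h$, and pushing this through one contradicts non-elementarity together with properness (an elliptic element of a parabolic subgroup that commutes with a genuine parabolic... ) — concretely, $h$ and $g$ being commuting parabolics of $\Isomn$ sharing a fixed point at infinity, if $\varrho(h)$ fixed a point in $\HHI$ then so would a neighborhood's worth of conjugates, contradicting properness on the subgroup they generate. So $\varrho(h)$ is parabolic; and then since any parabolic $g\in\Isomn$ is, up to conjugacy in $\Isomn$, simultaneously controlled by such an $h$ — in fact every parabolic $g$ has a nontrivial power that is a product/limit involving $h$ — one concludes $\varrho(g)$ is parabolic too. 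Finally, since $\Isomn$ is generated by $P$ and $\sigma$, and every element is conjugate into a form covered above, type preservation on all of $\Isomn$ follows; and conversely $\varrho(g)$ elliptic forces $g$ elliptic (else $g$ is parabolic or hyperbolic, handled above, giving a contradiction), so the correspondence is exact.

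The main obstacle I anticipate is precisely the parabolic case: showing $\varrho(h)$ is \emph{not} elliptic. Zero translation length in infinite dimensions does not by itself force parabolicity, so one genuinely needs to rule out a fixed point in $\HHI$, and the argument must use non-elementarity of $\varrho$ in an essential way — presumably by observing that the fixed-point set of an elliptic $\varrho(h)$ would be a proper invariant totally geodesic subspace for a large enough subgroup (the normalizer of $\langle h\rangle$ inside $P$, which already acts with unbounded orbits by properness), contradicting either minimality of $\HHI_{\varrho}$ or properness itself. Making this last contradiction airtight — identifying exactly which subgroup acts and why its orbit on the fixed set stays bounded, violating properness — is the delicate step; everything else is bookkeeping with the explicit matrices in $P$ and the growth-rate trichotomy.
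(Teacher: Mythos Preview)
Your proposal has the right ingredients but a genuine gap in the parabolic case, and the hyperbolic case is left too vague.

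For the parabolic case, you identify the ``delicate step'' as ruling out that $\varrho(h)$ is elliptic, and you propose a workaround via fixed-point sets and normalizers. But this difficulty is illusory: you already used properness of $\varphi_{\varrho,x_0}$ to show that a \emph{hyperbolic} $g$ has $\varrho(g)$ non-elliptic (since $g^k$ leaves every compactum). Exactly the same argument applies to any \emph{parabolic} $h\in\Isomn$: the sequence $h^k$ also leaves every compact set of $\Isomn$, so $d_{\HHI}(\varrho(h)^k x_0, x_0)\to\infty$ and $\varrho(h)$ cannot be elliptic. This is precisely what the paper does: properness immediately forces the image of \emph{any} non-elliptic element to be non-elliptic. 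Once you have that, the conjugate-to-its-square trick shows $\varrho(h)$ is not hyperbolic, hence parabolic; and since $\varrho(g)$ and $\varrho(h)$ are commuting non-elliptic isometries of a \cat{-1} space, they share the same type (a hyperbolic isometry commuting with a parabolic one would force the parabolic to preserve an axis, which is impossible). Your fixed-point-set argument is unnecessary, and as you suspected, making it airtight would be more work than the problem requires.

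For the hyperbolic case, your sketch does not actually rule out $\varrho(g)$ being parabolic. The paper's argument is quite specific and uses non-elementarity in an essential way: after the parabolic case one already knows $\varrho(P)$ has a unique fixed point $[\eta_1]$ at infinity. If some $\varrho(g_{e^{ct},0,A_t})$ were parabolic, the whole $1$-parameter group would fix only $[\eta_1]$; in particular the group $\{g_{e^t,0,\Id}\}$ would have $[\eta_1]$ as its unique fixed point. But $\sigma$ normalizes this $1$-parameter group, so $\varrho(\sigma)$ would fix $[\eta_1]$, and hence all of $\varrho(\Isomn)=\langle \varrho(P),\varrho(\sigma)\rangle$ would fix $[\eta_1]$, contradicting non-elementarity. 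Your gestures toward ``linear growth'' or ``centralizer contains elements conjugate to nontrivial powers'' do not obviously produce this contradiction; you should either supply a concrete growth argument or follow the paper's route through $\sigma$.
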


\begin{proof}
Since elliptic elements of $\Isomn$ are contained in a compact group, their image under $\varrho$ has to be elliptic. Since the function $\varphi_{\varrho, x_{0}}$ is proper, the image by $\varrho$ of a non-elliptic element in $\Isomn$ has to be non-elliptic. We start with the case where $g\in \Isomn$ is parabolic; as noted above, there exists another parabolic element $h$ which commutes with $g$ and which is conjugated to its own square. Suppose for a contradiction that $\varrho(g)$ is hyperbolic; then $h$ must preserve the unique axis of $g$. This shows that $\varrho(h)$ is non-parabolic; being non-elliptic, it must be hyperbolic, contradicting the fact that it is conjugated to its square.

From this, one already sees that the group $\varrho(P)$ has a unique fixed point $[\eta_{1}]$ in the boundary of $\HHI$. Indeed the elements $\varrho(g_{1,v,\Id})$ for $v\in \R^{n-1}$ are commuting parabolic isometries, hence have a common unique fixed point $[\eta_{1}]$. Since the group formed by the $g_{1,v,\Id}$ is normal in $P$, $[\eta_{1}]$ must be fixed by all of $\varrho(P)$. 

Finally, let $g\in \Isomn$ be hyperbolic. Up to conjugacy and up to replacing $g$ by a power, we can assume that $g$ lies on a $1$-parameter subgroup of the form $g_{t}=g_{e^{ct},0,A_{t}}$. If $g$ were parabolic, the entire $1$-parameter subgroup $g_{t}$ would have a unique fixed point $\xi$ in $\partial \HHI$. Note that the point $\xi$ would coincide with the fixed point at infinity $[\eta_{1}]$ for the whole group $\varrho(P)$. In particular the $1$-parameter group $g_{e^{t},0,\Id}$ would admit $\xi$ as a unique fixed point. Since the involution $\sigma$ normalizes the latter group; $\xi$ would be fixed by $\sigma$. Finally, being fixed by $\sigma$ and $P$, $\xi$ would be fixed by all of $\Isomn$. This is a contradiction since $\varrho$ is non-elementary.
\end{proof}

We now continue our study of the action of $P$ and still denote by $[\eta_{1}]$ the unique fixed point of $\varrho(P)$ in $\partial \HHI$. Since $\varrho(P)$ does not preserve the horospheres centered at $[\eta_{1}]$ (it contains hyperbolic isometries), it follows from Proposition~4.3 in~\cite{bim} that there exists a unique closed totally geodesic subspace $\HHI_{P}$ which is $P$-invariant and minimal with this property. We can actually describe this space. Let $\eta_{2}$ be an isotropic vector such that the lines $[\eta_{1}]$ and $[\eta_{2}]$ are the two fixed points of the hyperbolic isometries $\varrho(g_{\lambda,0,\Id})$. Assume that $B(\eta_{1},\eta_{2})=1$. Let $E=\eta_{1}^{\perp}\cap \eta_{2}^{\perp}$. We will represent the isometries $\varrho(g)$ for $g\in P$ by matrices associated to the decomposition $\mathscr{H}=\R\eta_{1}\oplus \R \eta_{2}\oplus E$. We also denote by $\langle\cdot , \cdot \rangle$ the scalar product induced by $-B$ on $E$. One can then write:
$$\varrho(g_{\lambda,v,A})=\left(\begin{array}{ccc}
\chi(\lambda) & \frac{1}{2}\chi(\lambda)\vert c(\lambda,v,A)\vert^{2} & \chi(\lambda)\langle c(\lambda,v,A),\pi(\lambda,v,A)(\cdot)\rangle \\
0 & \chi(\lambda^{-1}) & 0\\
0 & c(\lambda,v,A) & \pi(\lambda,v,A)\\ 
\end{array}\right)$$
\noindent where $\chi \colon  \R_{+}^{\ast}\to \R^{\ast}_{+}$ is a continuous homomorphism, $\pi$ is an orthogonal representation of $P$ in $E$ and the continuous map
$$c \colon  P \lra E$$
\noindent
satisfies that $u(\lambda,v,A):=\chi(\lambda)c(\lambda,v,A)$ is a cocycle for the linear representation $\chi \otimes \pi$. Explicitly, this means the following: keeping in mind that
$$g_{\lambda,v,A}\cdot g_{\lambda',v',A'} = g_{\lambda\lambda' ,\lambda'^{-1} v + Av',A A'},$$
we have
\begin{equation}\label{eq:cocycle}
c(\lambda\lambda' ,\lambda'^{-1} v + Av',A A') = \chi(\lambda'^{-1}) c(\lambda,v,A) + \pi(\lambda,v,A) c(\lambda',v',A').
\end{equation}

\medskip
Let
$$V:=\overline{Span\{c(g), g\in P\}}$$
be the closed vector space spanned by the vectors $c(g)$ for $g\in P$. Then it is easy to see that $\R\eta_{1}\oplus \R\eta_{2}\oplus V$ is $\varrho(P)$-invariant and minimal with this property. The space $\HHI_{P}$ is thus the intersection of $\R\eta_{1}\oplus \R\eta_{2}\oplus V$ with $\HHI$. The map $c$ enjoys more properties. The fact that $g_{\lambda,0,\Id}$ and $g_{\mu,0,A}$ commute implies that $\varrho(g_{\mu,0,A})$ preserves the axis of $\varrho(g_{\lambda,0,\Id})$, we thus have:
\begin{equation}\label{eq:c_vanish}
c(\mu,0,A)=0.
\end{equation}
A simple computation with~\eqref{eq:cocycle} shows that this implies
\begin{equation}\label{aaaaa}
\chi (\lambda) c(\lambda,v,A)=c(1,\lambda v, \Id).
\end{equation} 
Hence we conclude:

\begin{lemma}\label{lem:cyclic}
The space $V$ is the closure of the vector space generated by the $c(1,v,\Id)$ for $v\in \R^{n-1}$.\qed
\end{lemma}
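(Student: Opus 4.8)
The plan is to observe that Lemma~\ref{lem:cyclic} is an immediate consequence of the relation~\eqref{aaaaa}. Rewriting that identity as
$$c(\lambda,v,A)=\chi(\lambda)^{-1}\,c(1,\lambda v,\Id),$$
we see that every vector $c(g)$ with $g=g_{\lambda,v,A}\in P$ is a scalar multiple of a vector of the form $c(1,w,\Id)$, namely with $w=\lambda v\in\R^{n-1}$. In particular $Span\{c(g):g\in P\}$ is contained in $Span\{c(1,v,\Id):v\in\R^{n-1}\}$; the reverse inclusion is trivial (take $\lambda=1$, $A=\Id$). Hence the two linear spans coincide as subspaces of $E$, and therefore so do their closures. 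Since $V$ was defined as the closure of the former, this is exactly the assertion of the lemma.

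There is essentially no obstacle at this stage: the substantive input is the identity~\eqref{aaaaa} itself, which the excerpt has already derived from the vanishing $c(\mu,0,A)=0$ (coming from the fact that $\varrho(g_{\mu,0,A})$ preserves the axis of $\varrho(g_{\lambda,0,\Id})$), together with the commutation relation $g_{\lambda,0,\Id}\,g_{1,v,\Id}\,g_{\lambda,0,\Id}^{-1}=g_{1,\lambda v,\Id}$ in $P$ and the cocycle identity for $u=\chi\otimes c$. Granting that identity, the only thing to check is the elementary set-theoretic equality of the two spans recorded above, after which closure of the two sides finishes the proof.
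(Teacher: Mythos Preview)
Your argument is correct and is exactly the one the paper has in mind: the lemma is stated with a \qed\ immediately after the identity~\eqref{aaaaa}, and your derivation from that identity is precisely the intended one-line justification.
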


\begin{prop}
One has $\chi(\lambda)=\lambda^{t}$ for some $0< t  \le 1$. Moreover, if $ t =1$, then $P$ preserves an $n$-dimensional totally geodesic subspace of $\HHI$. 
\end{prop}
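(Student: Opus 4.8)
The plan is to exploit the cocycle structure recorded above. We know $u(\lambda,v,A)=\chi(\lambda)c(\lambda,v,A)$ is a cocycle for $\chi\otimes\pi$, and by Lemma~\ref{lem:cyclic} together with \eqref{aaaaa} the whole geometry of $\varrho|_P$ is governed by the one-parameter family $v\mapsto c(1,v,\Id)$ on $\R^{n-1}$. First I would write down the cocycle identity restricted to the abelian unipotent subgroup $N=\{g_{1,v,\Id}\}$ and to the diagonal subgroup $A=\{g_{\lambda,0,\Id}\}$. On $N$ the representation $\chi\otimes\pi$ is trivial in the relevant direction (since $c(\mu,0,\Id)=0$ forces $\pi(g_{\mu,0,\Id})$ to be relatively simple there), so the cocycle relation says $v\mapsto c(1,v,\Id)$ is \emph{additive}: $c(1,v+w,\Id)=c(1,v,\Id)+c(1,w,\Id)$ up to the action, and combined with continuity this makes it an $\R$-linear map $\iota\colon\R^{n-1}\to E$. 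Then \eqref{aaaaa} reads $\chi(\lambda)\,c(\lambda,v,A)=\iota(\lambda v)=\lambda\,\iota(v)$.

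Next I would use the conjugation relation between $A$ and $N$ inside $P$: $g_{\lambda,0,\Id}\,g_{1,v,\Id}\,g_{\lambda,0,\Id}^{-1}=g_{1,\lambda v,\Id}$. Applying $\varrho$ and reading off the $(1,3)$ and $(3,3)$ blocks of the matrix form of $\varrho(g_{\lambda,v,A})$ given above, the cocycle transformation law becomes
\begin{equation}\label{eq:scaling}
\pi(g_{\lambda,0,\Id})\,\iota(v)\ =\ \frac{\lambda}{\chi(\lambda)}\,\iota(\lambda v)\ =\ \frac{\lambda^{2}}{\chi(\lambda)}\,\iota(v).
\end{equation}
So on the image of $\iota$ — equivalently on $V$ by Lemma~\ref{lem:cyclic} — the orthogonal operator $\pi(g_{\lambda,0,\Id})$ acts as the scalar $\lambda^{2}/\chi(\lambda)$. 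Since $\pi$ is orthogonal this scalar must have modulus $1$, hence is $\pm1$; continuity in $\lambda$ and the value at $\lambda=1$ force it to be identically $1$, giving $\chi(\lambda)=\lambda^{2}$ on the nose — wait, that is not quite $\lambda^t$. The point I actually need is subtler: $\pi(g_{\lambda,0,\Id})$ need not act by a \emph{single} scalar on all of $V$; rather $V$ decomposes under the commuting action of $A$ (and of $\mathrm{O}(n-1)$ acting through $A$ on the $v$-variable) into weight spaces, and on each weight space \eqref{eq:scaling} pins down $\chi(\lambda)$ to be of the form $\lambda^{s}$ with $s$ determined by that weight. The orthogonality of $\pi$ (boundedness of $\pi(g_{\lambda,0,\Id})$ as $\lambda\to0$ and $\lambda\to\infty$) forces the relevant exponent to satisfy $0<s$ and, comparing with the action on the $\R\eta_1\oplus\R\eta_2$ block where $\chi(\lambda),\chi(\lambda^{-1})$ appear, one gets $\chi(\lambda)=\lambda^{t}$ with $0<t$; the bound $t\le1$ comes from positivity of $B$ restricted to $E$, i.e.\ from the requirement that the explicit matrix actually preserves the form of index $1$, which upon expanding $\vert c\vert^2$ forces $t\le 1$ (equality being the degenerate/standard case).

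For the second assertion, suppose $t=1$. Then $\chi(\lambda)=\lambda$, so \eqref{eq:scaling} gives $\pi(g_{\lambda,0,\Id})\iota(v)=\lambda\,\iota(v)$; but $\pi(g_{\lambda,0,\Id})$ is orthogonal, so this forces $\iota(v)=0$ for all $v$ with $\lambda\neq1$... that cannot be right either, so instead when $t=1$ the correct reading is that $V$ is finite-dimensional: the weight constraint collapses and $\iota\colon\R^{n-1}\to V$ is forced to be (a scalar multiple of) an isometric isomorphism onto $V$, so $\dim V=n-1$ and $\HHI_P=(\R\eta_1\oplus\R\eta_2\oplus V)\cap\HHI$ is an $n$-dimensional totally geodesic subspace, visibly $\varrho(P)$-invariant. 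Concretely one checks the matrix of $\varrho(g_{\lambda,v,A})$ on this $(n+1)$-dimensional space is exactly the standard $g_{\lambda,v,A'}$ for $A'$ the orthogonal part of $\pi$, so $\varrho|_P$ is conjugate to the standard parabolic action.

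The main obstacle I anticipate is the decomposition step: controlling how $\pi(g_{\lambda,0,\Id})$ acts on the possibly infinite-dimensional $V$, i.e.\ producing the weight-space decomposition and ruling out continuous (non-closed, ``unipotent-like'') behaviour in $\pi|_A$ using only orthogonality and continuity. Once one knows $\pi(g_{\lambda,0,\Id})$ is diagonalisable with a spectral decomposition $V=\int_{\mathbf R} V_s$, the exponent bookkeeping via \eqref{eq:scaling} and the index-$1$ positivity constraint is routine. I would handle this either by invoking that a continuous one-parameter orthogonal group has a spectral decomposition into eigenspaces for characters $\lambda\mapsto\lambda^{is}$ (Stone), transported through $\pi\otimes\chi$, or — more in the spirit of \cite{bim} — by directly analysing the cocycle $u$ and using that its essential range is dictated by the spherical principal series, deferring the sharp identification to Section~\ref{princeseries}.
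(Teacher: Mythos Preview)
There is a genuine gap at the very first step. You assert that the cocycle relation on $N=\{g_{1,v,\Id}\}$ makes $v\mapsto c(1,v,\Id)=:f(v)$ an $\R$-linear map $\iota$. It does not: the cocycle identity for $u=\chi c$ restricted to $N$ gives only the \emph{twisted} relation
\[
f(v+w)=f(v)+\pi(g_{1,v,\Id})\bigl(f(w)\bigr),
\]
and there is no reason for $\pi(g_{1,v,\Id})$ to act trivially on the image of $f$. In fact for $t<1$ the map $f$ is \emph{not} linear and $V$ is infinite-dimensional; linearity of $f$ is precisely the content of the case $t=1$, not an input. Everything you build on the linearity of $\iota$ (the scalar action in your \eqref{eq:scaling}, the weight-space discussion, the conclusion $\chi(\lambda)=\lambda^2$) therefore collapses.

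The paper's argument avoids this by working only with norms. From $f(\lambda v)=\chi(\lambda)\,\pi(g_{\lambda,0,\Id})f(v)$ and orthogonality of $\pi$ one gets $|f(nv)|=n^{t}|f(v)|$; from the twisted additivity and the triangle inequality one gets $|f(nv)|\le n\,|f(v)|$. Comparing these yields $t\le 1$ directly, with no spectral decomposition needed. The bound $t>0$ comes from properness of the displacement function: $\{g_{1,e^{s}v,\Id}\}_{s\le 0}$ is relatively compact in $P$, so $|f(e^{s}v)|=e^{st}|f(v)|$ must stay bounded as $s\to-\infty$. For $t=1$, equality in the triangle inequality forces $\pi(g_{1,v,\Id})f(v)=f(v)$, and a short polarisation argument (expanding $S_{v_1+v_2}f(v_1+v_2)=f(v_1+v_2)$ using the twisted additivity) upgrades this to $\pi(g_{1,w,\Id})f(v)=f(v)$ for all $v,w$, whence $f$ is genuinely linear and $\dim V=n-1$ by injectivity (type preservation). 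Your sketch for $t=1$ gestures at the right conclusion but rests on the same false linearity assumption and does not supply the polarisation step.
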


\begin{proof}
Consider the continuous map $f\colon \R^{n-1}\to E$ defined by $f(v):=c(1,v,\Id)$. Notice that $f(v)\neq 0$ when $v\neq 0$ since $c$ is non-zero for parabolic isometries. For any $\lambda>0$ we have
$$g_{1, \lambda v, \Id} = g_{\lambda, 0, \Id}\cdot g_{1, v, \Id}\cdot g_{\lambda^{-1}, 0, \Id}$$
and hence, using twice~\eqref{eq:cocycle} and~\eqref{eq:c_vanish} we conclude
\begin{equation}\label{eq:scale}
f(\lambda v) = \chi(\lambda) \pi(g_{\lambda, 0, \Id}) f(v), \kern5mm\text{hence}\kern3mm \vert f(\lambda v) \vert = \chi(\lambda) \vert f(v)  \vert.
\end{equation}
Let us write $\chi(\lambda)=\lambda^{t}$ for some non-zero real number $t$. Then~\eqref{eq:scale} together with the continuity of $f$ at the origin already implies $t>0$. On the other hand, the relation~\eqref{eq:cocycle} also gives
\begin{equation}\label{ad} 
f(v+w)=f(v)+\pi(g_{1,v,\Id}) f(w).
\end{equation}
The triangle inequality implies $\vert f(2 v)\vert \le  2  \vert f(v)\vert$; together with~\eqref{eq:scale} this implies $t \le 1$. If $t=1$, we must have $\pi(g_{1,v,\Id}) f(v)=f(v)$. We shall write from now on $S_{v}:=\pi(g_{1,v,\Id})$ to simplify notation. Note that $S_{v+w}=S_{v}\circ S_{w}$. According to the previous remark we have:

$$S_{v_{1}+v_{2}}(f(v_{1}+v_{2}))=f(v_{1}+v_{2}).$$
\noindent Using the cocycle relation~(\ref{ad}) on $f$ we deduce:
$$S_{v_{1}}S_{v_{2}}(f(v_{1})+S_{v_{1}}f(v_{2}))=f(v_{2})+S_{v_{2}}(f(v_{1})),$$
\noindent and then
$$S_{2v_{1}}(f(v_{2}))=f(v_{2}).$$
\noindent Hence $S_{w}(f(v))=f(v)$ for all $v,w,\in \R^{n-1}$. Using Equation~(\ref{ad}) again, this implies that $f\colon \R^{n-1} \to E$ is linear. It has to be injective; indeed if $f(v)=0$ and $v\neq 0$, the isometry $\varrho(g_{1,v,\Id})$ is elliptic, in contradiction with the fact that $\varrho$ preserves the type. Hence $V$ has dimension $n-1$, and $\varrho$ preserves the linear space $\R\eta_{1}\oplus \R\eta_{2}\oplus V$.
\end{proof}

Note that if $V$ contains a non-trivial subspace on which $\R^{n-1}\simeq \{g_{1,v,\Id}\}$ acts trivially we must have $t=1$. Indeed let  $p$ be the orthogonal projection from $V$ to the space of fixed points of $\R^{n-1}$. Then $p\circ f$ is a homomorphism, which is non-trivial since the image of $f$ generates a dense subspace of $V$. This implies that $t=1$. Hence, if $t<1$, the orthogonal representation $\pi$ does not have $\R^{n-1}$-invariant vectors inside $V$. 

\medskip
To prove the first part of Theorem~\ref{class}, it suffices to show that the translation length function associated to $\varrho$ satisfies $\ell_{\varrho}=t\ell_{\HH}$ for hyperbolic elements of $\Isomn$ since we have already established that the action $\varrho$ preserves the type. It is enough to consider $P$ since every hyperbolic element is conjugated to an element of $P$. Now the statement follows from the previous proposition because the  translation length is the logarithm of the Busemann character at the forward fixed point (both in $\HH$ and in $\HHI$) and the Busemann character of $g_{\lambda,v,A}$ is $\lambda$ in $\HH$ and $\chi(\lambda)$ in $\HHI$.

\medskip
From now on, we will assume that the action $\varrho$ is not only non-elementary but also irreducible. We will prove that $\varrho$ is completely determined by the number $t$ appearing in the previous proposition. 

\begin{prop}
The subspace $V$ is equal to $E$ and the representation $\varrho$ is determined by its restriction to $P$. 
\end{prop}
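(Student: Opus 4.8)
The plan is to prove two things: first that the closed span $V$ coincides with all of $E$, and second that once we know the restriction $\varrho|_P$ we can recover all of $\varrho$ on $\Isomn = \langle P, \sigma\rangle$.

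For the first claim, suppose $V \subsetneq E$, and write $E = V \oplus V^{\perp}$ with $V^{\perp} \neq 0$. The subspace $W := \R\eta_1 \oplus \R\eta_2 \oplus V$ is closed, $B$-nondegenerate (it contains the isotropic pair $\eta_1,\eta_2$ with $B(\eta_1,\eta_2)=1$ and a negative-definite complement $V$), and $\varrho(P)$-invariant; its $B$-orthogonal complement is precisely $V^{\perp}$, on which $-B$ is positive definite. From the block form of $\varrho(g_{\lambda,v,A})$ one reads off that $P$ acts on $V^{\perp}$ through the orthogonal representation $\pi$ restricted there — indeed the off-diagonal entries of $\varrho(g_{\lambda,v,A})$ all land in $V = \overline{\mathrm{Span}}\{c(g)\}$, so $V^{\perp}$ is genuinely a subrepresentation with trivial cocycle part. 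Now $\varrho$ being irreducible means $\mathscr{H}$ has no proper closed $\varrho(\Isomn)$-invariant subspace, but $W$ is a proper closed $\varrho(P)$-invariant subspace; the point is to promote $P$-invariance to $\Isomn$-invariance, or rather to derive a contradiction. The cleanest route: since $t < 1$ (the case $t=1$ is already handled — then $\varrho$ preserves a finite-dimensional totally geodesic subspace and $V^\perp$ would force reducibility outright, contradicting irreducibility unless that subspace is everything and $V = E$ there too), the earlier observation shows $\pi$ has no $\R^{n-1}$-invariant vectors in $V$; but on $V^{\perp}$ the group $\{g_{1,v,\Id}\}$ acts by orthogonal transformations with an invariant vector analysis. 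I would instead argue via the $P$-minimality already established: $\HHI_P = (\R\eta_1 \oplus \R\eta_2 \oplus V)\cap\HHI$ is the \emph{unique} minimal $P$-invariant totally geodesic subspace, and then show that $\sigma$ does not preserve it unless $V=E$, using the explicit form of $\sigma$ and the fact that $\sigma$ conjugates the one-parameter group $g_{e^s,0,\Id}$ to its inverse while interacting nontrivially with the $U$-directions — but since $\varrho$ is irreducible, $\varrho(\Isomn)$ has no invariant closed totally geodesic proper subspace, so $\HHI_\varrho = \HHI$; combined with $\HHI_P \subseteq \HHI_\varrho$ and a dimension/density argument forcing $\HHI_P = \HHI$, one gets $V = E$.

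For the second claim — that $\varrho$ is determined by $\varrho|_P$ — the idea is that $\sigma$ is an involution in $\Isomn$ with $\sigma \notin P$, and $\varrho(\sigma)$ must be an isometry of $\HHI$ that, together with $\varrho(P)$, satisfies all the relations of $\Isomn = \langle P, \sigma \rangle$. Since $\varrho|_P$ is irreducible as a $P$-action on $\mathscr{H} = \R\eta_1\oplus\R\eta_2\oplus E$ (now that $V = E$), any two choices of $\varrho(\sigma)$ compatible with a fixed $\varrho|_P$ differ by an element commuting with $\varrho(P)$, and by Schur-type irreducibility this intertwiner is scalar; being an isometric involution it must be $\pm\Id$, and the relations in $\Isomn$ (e.g. $\sigma$ together with $P$ generating the whole group, with $\sigma^2 = 1$ and prescribed conjugation behavior) pin it down uniquely. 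Concretely: $\varrho(\sigma)$ normalizes $\varrho$ of the one-parameter group $\{g_{e^s,0,\Id}\}$, hence swaps its two fixed points $[\eta_1],[\eta_2]$, so $\varrho(\sigma)\eta_1 = a\eta_2$ for some scalar $a$; isometry and $\sigma^2=1$ force $a = \pm 1$ and the sign is fixed by positivity on $\HHI$. Then the action on $E$ is determined because $\varrho(\sigma)$ must conjugate $\varrho(g_{1,v,\Id})$ to $\varrho(\sigma g_{1,v,\Id}\sigma)$, and as $v$ ranges over $\R^{n-1}$ the vectors $c(1,v,\Id)$ span a dense subspace of $E = V$, so the values $\varrho(\sigma)(c(1,v,\Id))$ are determined, hence $\varrho(\sigma)$ is determined on a dense subspace and therefore everywhere by continuity.

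The main obstacle will be the first part: ruling out a proper $\varrho(P)$-invariant nondegenerate summand $V^\perp$ of $E$. The subtlety is that $V^\perp$ is a bona fide orthogonal $P$-subrepresentation on which the parabolic unipotent radical could act nontrivially, so one cannot immediately contradict irreducibility of $\varrho|_{\Isomn}$ just from reducibility of $\varrho|_P$. The resolution must use that $\HHI_P$ is \emph{minimal} among $P$-invariant totally geodesic subspaces together with the generation $\Isomn = \langle P,\sigma\rangle$: one shows the smallest $\sigma$- and $P$-invariant closed totally geodesic subspace containing $\HHI_P$ is all of $\HHI_\varrho = \HHI$, and then that this subspace is still governed by $V$ (because $\sigma$ acts on the $\eta_i$ and on $E$ in a way that cannot create directions outside $\overline{\R\eta_1\oplus\R\eta_2\oplus V}$ without $V$ already being $\sigma$-invariant and equal to $E$), forcing $V = E$.
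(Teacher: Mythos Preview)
Your proposal has a genuine gap in both halves, and they stem from the same missing idea.

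For the determination of $\varrho(\sigma)$, your Schur argument does not apply: the linear representation $\varrho|_P$ on $\mathscr{H}$ is \emph{never} irreducible, since $P$ fixes the isotropic line $\R\eta_1$. So two isometric involutions compatible with $\varrho|_P$ need not differ by a scalar. Your fallback ``concrete'' argument is circular: you want to use the relation $\varrho(\sigma)\varrho(g_{1,v,\Id})\varrho(\sigma)^{-1}=\varrho(\sigma g_{1,v,\Id}\sigma^{-1})$, but $\sigma g_{1,v,\Id}\sigma^{-1}$ fixes $\xi_2$, not $\xi_1$, hence lies \emph{outside} $P$. Knowing $\varrho|_P$ tells you nothing about the right-hand side, so this relation gives no information about $\pi(\sigma)$ on~$E$.

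For $V=E$, you correctly identify the target---show that $\varrho(\sigma)$ preserves $\R\eta_1\oplus\R\eta_2\oplus V$ and invoke irreducibility of $\varrho|_{\Isomn}$---but you supply no mechanism. Minimality of $\HHI_P$ only controls $B$-nondegenerate $P$-invariant subspaces; it says nothing about how $\sigma$ interacts with $V$. The phrases ``dimension/density argument'' and ``$\sigma$ cannot create directions outside $V$'' are placeholders, not arguments.

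The paper resolves both points with a single explicit computation: one writes down a specific word $g=\sigma\, g_{\lambda,v,\Id}\,\sigma\, g_{\mu,w,\Id}\,g_{\lambda,v,\Id}\,\sigma$ and chooses the parameter $w$ so that $g$ lands \emph{back in} $P$ (this is a Bruhat-type identity). Comparing the matrix of $\varrho(g)$ computed two ways---once via the known form of $\varrho|_P$, once by multiplying out the factors including the unknown $\varrho(\sigma)$---yields an explicit formula $\pi(\sigma)(c(\lambda,v,\Id))=\chi(\lambda\mu)^{-1}c(\eta,u,A)$ with the right-hand side in $V$. This simultaneously shows $\pi(\sigma)(V)\subset V$ (hence $V=E$ by irreducibility) and pins down $\pi(\sigma)$ on the dense span of the $c(\lambda,v,\Id)$, along with the scalar $\nu$ in $\varrho(\sigma)\eta_1=\nu\eta_2$. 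The essential idea you are missing is precisely this: a relation involving $\sigma$ whose \emph{output} lies in $P$, so that $\varrho|_P$ can be brought to bear on it.
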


\begin{proof}
We have already noticed that $P$ together with the involution $\sigma$ generate the group $\Isomn$. Here, as in~\cite{bim}, we use the relations between $\sigma$ and $P$ to prove that $\varrho(\sigma)$ is determined by the restriction of $\varrho$ to $P$. More precisely, we consider the product
$$g:=\sigma \cdot g_{\lambda,v,\Id}\cdot \sigma \cdot g_{\mu,w,\Id}\cdot g_{\lambda,v,\Id}\cdot \sigma,$$ 
where $\lambda, \mu \in \R^{\ast}$ and $v\in \R^{n-1}- \{0\}$ are fixed and $w$ will be determined. As in the definition of the involution $\sigma$, we will denote by $J_{1}$ the orthogonal reflexion of $\R^{n-1}$ fixing the hyperplane orthogonal to the first vector of the canonical basis. A long but simple calculation shows that if $w$ is chosen so that
$$\frac{w}{\lambda}+v=\frac{-2J_{1}(v)}{\lambda \mu \vert v \vert^{2}},$$
then the element $g$ above belongs to $P$ and one actually has $g=g_{\eta,u,A}$ for
$$\eta=\frac{2}{\lambda^{2}\mu \vert v \vert^{2}}, \kern10mm u=\lambda \mu J_{1}(v), \kern 10mm A=s_{u}\circ J_{1}.$$
Here $s_{u}$ is the reflection orthogonal to the vector $u$. We now use the relation 
$$\varrho(g)=\varrho(\sigma) \varrho(g_{\lambda,v,\Id}) \varrho(\sigma) \varrho(g_{\mu,w,\Id})\varrho(g_{\lambda,v,\Id}) \varrho(\sigma)$$
to determine $\varrho(\sigma)$. Since $\varrho(\sigma)$ interchanges the two fixed points at infinity of $\varrho(g_{\lambda,0,\Id})$, we can write:
$$\varrho(\sigma)=\left(\begin{array}{ccc}
0 & \nu^{-1} & 0\\
\nu & 0 & 0\\
0 & 0 & \pi (\sigma)\\
\end{array}\right).$$ We first compute the product $\varrho(\sigma)\varrho(g_{\lambda,v,\Id})\varrho(\sigma)\varrho(g_{\mu,w,\Id})\varrho(g_{\lambda,v,\Id})$ and find that it has the following form:
$$\left( \begin{array}{ccc}
\chi(\mu) & \frac{\chi (\mu)}{2}\vert c(\lambda \mu , \frac{w}{\lambda}+v,{\rm Id})\vert^{2} & \ast \\
\frac{1}{2}\nu^{2}\chi(\lambda^{2}\mu)\vert c(\lambda,v,\Id)\vert^{2} & \ast & \ast\\
 \nu \chi(\lambda \mu)\pi(\sigma)(c(\lambda,v,\Id)) & \ast & \ast\\
\end{array}\right).$$
\noindent Hence $\varrho(g)$, which is obtained from the matrix above by multiplying on the right by $\varrho(\sigma)$, reads:
$$\left(\begin{array}{ccc}
\frac{1}{2}\nu \chi(\mu)\vert c(\lambda\mu,\frac{w}{\lambda}+v,\Id)\vert^{2} & \ast & \ast \\
\ast & \ast & \ast \\
\ast & \chi( \lambda \mu)\pi(\sigma)(c(\lambda,v,\Id))& \ast \\
\end{array}\right).$$

\noindent But this must be equal to the matrix
$$\left( \begin{array}{ccc}
\chi(\eta) & \frac{1}{2}\chi(\eta)\vert c(\eta,u,A)\vert^{2} & \chi(\eta)\langle c(\eta,u,A),\pi(\eta,u,A)(\cdot)\rangle \\
0 & \chi(\eta^{-1}) & 0\\
0 & c(\eta,u,A) & \pi(\eta,u,A)\\ 
\end{array}\right)$$

\noindent This shows that $\frac{1}{2}\nu \chi(\mu)\vert c(\lambda\mu,\frac{w}{\lambda}+v,\Id)\vert^{2}=\chi(\eta)=\chi(2/\lambda^{2}\mu\vert v \vert^{2})$, hence $\nu$ is determined by the restriction of $\varrho$ to $P$. It further shows
$$\pi(\sigma)(c(\lambda,v,\Id))=\chi(\lambda \mu)^{-1}c(\eta,u,A).$$
Since the vectors $c(\lambda,v,\Id)$ generate a dense subspace of $V$, this implies that $\pi(\sigma)$ preserves the space $V$, hence $\varrho(\sigma)$ preserves the space $\R \eta_{1}\oplus \R \eta_{2} \oplus V$. Since $\varrho$ was assumed irreducible, this implies that 
$$\mathscr{H} = \R \eta_{1} \oplus \R \eta_{2} \oplus V,$$
\noindent in particular $V=E$. Moreover $\pi (\sigma)$ is determined on a dense subspace of $E$, hence everywhere on $E$. This completes the proof of the proposition.
\end{proof}

We must now determine completely the restriction of the representation $\varrho$ to $P$, from the number $t$ only. Observe that the stabilizer of $[\eta_{1}]$ in $\Isomi$ is isomorphic to the group ${\rm Sim}(E)$ of similarities of the Hilbert space $E$. Hence, we must study homomorphisms
$$P \lra {\rm Sim}(E),$$
subject to the various constraints we just collected. We will do this in the next paragraph.

\subsection{\texorpdfstring{Affine representations of the group $\simil$}{Affine representations}}
In this section, we work directly with the group $\simil$ of similarities of $\R^{\ell}$. We denote by $S_{\lambda,v,A}$ the map
$$\begin{array}{rcl}
\R^{\ell} & \lra & \R^{\ell}\\
x & \longmapsto & \lambda A(x)+v\\
\end{array}$$
where, once again, $\lambda \in \R_{+}^{\ast}$, $A\in {\rm O}(\ell)$ and $v\in \R^{\ell}$. We will study representations (subject to various constraints) of $\simil$ in the group of similarities of a Hilbert space.

We thus start with a representation $\alpha_{t} \colon  \simil \to {\rm Sim}(E)$, where ${\rm Sim}(E)$ is the group of similarities of a real Hilbert space $E$. We write
$$\alpha_{t}(S_{\lambda,v,A})(x)=\lambda^{t}\pi(S_{\lambda,v,A})(x)+u(S_{\lambda,v,A})\;\;\;\;\; (x\in E)$$
with $\pi$ orthogonal. Thus $\pi$ is an orthogonal representation and $u$ is a $1$-cocycle for the representation $\pi\otimes \chi_{t}$, where $\chi_{t}$ is the character of $\simil$ defined by $\chi_{t}(S_{\lambda,v,A})=\lambda^{t}$. We shall make the following standing assumptions in view of the results of the previous section:

\smallskip
\begin{enumerate}
\item $0<t<1$,
\item $\alpha_{t}$ preserves the type (defined via the embedding ${\rm Sim}(E)\to\Isomi$),
\item $\pi$ has no $\R^{\ell}$-invariant vectors,\label{cond:no-inv}
\item the vector space generated by the vectors $(u(S_{1,v,\Id}))_{v\in \R^{\ell}}$ is dense in $E$,\label{cond:dense}
\item $u(S_{\lambda,v,A})=u(S_{1,v,\Id})$.\label{cond:conj}
\end{enumerate}
We will see in the proof of Lemma~\ref{dimco} that the last condition can always be achieved up to conjugacy. Note also that with $\ell=n-1$, the last equation is equivalent to identity~(\ref{aaaaa}), using the natural isomorphism
\begin{equation}\label{isooooo}
\begin{array}{rcl}
P & \lra & {\rm Sim}(\R^{\ell})\\
g_{\lambda,v,A} & \longmapsto & S_{\lambda,\lambda v, A}\\
\end{array}
\end{equation}
between the group $P$ considered in the previous section and the group ${\rm Sim}(\R^{\ell})$.

\bigskip

The purpose of this section is to prove that a representation $\alpha_{t}$ satisfying the conditions above is completely determined by $t$. Applying this result with $\ell=n-1$ allows to conclude the proof of Section~\ref{parabolic}: indeed, using the isomorphism~\eqref{isooooo} we obtain that the representation $P\to {\rm Sim}(E)$ appearing at the end of the previous section (and hence the original representation $\varrho \colon \Isomn \to \Isomi$) is completely determined by the parameter $t$, thus completing the proof of Theorem~\ref{class}, up to the existence of the representations $\varrho_{t}$.

\medskip
 We will denote by $\pi_{0}$ the unitary representation of $\simil$ on $\LLl$ defined by
 $$\big(\pi_{0} \left( S_{\lambda,v,A}\right) f\big) (y)=\lambda^{\frac{\ell}{2}}e^{i\langle y,v\rangle}f(\lambda A^{-1}y).$$
 We will need the following information about $\pi_{0}$: 
 \begin{lemma}\label{dimco} The space $${\rm H}^{1}(\simil, \chi_{t}\otimes \pi_{0})$$ 
\noindent is $1$-dimensional, generated by the cohomology class of the cocycle defined by
$$\widetilde{u}(S_{\lambda,v,A})(y)=\frac{e^{i\langle    y, v \rangle}-1}{\vert  y \vert^{t+\frac{\ell}{2}}}.$$ 
\end{lemma}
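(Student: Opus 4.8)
The plan is to compute $\mathrm{H}^1(\simil, \chi_t \otimes \pi_0)$ by exploiting the semidirect product structure $\simil = \R^l \rtimes (\R_+^* \times \mathrm{O}(l))$ and the fact that $\pi_0$ is (up to the scaling twist) the quasi-regular representation on $\LLl$ realized via the Fourier transform. First I would identify $\chi_t \otimes \pi_0$ concretely: after conjugating by the Fourier transform, the translation subgroup $\R^l \cong \{S_{1,v,\Id}\}$ acts by multiplication by $e^{i\langle y,v\rangle}$, the rotations $A \in \mathrm{O}(l)$ act by $f(y) \mapsto f(A^{-1}y)$, and the dilation $S_{\lambda,0,\Id}$ acts by $f(y) \mapsto \lambda^{t+l/2} f(\lambda y)$ (the $\lambda^{l/2}$ from $\pi_0$ and $\lambda^t$ from $\chi_t$ combining). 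The candidate cocycle $\widetilde c(S_{\lambda,v,A})(y) = (e^{i\langle y,v\rangle}-1)/|y|^{t+l/2}$ should first be checked to genuinely take values in $\LLl$: near $y=0$ the numerator vanishes to first order so the integrand behaves like $|y|^{2}/|y|^{2t+l} = |y|^{2-2t-l}$, which is integrable near $0$ precisely because $2-2t > 0$ and we integrate against $|y|^{l-1}\,d|y|$... more carefully $|y|^{2-2t-l}\cdot|y|^{l-1} = |y|^{1-2t}$, integrable near $0$ since $t<1$; at infinity the numerator is bounded so the integrand is $\lesssim |y|^{-2t-l}$, integrable since $t>0$. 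This is exactly where hypotheses $0<t<1$ enter. Then one verifies the cocycle identity for $\widetilde c$ by direct substitution.

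The core computation is a Mayer--Vietoris / Lyndon--Hochschild--Serre type argument. Write $N = \R^l$ (the translations), $Q = \simil/N \cong \R_+^* \times \mathrm{O}(l)$. Since $Q$ is amenable (indeed $\R_+^*\times\mathrm{O}(l)$ is compact-by-abelian) one has an exact sequence, in low degrees, controlling $\mathrm{H}^1(\simil,\cdot)$ in terms of $\mathrm{H}^0$ and $\mathrm{H}^1$ of $N$ with the restricted representation, taken $Q$-invariantly. The restriction of $\chi_t\otimes\pi_0$ to $N$ is, via Fourier, the representation of $\R^l$ on $\LLl$ by the characters $y\mapsto e^{i\langle y,v\rangle}$, i.e.\ the "multiplication" representation; its $\mathrm{H}^0$ is $0$ (no invariant vectors, using hypothesis (3) or directly that no $\LLl$ function is fixed by all these multiplications). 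For $\mathrm{H}^1(\R^l, \LLl)$ with this action: a $1$-cocycle $b\colon \R^l \to \LLl$ satisfies $b(v+w)(y) = b(v)(y) + e^{i\langle y,v\rangle} b(w)(y)$, so $b(v)(y)/(e^{i\langle y,v\rangle}-1)$ is independent of $v$ for $y\neq 0$; hence $b(v)(y) = h(y)(e^{i\langle y,v\rangle}-1)$ for a measurable function $h$ on $\R^l\setminus\{0\}$, with the constraint that $v\mapsto b(v)$ lands in $\LLl$ and is continuous. One then determines which such $h$ are admissible: $b$ is a coboundary iff $h\in \LLl$. The quotient — admissible $h$ modulo $\LLl$ — combined with the further requirement of $Q$-equivariance of the cocycle (which forces $h(y)$ to be, up to scalar, $|y|^{-(t+l/2)}$ by the dilation-covariance $h(\lambda y) = \lambda^{-(t+l/2)} h(y)$ and $\mathrm{O}(l)$-invariance of $|h|$), pins the cohomology down to a one-dimensional space. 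The dilation weight is exactly $t+l/2$ because of the $\lambda^{t+l/2}$ scaling identified above, and $|y|^{-(t+l/2)}$ fails to be in $\LLl$ (so it is not a coboundary) yet the associated $b$ does lie in $\LLl$ by the integrability estimate already performed.

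The step I expect to be the main obstacle is the rigorous bookkeeping in the middle step: showing that after quotienting by coboundaries and imposing full $\simil$-equivariance (not just $N$-equivariance), the only surviving class is the stated one — in particular ruling out further solutions $h$ that are not of the pure power form but still satisfy the covariance constraints on the nose or modulo $\LLl$. Concretely one must argue that the map $b\mapsto h$ (mod $\LLl$) is injective on $\mathrm{H}^1$ and that the dilation equivariance of the cocycle, transported to $h$, genuinely forces $h$ to be homogeneous of degree $-(t+l/2)$ modulo $\LLl$, hence a scalar multiple of $|y|^{-(t+l/2)}$ by $\mathrm{O}(l)$-invariance; a clean way to do this is to average $h$ over the compact group $\mathrm{O}(l)$ and over a fundamental domain for the $\R_+^*$-action on $\R^l\setminus\{0\}$, noting these averaging operations descend to cohomology because $\mathrm{O}(l)$ and $\R_+^*$ are amenable and act trivially on the cohomology. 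Alternatively, one can bypass the spectral-sequence formalism and directly parametrize all continuous cocycles $c\colon \simil\to\LLl$ for $\chi_t\otimes\pi_0$ in the Fourier picture using the cocycle relations with respect to translations, rotations, and dilations in turn, which reduces everything to an elementary (if slightly lengthy) computation; this is likely the cleaner route given the very explicit nature of $\pi_0$.
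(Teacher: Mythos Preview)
Your proposal is correct, and in fact the ``alternative'' you sketch in your last sentence is precisely the route the paper takes. The paper's proof is short: it first observes that every cohomology class can be represented by a cocycle $\beta$ vanishing on the subgroup $\{S_{\lambda,0,A}\}\cong \R_+^*\times\mathrm{O}(l)$; then, setting $g(y)=\beta(S_{1,b,\Id})(y)/(e^{i\langle y,b\rangle}-1)$ (independent of $b$ by the translation cocycle relation), the full cocycle identity gives $g(y)=\lambda^{t+l/2}g(\lambda A^{-1}y)$ \emph{exactly}, whence $g(y)=a|y|^{-(t+l/2)}$; non-triviality follows since $g\notin\LLl$.

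The point worth noting is that the normalization step eliminates exactly the ``mod $\LLl$'' bookkeeping you flag as the main obstacle in your primary (spectral-sequence) approach. The reason the normalization is available is that $\mathrm{H}^1(\R_+^*\times\mathrm{O}(l),\chi_t\otimes\pi_0)=0$: since $t\neq 0$, the central element $S_{\lambda,0,\Id}$ acts with $(\pi-1)$ invertible (its spectrum lies on the circle of radius $\lambda^t\neq 1$), and a standard argument then forces every cocycle on this subgroup to be a coboundary. Your Lyndon--Hochschild--Serre route would also work, since $V^N=0$ gives $\mathrm{H}^1(\simil,V)\cong \mathrm{H}^1(N,V)^Q$, but extracting the $Q$-fixed points in $\mathrm{H}^1(N,V)$ (which is $\{h\ \text{admissible}\}/\LLl$) is genuinely messier than the direct computation; the paper's normalization trick is the clean way to collapse that quotient.
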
 

\noindent
Notice that $\widetilde{u}$ ranges indeed in square-summable function classes: the condition $t>0$ ensures the integrability in a neighborhood of infinity and the condition $t<1$ ensures the integrability in a neighborhood of the origin.

\begin{proof}
Given any class in ${\rm H}^{1}(\simil, \chi_{t}\otimes \pi_{0})$, let $\beta\colon \simil\to \LLl$ be a cocycle representing it. We claim that upon replacing $\beta$ by an equivalent cocycle, we can assume that $\beta$ vanishes on the subgroup $\{ S_{\lambda,0,A}\}\simeq \R_{+}^{\ast}\times {\rm O}(\ell)$ of $\simil$. This is equivalent to the fact that $\beta$ satisfies Equation~(\ref{cond:conj}) above. To prove the claim, we must prove that the subgroup $\R_{+}^{\ast}\times {\rm O}(\ell)$ of $\simil$ fixes a point of $E$ in the affine action given by
\begin{equation}\label{eq:vanish_lambda}
\xi \longmapsto \lambda^{t}\pi_{0}(S_{\lambda , v, A})(\xi) + \beta(S_{\lambda,v,A}). 
\end{equation}
But if $\lambda <1$, the element $S_{\lambda, 0, \Id}$ acts by contraction on $E$ under the affine action above. Hence it has a unique fixed point $\xi_0$. Since the group $\R_{+}^{\ast}\times {\rm O}(\ell)$ commutes with $S_{\lambda , 0 , \Id}$, it must also fix $\xi_{0}$.   

From now on we thus assume that $\beta$ satisfies Equation~(\ref{cond:conj}) above. We choose a non-zero vector $b\in \R^{\ell}$. Let $g$ be the measurable function on $\R^{\ell}$ defined by:
$$g(y)=\frac{\beta(S_{1,b,\Id})(y)}{e^{i\langle y,b\rangle}-1}.$$
\noindent
Note that, because of the cocycle relation, $g$ does not depend on the choice of $b$:  indeed using the fact that $S_{1,b_{1},\Id}$ and $S_{1,b_{2},\Id}$ commute (for $b_{1}, b_{2}\in \R^{\ell}$) and the fact that $\beta$ is a cocycle for the representation $\chi_{t}\otimes \pi_{0}$, one has:
$$\begin{array}{rcl}
\beta(S_{1,b_{1}+b_{2},\Id}) & = & \beta(S_{1,b_{1},\Id})+\pi_{0}(S_{1,b_{1},\Id})(\beta(S_{1,b_{2},\Id}))\\
 & = & \beta(S_{1,b_{2},\Id})+\pi_{0}(S_{1,b_{2},\Id})(\beta(S_{1,b_{1},\Id})).\\
\end{array}$$
This can be rewritten as 
$$\beta(S_{1,b_{1},\Id})-\pi_{0}(S_{1,b_{2},\Id})(\beta(S_{1,b_{1},\Id}))=\beta(S_{1,b_{2},\Id})-\pi_{0}(S_{1,b_{1},\Id})(\beta(S_{1,b_{2},\Id})),$$
which is equivalent to
$$(1-e^{i\langle y, b_{2}\rangle})\beta(S_{1,b_{1},\Id})(y)=(1-e^{i\langle y, b_{1}\rangle})\beta(S_{1,b_{2},\Id})(y)$$
for almost every $y$. This indeed says that the function $g$ does not depend on the choice of $b\in \R^{\ell}-\{0\}$. 

We now consider the equation
\begin{equation}\label{ce}
\beta(S_{\lambda \mu,\lambda Ad+b,AB})=\beta(S_{\lambda,b,A})+\lambda^{t}\pi_{0}(S_{\lambda,b,A})(\beta(S_{\mu,d,B})),
\end{equation}
(which says that $\beta$ is a cocycle). Using the function $g$ it can be rewritten: 
$$\begin{array}{ccl}
\left( e^{i\langle y,\lambda Ad+b\rangle}-1\right)g(y) & = & \left( e^{i\langle y,b\rangle}-1\right) g(y)\\
 & & +\lambda^{t+\frac{\ell}{2}}e^{i\langle y,b\rangle}\left( e^{i\langle \lambda A^{-1}y,d\rangle}-1\right)g(\lambda A^{-1}y),\\
 \end{array}$$
which yields, after simplification:
\begin{equation}\label{equationtorduepourg_triv}
g(y)=\lambda^{t+\frac{\ell}{2}}g(\lambda A^{-1}y).  
\end{equation}
By taking $\lambda=1$ and letting $A$ vary in ${\rm O}(\ell)$, we see that $g$ is constant on the unit sphere of $\R^{\ell}$. The equation $g(y)=\lambda^{t+\frac{\ell}{2}}g(\lambda y)$ then implies that $$g(y)=\frac{a}{\vert y \vert^{t+\frac{\ell}{2}}}$$
for some constant $a$. In other words, $\beta$ is proportional to the cocycle $\widetilde{u}$. This proves that the group ${\rm H}^{1}(\simil, \chi_{t}\otimes \pi_{0})$ is generated by the cohomology class of $\widetilde{u}$. 

We still need to check that this class is non-zero. Denote by $g_{0}$ the function defined by $g_{0}(y)=\vert y \vert^{-(t+\frac{\ell}{2})}$. Observe that although $g_{0}$ is not square-summable one can still define the action of the operators $\chi_{t}\otimes \pi_{0}(S_{\lambda, v,A})$ on $g_{0}$. One has:
$$\chi_{t}\otimes \pi_{0}(S_{\lambda, v,A})(g_{0})(y)=\frac{e^{i\langle y , v \rangle}}{\vert y \vert^{t+\frac{l}{2}}},$$
and hence
$$\chi_{t}\otimes \pi_{0}(S_{\lambda, v,A})(g_{0})(y)-g_{0}(y)=\frac{e^{i\langle y , v \rangle}-1}{\vert y \vert^{t+\frac{l}{2}}}.$$
In other words, $\widetilde{u}$ is the formal coboundary of the function $g_{0}$. Now, if a function $f \in \LLl$ satisfies $\widetilde{u}(S_{\lambda,v,A})=\lambda^{t}\pi_{0}(S_{\lambda,v,A})f-f$, the function $f-g_{0}$ is invariant by the representation $\chi_{t}\otimes \pi_{0}$ of $\simil$. But any measurable function invariant by the action of the translation subgroup $\R^{\ell}\triangleleft \simil$ must be zero since $S_{1,v,\Id}$ acts by multiplication by $e^{i\langle y,v\rangle}$. Hence $f=g_{0}$. Since $g_{0}$ is not square-summable, we have a contradiction. Hence there is no function $f \in \LLl$ whose coboundary is equal to $\widetilde{u}$. This proves that the cohomology class of $\widetilde{u}$ is non-zero.  
\end{proof}
 
We now study the complexification of the orthogonal representation~$\pi$. 
 
\begin{prop}\label{gorbul}
The complexification $\pi_{\C}$ of $\pi$ is isomorphic to $\pi_{0}$; therefore the space ${\rm H}^{1}(\simil, \chi_{t}\otimes \pi_{\C})$ is $1$-dimensional.
\end{prop}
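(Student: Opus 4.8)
The plan is to identify the orthogonal representation $\pi$ of $\simil$ via its restriction to the normal abelian subgroup $N = \{S_{1,v,\Id}\}\cong\R^l$, using the standing hypotheses (cyclicity of the $c(S_{1,v,\Id})$, absence of $N$-invariant vectors, and the $B$-type preservation) to pin down the spectral measure, and then to bring in the action of the dilations $\R_+^\ast$ and rotations ${\rm O}(l)$ to rigidify everything. Passing to the complexification $\pi_\C$, Stone's theorem (SNAG) applied to the unitary action of $N\cong\R^l$ gives a projection-valued measure on the dual $\widehat{N}\cong\R^l$, so that $\pi_\C$ is realized on an $L^2$-space of a measure $\mu$ on $\R^l$ with values in a measurable field of Hilbert spaces, with $N$ acting by the characters $e^{i\langle y,v\rangle}$. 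The first key step is to observe that the conjugation action of the dilation/rotation part $\R_+^\ast\times{\rm O}(l)$ normalizes $N$ and acts on $\widehat{N}$ by the dual (inverse-transpose) similarities; hence the class of $\mu$ must be invariant under the action of $\R_+^\ast\times{\rm O}(l)$ on $\R^l\setminus\{0\}$. Since that action is transitive on $\R^l\setminus\{0\}$ and the only invariant measure class supported away from $0$ is that of Lebesgue measure (and $\mu(\{0\})=0$ because $\pi$ has no $N$-invariant vectors), $\mu$ is equivalent to Lebesgue measure on $\R^l$.

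The second key step is to show the multiplicity is one, i.e. the measurable field of Hilbert spaces is $1$-dimensional a.e., so that $\pi_\C\cong L^2(\R^l)$ with $N$ acting by $f(y)\mapsto e^{i\langle y,v\rangle}f(y)$. For this I would use the cyclicity hypothesis: the vectors $c(S_{1,v,\Id})$ span a dense subspace, and by the cocycle identity~(\ref{ad}) together with $c(S_{1,v,\Id}) = f(v)$ and $f(v+w) = f(v) + S_v f(w)$, the family $\{f(v)\}$ is ``cyclic modulo the group action'' in a strong sense; differentiating the relation $f(\lambda v) = \lambda^t\,\pi(g_{\lambda,0,\Id})f(v)$ and the additive cocycle relation, one sees that a single orbit of vectors generates, forcing multiplicity one. (Equivalently: were the multiplicity $\geq 2$, one could build an $\alpha_t$-invariant proper closed subspace violating either cyclicity or irreducibility.) Finally, matching the action of $S_{\lambda,0,A}$: on $L^2(\R^l)$ with Lebesgue measure, the unitary implementing conjugation by $S_{\lambda,0,A}$ on $N$ is, up to a scalar phase which can be normalized away, exactly $f(y)\mapsto \lambda^{l/2}f(\lambda A^{-1}y)$ — the Jacobian factor $\lambda^{l/2}$ being forced by unitarity. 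This is precisely the formula defining $\pi_0$, so $\pi_\C\cong\pi_0$. The $1$-dimensionality of ${\rm H}^1(\simil,\chi_t\otimes\pi_\C)$ is then immediate from Lemma~\ref{dimco}.

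The main obstacle I anticipate is the multiplicity-one argument: ruling out higher multiplicity cleanly requires exploiting the cyclicity condition~(\ref{cond:dense}) in tandem with the affine (cocycle) structure rather than just the linear representation $\pi$ in isolation, since a priori $\pi_0^{\oplus k}$ would also satisfy conditions (1)–(3) as a linear representation. The point is that condition~(\ref{cond:dense}) constrains the \emph{cocycle} $c$, and since $c$ is determined by $f=c(S_{1,\cdot,\Id})$ which takes values governed by the single relation~(\ref{ad}), the closed span of $\{f(v)\}$ cannot fill a multiplicity-$\geq 2$ space — this is where the argument has real content. A secondary, more routine, point is the normalization of the phase ambiguity in the unitary implementing the ${\rm O}(l)\times\R_+^\ast$ action on $L^2(\R^l)$; this is handled by absorbing the phase into the choice of unitary isomorphism $\pi_\C\cong L^2(\R^l)$, using that ${\rm O}(l)\times\R_+^\ast$ has no nontrivial continuous homomorphism to the circle that would obstruct it. Once $\pi_\C\cong\pi_0$ is established, the cohomological conclusion follows directly from the preceding lemma.
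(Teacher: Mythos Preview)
Your overall strategy is close to the paper's --- the paper invokes Mackey's imprimitivity theorem, which packages exactly the SNAG-plus-orbit-analysis you describe --- but there is a genuine gap in your final step.

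You claim the phase ambiguity in the unitary implementing the $\R_+^\ast\times{\rm O}(l)$-action is handled because ``${\rm O}(l)\times\R_+^\ast$ has no nontrivial continuous homomorphism to the circle''. This is false: the determinant ${\rm O}(l)\to\{\pm 1\}\subset\bS^1$ is such a homomorphism (for $l\geq 1$). More precisely, once multiplicity one is established, the Mackey description still leaves a choice of character $\eta$ of the stabilizer ${\rm O}(l-1)$ of a point on the sphere, and $\eta=\det$ is a genuine alternative to $\eta=1$. The paper rules out $\eta=\det$ by a separate argument: repeating the computation of Lemma~\ref{dimco} in the presence of this character, one finds that the function $g(y)=c(S_{1,b,\Id})(y)/(e^{i\langle y,b\rangle}-1)$ must satisfy $g(Ay)=\det(A)g(y)$; since ${\rm SO}(l)$ already acts transitively on the sphere, $g$ is constant, and then $\det(A)=-1$ forces $g=0$, contradicting nontriviality of $c$. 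This step uses the cocycle $c$ essentially and is not a routine normalization.

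Your multiplicity-one argument is also too vague as written. The paper's device is cleaner and worth knowing: if the multiplicity were $p\geq 2$, pass to a copy of $\pi_0\oplus\pi_0$ and decompose $c=(c_1,c_2)$; the computation underlying Lemma~\ref{dimco} then forces each $c_i$ to be a scalar multiple $a_i\widetilde{c}$, so every vector in the span of $\{c(S_{1,v,\Id})\}$ satisfies the linear relation $a_2 v_1 - a_1 v_2 = 0$, contradicting the density hypothesis~(\ref{cond:dense}). This is the concrete mechanism by which cyclicity of the cocycle forces multiplicity one; your sketch via ``differentiating'' and ``a single orbit generates'' does not supply it.
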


\begin{proof}
Recall from condition~\eqref{cond:no-inv} that $\pi$ has no $\R^{\ell}$-invariant vectors; thus, applying Mackey's Theorem (Theorem 14.1 in~\cite{mackey52}), the representation $\pi_{\C}$ can be constructed in the following way as an induced representation:

Choose a unit vector $e$ in $\mathbf{R}^{\ell}$ with stabilizer ${\rm O}(\ell-1)$. Choose a measurable section $s\colon \bS^{\ell-1} \to {\rm O}(\ell)$ of the corresponding projection ${\rm O}(\ell)\to  \bS^{\ell-1} \cong {\rm O}(\ell) / {\rm O}(\ell-1)$. This determines a cocycle $\gamma\colon {\rm O}(\ell)\times  \bS^{\ell-1}\to {\rm O}(\ell-1)$ by the rule $\gamma(A, v) = s(A v)^{-1} A s(v)$. We extend it to $v\in \R^{\ell}-\{0\}$ by $\gamma(A, v) =\gamma(A, v/|v|)$. Then there exists a unitary representation $\eta$ of ${\rm O}(\ell-1)$ on a Hilbert space $U$ such that $\pi_{\C}$ is, up to isomorphism, the representation on ${\rm L}^{2}(\R^{\ell},U)$ defined by
\begin{equation}\label{pic}
\left( \pi_{\C}(S_{\lambda,v,A})\cdot f\right)(y)=\lambda^{\frac{\ell}{2}}e^{i\langle y, v\rangle} \eta(\gamma(A^{-1},y)^{-1} ) f(\lambda A^{-1}y).
\end{equation}
Note that any decomposition of $U$ into a sum of two representations induces a decomposition of $\pi_{\C}$. We will prove the following claims: (a) $\pi_0$ appears as a sub-representation of $\pi_{\C}$ with multiplicity at most $1$; (b) any representation of $\simil$ of the form~\eqref{pic} (for some cocycle $\gamma$ and some unitary representation $\eta$) and with nontrivial first cohomology must contain a copy of $\pi_{0}$. These two points imply that $\pi_{\C}$ is isomorphic to $\pi_{0}$; the statement about the first cohomology group then follows from Lemma~\ref{dimco}. 

The cocycle $u$ (which originally ranges in the Hilbert space~$E$) is now thought of as a cocycle ranging in $E\otimes \C \simeq {\rm L}^{2}(\R^{\ell},U)$. Once again we fix $b\in \R^{\ell}-\{0\}$ and define a measurable function $g\colon \R^{\ell}\to U$ by
$$g(y)=\frac{u(S_{1,b,\Id})(y)}{e^{i\langle y, b\rangle}-1}.$$
As in Lemma~\ref{dimco}, it does not depend on the choice of $b$. We now rewrite equation~\eqref{ce} for the cocycle $u$ instead of the cocycle $\beta$ appearing in the proof of Lemma~\ref{dimco}. The equation becomes:
$$u(S_{\lambda \mu,\lambda Ad+b,AB})=u(S_{\lambda,b,A})+\lambda^{t}\pi_{\C}(S_{\lambda,b,A}) u(S_{\mu,d,B}).$$
Using condition~\eqref{cond:conj} on the cocycle $u$ and the function $g$ that we introduced, we can repeat the calculation made in the proof of Lemma~\ref{dimco}, taking into account the presence of the cocycle $\gamma$ and the unitary representation $\eta$. This leads to
\begin{equation}\label{equationtorduepourg}
g(y)=\lambda^{t+\frac{\ell}{2}}\eta(\gamma(A^{-1}, y)^{-1}) g(\lambda A^{-1}y).
\end{equation}

We now turn to the proof of claim (a). The representation $p\pi_{0}$ of $\simil$ is equivalent to the representation given by the right-hand side of~\eqref{pic} with $\eta$ trivial and $U$ of dimension $p$. So we assume for a moment that $\eta$ is trivial (up to replacing $\pi_{\C}$ by a sub-representation and up to taking the projection of $u$ to that sub-representation). In this case~\eqref{equationtorduepourg} reduces to the relation~\eqref{equationtorduepourg_triv} in the proof of Lemma~\eqref{dimco}. Thus we have again some vector $v\in U$ with $g(y)=\vert y \vert^{-t-\ell/2} v$, or in other words
$$u(S_{1,v,\Id})(y)=\frac{e^{i\langle y , v \rangle}-1}{\vert y \vert^{t+\frac{\ell}{2}}} v.$$
Since the complex vector space spanned by the family $u(S_{1,v,\Id})$ (for $v\in \R^{\ell}$) is dense in ${\rm L}^{2}(\R^{\ell},U)$ this implies that $U$ must be $1$-dimensional, thus proving claim (a).

We now prove claim (b). We consider~\eqref{equationtorduepourg} with $\lambda=1$; this states that $g$ yields a $\gamma$-equivariant map $\R^{\ell}-\{0\}\to U$ with respect to the cocycle $\gamma \colon {\rm O}(\ell)\times(\R^{\ell}-\{0\}) \to {\rm O}(\ell-1)$. The cocycle reduction lemma (see~\cite[p.~108]{Zimmer84}, applied to each sphere)  implies that 
$$\gamma(A,y)=\varphi^{-1}(Ay)\gamma'(A,y)\varphi(y)$$ where the equivalent cocycle $\gamma'$ ranges into the stabilizer in ${\rm O}(\ell-1)$ of a nonzero vector $\varepsilon\in U$. By considering the operator mapping a function $f\in {\rm L}^{2}(\R^{\ell},U)$ to the function $y\mapsto \eta(\varphi(y))f(y)$, we can assume that our representation is given by formula~\eqref{equationtorduepourg} with $\gamma'$ replacing $\gamma$. The subspace ${\rm L}^{2}(\R^{\ell},\C\cdot \varepsilon)$ of ${\rm L}^{2}(\R^{\ell},U)$ then gives a sub-representation isomorphic to $\pi_{0}$. This proves claim (b) and finishes the proof of the Proposition.\end{proof}

Having described the complexification $\pi_{\C}$ of $\pi$, we now go back to $\pi$ itself; the transition relies on the following fact.

\begin{lemma}
Let $I \colon  {\rm L}^{2}(\R^{\ell})\to {\rm L}^{2}(\R^{\ell})$ be an antilinear involution which commutes with the $\pi_0$-action of ${\rm Sim}(\R^{\ell})$. Then $I=e^{i\vartheta}I_{0}$ for some $\vartheta\in\R$, where $I_{0}$ is the involution defined by:
$$(I_{0}f)(y)=\overline{f(-y)}.$$
\end{lemma}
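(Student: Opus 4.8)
The plan is to use the classification of irreducible unitary representations of $\simil$ that we have just set up: since $\pi_0$ is (up to isomorphism) the unique infinite-dimensional irreducible unitary representation of $\simil$ appearing here, an antilinear involution $I$ commuting with the $\simil$-action is essentially a real structure on this irreducible, and such structures are unique up to a phase once one exists. Concretely, first I would compose $I$ with the known antilinear involution $I_0$ to obtain a \emph{linear} bounded operator $T=I\circ I_0$ on $\LLl$ that commutes with $\pi_0(S_{\lambda,v,A})$ for all $\lambda,v,A$. Then by Schur's lemma (the representation $\pi_0$ is irreducible, which is classical and follows from the Mackey-type analysis already invoked in the proof of Proposition~\ref{gorbul}, or can be checked directly since even the restriction to $\R^l$ together with the dilation action leaves no proper closed invariant subspace), $T$ is a scalar: $T=c\cdot\Id$ for some $c\in\C$. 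Hence $I = c\,I_0$.

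Next I would pin down the scalar $c$. Since $I$ is an involution, $I^2=\Id$; and $I_0^2=\Id$ as well, because $(I_0^2 f)(y)=\overline{(I_0 f)}(-y)=\overline{\overline{f(y)}}=f(y)$. Writing $I=c\,I_0$ and using that $I_0$ is antilinear, we get $I^2 f = c\,I_0(c\,I_0 f) = c\,\overline{c}\,I_0^2 f = |c|^2 f$, so $|c|^2=1$, i.e.\ $c=e^{i\vartheta}$ for some $\vartheta\in\R$. This gives $I=e^{i\vartheta}I_0$, which is exactly the claim. I should also check at the outset that $I_0$ itself genuinely commutes with the whole $\simil$-action, so that the reduction to the linear operator $T$ is legitimate: one computes $I_0(\pi_0(S_{\lambda,v,A})f)(y) = \overline{(\pi_0(S_{\lambda,v,A})f)(-y)} = \lambda^{l/2} e^{i\langle y, v\rangle}\overline{f(-\lambda A^{-1}y)} = \lambda^{l/2}e^{i\langle y,v\rangle}(I_0 f)(\lambda A^{-1}y) = \pi_0(S_{\lambda,v,A})(I_0 f)(y)$, using that $\overline{e^{i\langle -y, v\rangle}} = e^{i\langle y, v\rangle}$; so indeed $I_0 \pi_0 = \pi_0 I_0$.

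The one genuine input that needs to be stated carefully is the irreducibility of $\pi_0$ (so that Schur applies and forces $T$ scalar). The representation $\pi_0$ restricted to $\R^l$ is, via the Fourier transform, multiplication operators on $\LLl$, so the $\R^l$-invariant closed subspaces are exactly the subspaces of functions supported on a fixed measurable set $S\subseteq\R^l$; the dilation and rotation parts of $\simil$ then force $S$ (up to null sets) to be invariant under all of $\simil$ acting on $\R^l$, hence $S$ is null or co-null, proving irreducibility. Alternatively one simply cites the Mackey machine already used above, where $\pi_0$ corresponds to the orbit $\R^l\setminus\{0\}$ with trivial little-group representation, which is irreducible. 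This is the only place one has to be even slightly careful; the rest is the Schur's-lemma argument and the elementary $|c|=1$ computation above, so I expect no real obstacle.
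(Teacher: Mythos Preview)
Your proof is correct and follows essentially the same route as the paper: both write the unknown antilinear involution as a complex-linear operator times $I_0$, show that this linear operator lies in the commutant of $\pi_0$ and hence is scalar, and then use $I^2=\Id$ to force the scalar onto the unit circle. The only packaging difference is that the paper computes the commutant directly (any bounded operator commuting with the $\R^l$-action is multiplication by some $h\in L^\infty(\R^l)$, by the commutative Murray--von Neumann theorem, and then the dilation/rotation part forces $h$ constant), whereas you phrase the same computation as ``$\pi_0$ is irreducible, hence Schur''; your justification of irreducibility unwinds to exactly the paper's argument. One small slip: in the model used here the $\R^l$-action is \emph{already} by multiplication operators $f\mapsto e^{i\langle \cdot,v\rangle}f$, so no Fourier transform is needed to identify the $\R^l$-invariant subspaces with $L^2(S)$.
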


\begin{proof}
The map $I_{0}$ defined above is obviously an involution satisfying the conditions of the lemma. We want to prove that this is the only one, up to multiplication by a complex number of modulus $1$. If $I$ is another such involution, we can write $I=J\circ I_{0}$ where $J$ is complex-linear and commutes with the action of $\simil$. But any bounded complex linear operator which commutes with the action of $\R^{\ell}\triangleleft \simil$ is the multiplication by a function $h\in {\rm L}^{\infty}(\R^{\ell})$; indeed, the von Neumann algebra generated by the multipliers $e^{i\langle \cdot,v\rangle}$ where $v$ ranges over $\R^{\ell}$ is ${\rm L}^{\infty}(\R^{\ell})$, which is maximal abelian in the algebra of operators on ${\rm L}^{2}(\R^{\ell})$ (see e.g.~\cite[III.1]{Takesaki_I}). Hence:
$$(If)(y)=h(y)\overline{f(-y)}.$$\noindent The fact that $I$ commutes with the action of $\simil$ implies that $h$ is constant almost everywhere and the condition $I^{2}=\Id$ implies that $\vert h\vert^{2}=1$ almost everywhere. This completes the proof of the lemma.
\end{proof}

Now, for each real number $\vartheta$, the fixed point set of $e^{i\vartheta}I_{0}$ is isomorphic (as an orthogonal representation of $\simil$) to the fixed point set of $I_{0}$, via the map $f\mapsto e^{i\frac{\vartheta}{2}}f$. This implies that any two orthogonal representations of $\simil$ having $\pi_{0}$ as complexification are isomorphic. The cocycle $c$ defined above take its values in the fixed point set of $I_{0}$ and the fact that ${\rm H}^{1}(\simil,\chi_{t}\otimes \pi_{0})$ has dimension $1$ implies that ${\rm H}^{1}(\simil,\chi_{t}\otimes \pi)$ has dimension $1$ also. This proves that, up to isomorphism, there is a unique representation \mbox{$\alpha_{t} \colon  \simil \to {\rm Sim}(E)$} satisfying the condition of the beginning of the paragraph. 

\medskip
As explained earlier, this concludes the proof of Theorem~\ref{class}, up to the existence of the representations $\varrho_{t}$ that will be discussed in Section~\ref{princeseries}.

\section{The principal series and actions on hyperbolic spaces}\label{princeseries}

To construct the family of representations $\varrho_{t}$, we first need to recall a few well-known facts from representation theory (which can be found, for instance, in~\cite{johnsonwallach,knapp,sally1,sally2}). 

\subsection{Preliminaries from representation theory}\label{prelim-princeseries}

We continue to use the notation from paragraph~\ref{parabolic}. Let $o=(1,0,\ldots , 0)\in \HH$ and denote by $K$ its stabilizer in $\Isomn$. We denote by ${\rm Vol}$  the unique normalized $K^{\circ}$-invariant volume form on $\partial \HH$. Let ${\rm Jac}(g)(\cdot)$ be the Jacobian of $g\in \Isomn$ for the volume form ${\rm Vol}$; i.e. $(g^{\ast}{\rm Vol})_{b}={\rm Jac}(g)(b){\rm Vol}_{b}$ for $b\in \partial \HH$. If one thinks of ${\rm Vol}$ as a measure, $|{\rm Jac}(g)|=dg^{-1}_{\ast}{\rm Vol}/d{\rm Vol}$. Since we have $\vert {\rm Jac}(g)\vert =\vert {\rm Jac}(kg)\vert$ for $k\in K$, the function $\vert {\rm Jac}(g^{-1})(b)\vert$ descends to $\HH \times \partial \HH$; this is the {\it Poisson kernel}. One has (see~\cite{johnsonwallach}):

\begin{equation}\label{eqnjacobian}
\vert {\rm Jac}(g^{-1})(b)\vert =\left(\frac{B(o,b)}{B(g(o),b)}\right)^{n-1}.
\end{equation}

\noindent There is a slight abuse of notation in the formula above: on the left-hand side $b$ represents a point of the boundary of $\HH$ i.e. a $B$-isotropic line in $\R^{n+1}$. To compute the right-hand side, we can choose an isotropic vector representing the line $b$.

The {\it (spherical) principal series} is the family of representations $\pi_{s}$ of $\Isomn$ on the space ${\rm L}^{2}(\partial \HH)$, parametrized by a complex number $s$ and defined by: 
$$\pi_{s}(g)\cdot f= \vert {\rm Jac}(g^{-1})\vert^{\frac{1}{2}+s}f\circ g^{-1}.$$

\begin{rem}
Here ${\rm L}^{2}(\partial \HH)$ stands for the space of {\it complex-valued}, measurable, square-integrable function classes on $\partial \HH$ with respect to~$\mathrm{Vol}$, although we will soon restrict to {\it real-valued} functions, in which case we write ${\rm L}^{2}(\partial \HH, \R)$. Note that when $s$ is real, the operators $\pi_{s}(g)$ preserve the space ${\rm L}^{2}(\partial \HH, \R)$.
\end{rem}

\noindent If $f_{1},f_{2}\in {\rm L}^{2}(\partial \HH)$, we will write $(f_{1},f_{2})=\int_{\partial \HH}f_{1}\overline{f_{2}}$. A simple calculation shows that $(\pi_{s}(f_{1}),\pi_{-\overline{s}}(f_{2}))=(f_{1},f_{2})$; in other words the representations $\pi_{s}$ and $\pi_{-\overline{s}}$ are {\it dual}. It is known (see for instance~\cite{wallach}) that the representation $\pi_{s}$ is irreducible whenever \mbox{$s\neq \pm (\frac{1}{2}+\frac{k}{n-1})$} for all $k\in \mathbf{N}$.

Before going further, let us introduce a second model for the representations $\pi_{s}$. We will need to use a $KAN$ decomposition for the group $\Isomn$. We still consider the two isotropic vectors $\xi_{1}$ and $\xi_{2}$ considered in paragraph~\ref{parabolic}:
$$\xi_{1}=\frac{1}{\sqrt{2}}(1,1,0,\ldots),$$
$$\xi_{2}=\frac{1}{\sqrt{2}}(1,-1,0,\ldots),$$

\noindent and we still denote by $P$ the stabilizer of the line $\R \xi_{1}$ in $\Isomn$. Elements of $P$ are denoted by $g_{\lambda,v,A}$ where $\lambda \in \R_{+}^{\ast}$, $v\in \R^{n-1}$ and $A\in \mathrm{O}(n-1)$. Every element $g$ of $\Isomn$ can be written uniquely as a product:
$$g=k\cdot g_{\lambda,v,\Id},$$  

\noindent (where of course $k$, $\lambda$ and $v$ are functions of $g$ but we will not indicate it in the notation).

\medskip
Let $M$ be the centralizer of the $1$-parameter group $\{g_{\lambda,0,\Id}\}_{\lambda}$ in $K$. We can now define the second model for $\pi_{s}$.  Let $V_{s}$ be the space of measurable function classes $$F \colon  \Isomn \lra \C$$ 

\noindent such that: 
\begin{itemize}
\item $F(gmg_{\lambda,v,\Id})=F(g)\lambda^{-(\frac{1}{2}+s)(n-1)}$, for $g\in \Isomn$, $m\in M$, $\lambda\in \R_{+}^{\ast}$ and $v\in \R^{n-1}$,
\item the restriction of $F$ to $K$ is square-integrable. 
\end{itemize}
Denote by $dk$ the normalized Haar measure on $K$. The space $V_{s}$, endowed with the norm $\vert \cdot \vert$ defined by
$$\vert F\vert^{2}=\int_{K}\vert F(k)\vert^{2}dk,$$
\noindent is a Hilbert space. The group $\Isomn$ acts on it by translation; the element $g\in \Isomn$ acts by:
$$(g\cdot F)(h)= F (g^{-1}h).$$

\noindent The following proposition relates the previous representations.

\begin{prop}\label{identification}
The representation $\pi_{s}$ is isomorphic to the representation of $\Isomn$ on $V_{s}$. 
\end{prop}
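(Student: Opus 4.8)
The plan is to exhibit an explicit $\Isomn$-equivariant isometry between $\mathrm{L}^{2}(\partial\HH)$ (with the $\pi_s$-action) and $V_s$ (with the left-translation action), using the $KAN$-decomposition to realize functions on the boundary as functions on the group that are appropriately homogeneous under the right $P$-action. First I would identify $\partial\HH$ with $K/M$: indeed, $K$ acts transitively on $\partial\HH$ and the stabilizer of the basepoint boundary line $\R\xi_1$ inside $K$ is exactly $M$, so $\mathrm{Vol}$ corresponds (after normalization) to the $K$-invariant probability measure on $K/M$, and $\mathrm{L}^{2}(\partial\HH)\cong \mathrm{L}^{2}(K/M)$ as $K$-representations. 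The map I want sends $f\in\mathrm{L}^{2}(\partial\HH)$ to the function $F_f$ on $\Isomn$ defined, using the decomposition $g=k\cdot g_{\lambda,v,\Id}$, by $F_f(g)=\lambda^{-(\frac12+s)(n-1)}f(k\cdot b_0)$, where $b_0=[\R\xi_1]$; one checks this is well-defined (the $KAN$-decomposition of $g$ is unique up to replacing $k$ by $km$ and correspondingly adjusting, and $f(km\cdot b_0)=f(k\cdot b_0)$ since $M$ fixes $b_0$), lies in $V_s$ by construction, and that $\|F_f\|^2=\int_K|F_f(k)|^2\,dk=\int_K|f(k\cdot b_0)|^2\,dk=\int_{\partial\HH}|f|^2\,\mathrm{Vol}$, so the map is a surjective isometry (surjectivity because any $F\in V_s$ is determined by its restriction to $K$, which descends to $K/M$ by the homogeneity relation, hence to a function on $\partial\HH$).

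The heart of the matter is then the \emph{equivariance computation}: one must verify that the left-translation action on $V_s$ pulls back to exactly $\pi_s(g)f=|\jac(g^{-1})|^{\frac12+s}f\circ g^{-1}$. Concretely, for $g,h\in\Isomn$ write $h=k\cdot g_{\lambda,v,\Id}$ and $g^{-1}h=k'\cdot g_{\lambda',v',\Id}$; then $(g\cdot F_f)(h)=F_f(g^{-1}h)=(\lambda')^{-(\frac12+s)(n-1)}f(k'\cdot b_0)$, and I need to show this equals $F_{\pi_s(g)f}(h)=\lambda^{-(\frac12+s)(n-1)}\big(|\jac(g^{-1})|^{\frac12+s}f\circ g^{-1}\big)(k\cdot b_0)$. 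The point is that $k'\cdot b_0 = g^{-1}k\cdot b_0 = g^{-1}\cdot(k\cdot b_0)$ (since $g_{\lambda,v,\Id}$ fixes $b_0$, so $h\cdot b_0 = k\cdot b_0$ and likewise $g^{-1}h\cdot b_0 = k'\cdot b_0$), which takes care of the $f$-part; it remains to show the ratio $(\lambda/\lambda')^{(\frac12+s)(n-1)}$ equals $|\jac(g^{-1})(k\cdot b_0)|^{\frac12+s}$, i.e.\ that $(\lambda'/\lambda)^{n-1}=|\jac(g^{-1})(k\cdot b_0)|$. This is precisely the statement that the $AN$-component $\lambda$ in the $KAN$-decomposition of $g$ transforms like (a power of) the Poisson kernel under left translation — which is the content of formula~(\ref{eqnjacobian}): writing $b=k\cdot b_0$, one computes $B(o,b)/B(g(o),b)$ in terms of the $\lambda$-components using that $o$ is the $K$-fixed point, so $K$ acts by isometries fixing $o$, and $g_{\lambda,v,\Id}$ acts on $\xi_1$ by the scalar $\lambda$; the cocycle identity $\jac(g_1g_2)=\jac(g_1)\circ g_2\cdot\jac(g_2)$ matched against the multiplicativity of the $\lambda$-component under the decomposition gives exactly what is needed.

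The main obstacle I anticipate is not conceptual but \emph{bookkeeping}: keeping the various normalizations consistent (the exponent $(\frac12+s)(n-1)$ versus $\frac12+s$, the power $n-1$ in the Poisson kernel, the choice of which isotropic vector represents a boundary line when evaluating $B(o,b)/B(g(o),b)$, and the conventions $g=k\cdot g_{\lambda,v,\Id}$ versus $g = g_{\lambda,v,\Id}\cdot k$), together with carefully tracking how $g^{-1}$ versus $g$ enters so that the cocycle goes in the right direction. A clean way to organize this is to note that, by a density/continuity argument, it suffices to check equivariance for $g\in K$ (where everything is obvious: $\lambda'=\lambda$, $\jac$ is a unitary cocycle on $K$ after normalization, and the map is manifestly $K$-equivariant) and for $g=g_{\mu,w,\Id}\in P$ (or even just for $g$ in the one-parameter group $\{g_{\mu,0,\Id}\}$ together with the unipotent part), since $K$ and $P$ generate $\Isomn$; on $P$ the decomposition $g^{-1}h$ is computed by an explicit $2\times2$-block matrix manipulation and the Jacobian formula~(\ref{eqnjacobian}) reduces to the already-recorded transformation rule for $\xi_1$. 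This reduces the global identity to two local computations, each short. Once equivariance is established on the generating set $K\cup P$ and the map is known to be an isometric bijection, the proposition follows.
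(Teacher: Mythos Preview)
Your proposal is correct and follows essentially the same route as the paper: identify $\partial\HH\cong K/M$, build the isometry $f\mapsto F_f$ via the $KAN$-decomposition, and verify equivariance by computing the Iwasawa decomposition of $g^{-1}k$ and invoking the Jacobian formula~(\ref{eqnjacobian}). The paper streamlines your computation in one small way: since any $F\in V_s$ is determined by its restriction to $K$, it suffices to check the intertwining identity at $h=k\in K$ rather than at a general $h=k\cdot g_{\lambda,v,\Id}$; this removes the extra $\lambda$-factor from both sides and leads directly to the single identity $|\jac(g^{-1})(b)|^{\frac12+s}=(\lambda')^{-(\frac12+s)(n-1)}$, so the reduction to generators $K\cup P$ you propose (while valid) is not needed. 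Note also a small slip in your bookkeeping: from $(\lambda/\lambda')^{(\frac12+s)(n-1)}=|\jac(g^{-1})(k\cdot b_0)|^{\frac12+s}$ one gets $(\lambda/\lambda')^{n-1}=|\jac(g^{-1})(k\cdot b_0)|$, not $(\lambda'/\lambda)^{n-1}$.
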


\begin{proof}

Note that the map $k\mapsto [k(\xi_{1})]$ induces an identification between $K/M$ and $\partial \HH$. If $F\in V_{s}$, its restriction to $K$ descends to a square-integrable function on $K/M\simeq \partial \HH$. This defines a map from $V_{s}$ to $\LLB$ which is easily seen to be a surjective isometry. We will denote by
$$f\in \LLB \longmapsto F_{f}\in V_{s}$$
the inverse of this isometry. The point of the proof is to check that the action of $\Isomn$ on $V_{s}$ by  translations identifies with the representation $\pi_{s}$ under this isomorphism. This can be seen as follows. Let $f\in \LLB$; we will temporarily write $g\cdot f$ for the function which corresponds to $F_{f}(g^{-1}\cdot )$ under the isomorphism above. We want to check that $g\cdot f=\pi_{s}(g)(f)$. Let $b=[k\xi_{1}]$ be a point in the boundary of $\HH$. Let
$$g^{-1}k=k'g_{\lambda',v',\Id}$$
be the Iwasawa decomposition of $g^{-1}k$. A simple computation using Equation~(\ref{eqnjacobian}) shows that $\vert {\rm Jac}(g^{-1})(b)\vert^{\frac{1}{2}+s}=(\lambda')^{-(\frac{1}{2}+s)(n-1)}$. One then has:
$$\begin{array}{rcl}
(g\cdot f)(b) & = & F_{f}(g^{-1}k)\\
 & = & F_{f}(k')(\lambda')^{-(\frac{1}{2}+s)(n-1)}\\
 & = & \vert {\rm Jac}(g^{-1})(b)\vert^{\frac{1}{2}+s}f(g^{-1}b),\\
\end{array}$$
where, in the last step, we used the fact that $g^{-1}b=[g^{-1}k\xi_{1}]$ is equal to $[k'\xi_{1}]$. This proves that $g\cdot f =\pi_{s}(g)(f)$, as desired.
\end{proof}

From now on, we will assume that $s$ is a real number. We seek to construct a sesquilinear form on the space ${\rm L}^{2}(\partial \HH)$, invariant by the representation $\pi_{s}$. Since $\pi_{s}$ and $\pi_{-s}$ are dual, it is enough to construct an operator $L_{s}$ of $\LLB$ which intertwines the representations $\pi_{s}$ and $\pi_{-s}$, i.e. which satisfies:  
$$L_{s}\circ \pi_{s}(g)=\pi_{-s}(g)\circ L_{s}.$$

\noindent
It will be convenient to use both $\LLB$ and the model $V_{s}$ to describe the operator $L_{s}$. We first give a ``formal" definition which will then be justified by Proposition~\ref{prop:Ls}, as follows. If $F\in V_{s}$ and $F$ is continuous, we consider the function $L_{s} F$ defined by:
\begin{equation}\label{eq:Ls}
L_{s}F(x)=\frac{1}{\kappa(n,s)} \int_{\R^{n-1}}F(x g_{1,v,Id}\sigma)dv,
\end{equation}
\noindent where $dv$ stands for Lebesgue integration on $\R^{n-1}$, $\sigma$ is the involution defined in~(\ref{involution}) and $\kappa(n,s)>0$ is a normalization constant determined as follows. Viewing $L_s$ as an operator on $\LLB$, it is straightforward to check that $L_s$ maps the constant function $\boldsymbol{1}_{\partial\HH}$ to a (positive) constant function; choose $\kappa(n,s)$ such that this constant is one. One could, but probably shouldn't, compute
$$\kappa(n,s) = \frac{\Gamma\big((n-1)s\big)}{\Gamma\big((n-1)(s+\frac12)\big)}(2\pi)^{\frac{n-1}2} .$$
\medskip

The following proposition is classical; the reader will find its proof in (for instance)~\cite{vergne}, Proposition~1 page~91.

\begin{prop}
\label{prop:Ls}
Assume that $s>0$. Then if $F\in V_{s}$ is continuous, the integral defining $L_{s} F$ is convergent and defines a continuous function in $V_{-s}$.  Moreover, the operator \mbox{$L_{s} \colon V_{s}\to V_{-s}$} is continuous.\qed
\end{prop}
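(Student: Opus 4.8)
The plan is to reduce everything to the classical intertwining operator on the nilpotent group $N = \{g_{1,v,\Id}\}$, i.e. to understand $L_s f(x) = \int_{\R^{n-1}} f(x g_{1,v,\Id}\sigma)\,dv$ as a convolution-type operator against a homogeneous kernel. First I would use the $KAN$ decomposition and the equivariance property defining $V_s$ to rewrite the integrand: for $f \in V_s$ continuous, the function $v \mapsto f(x g_{1,v,\Id}\sigma)$ is controlled by the $V_s$-homogeneity condition $F(gmg_{\lambda,v,\Id}) = F(g)\lambda^{-(\frac12+s)(n-1)}$. Writing $g_{1,v,\Id}\sigma$ in its Iwasawa form $k(v) g_{\lambda(v),w(v),\Id}$, one computes (this is the Bruhat-cell computation, cf.\ the formula $\eta = \tfrac{2}{\lambda^2\mu|v|^2}$ appearing earlier in the paper, or directly from the action of $\sigma$ on $\R\xi_1\oplus\R\xi_2\oplus U$) that $\lambda(v)$ is comparable to $|v|^{-2}$ as $|v|\to\infty$ and to a constant near $v=0$. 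Hence $|f(x g_{1,v,\Id}\sigma)| \le \|f\|_{\infty,K}\,\lambda(v)^{(\frac12+s)(n-1)} \sim |v|^{-(1+2s)(n-1)}$ for large $|v|$. Since $s>0$, the exponent exceeds $n-1$ and the integral over $\{|v|\ge 1\}$ converges; near the origin the integrand is bounded, so the integral over $\{|v|\le 1\}$ is trivially finite. This gives the convergence claim.

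Next I would check that $L_s f$ lies in $V_{-s}$, i.e. satisfies the correct homogeneity and restricts to a square-integrable function on $K$. The homogeneity $L_s f(x m g_{\mu,u,\Id}) = L_s f(x)\,\mu^{-(\frac12 - s)(n-1)}$ follows by the standard change of variables in the $dv$-integral: right translation by $g_{\mu,u,\Id}$ can be absorbed by substituting $v \mapsto \mu^{-1}A^{-1}(v - \ldots)$ or similar (the precise substitution is dictated by the group law of $P\cong \mathrm{Sim}(\R^{n-1})$ together with the commutation $g_{\mu,0,\Id}\,\sigma = \sigma\, g_{\mu^{-1},0,\Id}$), and the Jacobian $\mu^{n-1}$ combines with the $V_s$-weight of $f$ to produce exactly the $V_{-s}$-weight. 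One also checks continuity of $L_s f$ by dominated convergence, using the integrable majorant just constructed (which is locally uniform in $x$).

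Finally, for boundedness of $L_s\colon V_s \to V_{-s}$, I would first note that $L_s$ is $\Isomn$-equivariant by construction (right translation by a fixed element commutes with the left action), so it suffices to bound $\|L_s f\|$ by $\|f\|$ on a dense set and then invoke a closed-graph / uniform-boundedness argument; alternatively, and more concretely, one estimates $\|L_s f\|_{L^2(K)}$ directly. Using the Cauchy--Schwarz inequality against the homogeneous kernel $k(v) = \lambda(v)^{(\frac12+s)(n-1)} \in L^1(\R^{n-1})$ (integrability again being exactly the content of $s>0$), and unfolding via the $K/M \cong \partial\HH$ identification so that the $V_s$ inner product becomes the $L^2(\partial\HH)$ inner product, one gets $\|L_s f\| \le \|k\|_{L^1}\,\|f\|$ by Minkowski's integral inequality (Young's inequality for the convolution structure on $N$). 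I expect the main obstacle to be purely computational: carrying out the Bruhat decomposition of $g_{1,v,\Id}\sigma$ cleanly enough to pin down the asymptotics of $\lambda(v)$ and to verify the $V_{-s}$-homogeneity transformation law with the correct exponent; once the kernel is identified as an explicit $L^1$ function homogeneous of the right degree, convergence and continuity are formal consequences. Since the proposition is stated to be classical with a reference to~\cite{vergne}, I would in the write-up simply indicate these points and defer the detailed Bruhat computation to that reference.
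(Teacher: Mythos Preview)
The paper gives no proof of this proposition at all; it simply declares it classical and refers to Vergne. Your sketch is the standard argument (Iwasawa--Bruhat decomposition of $g_{1,v,\Id}\sigma$, homogeneity of $f\in V_s$ to identify an explicit $L^1$ kernel, change of variables for the $V_{-s}$ transformation law, Minkowski's integral inequality for the operator bound), and it is the argument one finds in the cited reference. In that sense your proposal and the paper's ``proof'' agree.

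One computational slip to flag, since you correctly anticipated that the Bruhat computation is the only real obstacle. A direct calculation (apply $g_{1,v,\Id}\sigma$ to $\xi_1$ and compute the Euclidean norm, using that $K$ fixes $e_1$ and acts orthogonally on $e_1^\perp$) gives
\[
\lambda(v)\ =\ 1+\tfrac12|v|^2,
\]
so $\lambda(v)\sim |v|^{2}$ at infinity, not $|v|^{-2}$ as you wrote. Correspondingly, the $V_s$-homogeneity produces the exponent $\lambda(v)^{-(\frac12+s)(n-1)}$, with a minus sign, not the plus sign in your bound. Your two sign errors cancel and the final decay $|v|^{-(1+2s)(n-1)}$ is correct, so nothing downstream breaks; but the formula $\eta=2/(\lambda^2\mu|v|^2)$ you invoke from Section~\ref{parabolic} concerns a different product of three $\sigma$'s and is not the Iwasawa datum you need here. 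Everything else in your outline (the change of variables $v'=\mu(u+v)$ with Jacobian $\mu^{n-1}$ combining with $\mu^{(\frac12+s)(n-1)}$ from the relation $g_{\mu,0,\Id}\sigma=\sigma g_{\mu^{-1},0,\Id}$ to yield exactly the $V_{-s}$ weight $\mu^{-(\frac12-s)(n-1)}$, and the Minkowski bound $\|L_s f\|_{L^2(K)}\le \|f\|_{L^2(K)}\int\lambda(v)^{-(\frac12+s)(n-1)}dv$ using right-$K$-invariance of Haar measure) is correct as stated.
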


From now on, and in view of this result, we will always assume that $s>0$. Note that, by its very definition, the operator $L_{s}$ intertwines the actions of $\Isomn$ on $V_{s}$ and $V_{-s}$: an element $g\in \Isomn$ acts by left translation and the integral defining $L_{s}$ is a convolution on the right. Viewing $L_{s}$ as an operator from $\LLB$ to itself, it intertwines $\pi_{s}$ and $\pi_{-s}$ according to Proposition~\ref{identification}. We can thus define an invariant bilinear form $\langle \cdot , \cdot \rangle_{s}$ on $\LLB$ by
$$\langle f,g\rangle_{s}=\int_{\partial \HH}f\overline{L_{s}g}.$$
Observe that the normalization in~\eqref{eq:Ls} implies $\langle \boldsymbol{1}_{\partial\HH}, \boldsymbol{1}_{\partial\HH} \rangle_{s} =1$.

\smallskip

To continue further, we need to know whether the quadratic form $\langle \cdot , \cdot \rangle_{s}$ is of finite index and how its signature depends on $s$ (here by the {\it index} of a quadratic form we mean the maximal dimension of an isotropic subspace). This result is known and due to Johnson and Wallach~\cite{johnsonwallach} (for $n\ge 3$) and Sally~\cite{sally1,sally2} (for $n=2$); see also~\cite{takahashi}. Intertwining operators such as $L_{s}$ have also been studied for semisimple groups of higher rank, see~\cite{knapp,knapp0}.

\subsection{Invariant quadratic forms and hyperbolic spaces}\label{qfhs}
As the reader will soon see, the natural parameter $s$ for the spherical principal series is cumbersome to deal with geometric quantities. We shall freely use the following change of variable:

\begin{nota}
From now on, we write $t=(n-1)(s-\frac{1}{2})$.
\end{nota}

Let
$$\LLB=\bigoplus_{k=0}^{\infty}H^{k}$$
be the decomposition of $\LLB$ into $K$-irreducible components. Recall that, in the Klein model for $\HH$, where $\partial \HH \cong \bS^{n-1}$ is identified with the unit sphere in $\R^{n}$, the space $H^{k}$ is simply the space of (restrictions to the sphere of) harmonic homogeneous polynomials of degree $k$; recall that for $k\ge 2$, we have
\begin{equation}\label{eq:binom}
p_k := \dim H^k\ =\ \binom{n+k-1}{n-1} - \binom{n+k-3}{n-1}.  
\end{equation}
We also have $p_{0}:=\dim H^0=1$ and $p_{1}:= \dim H^1=n$.

\begin{rem} When $n=2$, it is more customary to decompose ${\rm L}^{2}(\partial \mathbf{H}^{2})$ into irreducible components for the group ${\rm SO}(2)$; in this case, each space $H^{k}$ ($k>0$) breaks into two invariant lines for ${\rm SO}(2)$. However, since we consider the action of the whole group $K={\rm O}(n)$, the spaces $H^{k}$ are irreducible, even in the case $n=2$.
\end{rem}

Since the action of $\pi_{s}(K)$ on $\LLB$ does not depend on $s$ and since $L_{s}$  commutes with this action, $L_{s}$ must preserve each of the subspaces $H^{k}$. Hence the restriction of $L_{s}$ to $H^{k}$ is the multiplication by a scalar which we will denote by $\lambda_{k}(s)$. In view of the normalization in~\eqref{eq:Ls}, we have $\lambda_{0}(s)=1$. It then follows that for $k\ge 1$ we have
\begin{equation}\label{valeurlambda}
\lambda_{k}(s)\ =\ \prod_{j=0}^{k-1}\frac{j+\frac{n-1}{2}-(n-1)s}{j+\frac{n-1}{2}+(n-1)s}\ =\ \prod_{j=0}^{k-1}\frac{j-t}{j+t+n-1},
\end{equation}
see~\cite{johnsonwallach} for $n\ge 3$ and~\cite{sally2} or~\cite{delzantpy} for $n=2$. From this formula one can easily compute the signature of $\langle \cdot , \cdot \rangle_{s}$. We write $s_{j}=\frac{j}{n-1}+\frac{1}{2}$ for the parameter $s$ corresponding to integer values $j\ge 0$ of $t$. When $s\in (0,s_{0})$, the bilinear form $\langle \cdot , \cdot \rangle_{s}$ is positive definite. When $s>s_{0}$ we have:
\begin{itemize}
\item if $s\in (s_{j},s_{j+1})$ with $j$ odd, $\langle \cdot , \cdot \rangle_{s}$ has index $\sum_{1\le k\le j;\, \text{$k$\,odd}}p_{k}$;
\item if $s\in (s_{j},s_{j+1})$ with $j$ even, $\langle \cdot , \cdot \rangle_{s}$ has index $\sum_{0\le k\le j;\, \text{$k$\,even}}p_{k}$.
\end{itemize}
Therefore, by~(\ref{eq:binom}), the possible values of the index of the form $\langle \cdot , \cdot \rangle_{s}$, when $s>s_{0}$, are exactly the numbers $\binom{n-1+j}{n-1}$ for $j\ge 0$.

\begin{exam}
When $n=2$, the index can be any positive integer.  
\end{exam}

\begin{exam}
When $n=3$, the possible indices are exactly the triangular numbers: $(j+1)(j+2)/2$ (with $j\ge 0$).
\end{exam}

The fact that the index is restricted by the formula $\binom{n-1+j}{n-1}$ will be discussed further in Section~\ref{higherrank}, see in particular Problem~\ref{prob:ranks} and Theorem~\ref{thm:gelf}.

\medskip

 For $s\in (s_{0},s_{1})$, the quadratic form has index~$1$ and the number $t$ ranges over the interval $(0,1)$; this is the parameter which appears in Theorem~\ref{class}. We will now use both the parameters $t$ and $s$ and always assume that $s\in (s_{0},s_{1})$. Observe that the coefficients $\lambda_{k}(s)$ are real (since $s$ is real), hence the quadratic form $\langle \cdot , \cdot \rangle_{s}$ is defined over $\mathbf{R}$. We can thus consider the subspace 
$${\rm L}^{2}(\partial \HH,\mathbf{R})\subset {\rm L}^{2}(\partial \HH)$$
of real-valued functions endowed with the form $\langle \cdot , \cdot \rangle_{s}$. Observe however that the space $\oplus_{k\ge 1}H^{k}$ endowed with the scalar product $-\langle \cdot , \cdot \rangle_{s}$ is {\it not} complete as shown by the next lemma. 

\begin{lemma}\label{tendverszero} The coefficients $\lambda_{k}(s)$ tend to $0$ as $k$ goes to infinity.
\end{lemma}
\begin{proof} The statement is equivalent to the divergence of the series $\sum_{j\ge 1}-\log(\frac{j-t}{j+t+n-1})$. But this series is equivalent to the harmonic series $\sum_{j\ge 1}\frac{1}{j}$, which diverges. 
\end{proof}

Hence the pair $({\rm L}^{2}(\partial \HH,\mathbf{R}),\langle \cdot , \cdot \rangle_{s})$ is {\it not} isomorphic to a pair $(\mathscr{H},B)$ as described in the introduction. However, if $V$ denotes the completion of the space $\oplus_{k\ge 1}H^{k}$ with respect to $-\langle \cdot , \cdot \rangle_{s}$, the group $\Isomn$ still acts on $H^{0}\oplus V$, preserving the extension of the form $\langle \cdot , \cdot \rangle_{s}$ to this space. Moreover, the linear action of $\Isomn$ on $H^{0}\oplus V$ is still irreducible. See for instance~\cite{delzantpy}, Section 2.1 and in particular Proposition~2 there, for more details. The arguments there are given for the case $n=2$ but work for all $n$. The need to complete the space ${\rm L}^{2}(\partial \HH,\R)$ will be illustrated in Proposition~\ref{ilfautcompleter}. 

From now on, we will denote by $\HHI_{t}$ the hyperbolic space associated to the pair $(H^{0}\oplus V,\langle \cdot , \cdot \rangle_{s})$ and by $\varrho_{t} \colon  \Isomn \to {\rm Isom}(\HHI_{t})$ the corresponding representation. The representation $\varrho_{t}$ is non-elementary and has no non-trivial, closed, totally geodesic invariant subspace.

Let $\ast$ be the point of $\HHI_{t}$ associated to the constant function~$1$. We record the following elementary observation:

\begin{lemma}\label{lemma:unique:k}
The point $\ast$ is the unique point of $\HHI_{t}$ fixed by $\mathrm{SO}(n)=K^\circ$ (a fortiori by $\mathrm{O}(n)=K$). Hence, the map $f_{t} \colon  \HH \to \HHI_{t}$ defined by
$$f_{t}(g\cdot o)=\pi_{s}(g)(\ast), \;\;\;\;\; (g\in \Isomn)$$
is the unique $\Isomn$-equivariant map from $\HH$ to $\HHI_{t}$.\end{lemma}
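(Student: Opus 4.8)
The plan is to prove the two assertions in turn: first that $\ast$ is the unique $K^\circ$-fixed point in $\HHI_t$, and then deduce that $f_t$ is well-defined and is the only equivariant map. For the first part, recall that $\HHI_t$ is the hyperbolic space attached to $(H^0\oplus V,\langle\cdot,\cdot\rangle_s)$, where $H^0$ is the line of constants and $V$ is the completion of $\bigoplus_{k\ge 1}H^k$ with respect to $-\langle\cdot,\cdot\rangle_s$. A point of $\HHI_t$ fixed by $K^\circ=\mathrm{SO}(n)$ corresponds to a positive vector $w$ with $\langle w,w\rangle_s=1$ that is fixed by $\pi_s(K^\circ)$, up to positive scalar. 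Since the $K$-action on $\LLL$ is independent of $s$ and decomposes as $\bigoplus_k H^k$ with each $H^k$ an irreducible non-trivial $\mathrm{O}(n)$-representation for $k\ge 1$ (as noted in the remark following~(\ref{eq:binom})), the space of $K^\circ$-invariant vectors is exactly $H^0$: indeed for $k\ge 1$ the representation $H^k$ of $\mathrm{SO}(n)$ has no nonzero invariant vector (harmonic homogeneous polynomials of positive degree are not $\mathrm{SO}(n)$-invariant, the only invariant polynomials being powers of $|x|^2$ which restrict to constants on the sphere). Hence the only $K^\circ$-fixed line in $H^0\oplus V$ is $H^0$ itself, and among its two unit vectors only $+1$ lies in the chosen sheet $B(v,e)>0$; this gives $\ast$ as the unique such point.

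For the second part, I would first check that $f_t$ is well-defined. Since $K$ is the stabilizer of $o$ in $\Isomn$, we have $g\cdot o=g'\cdot o$ iff $g^{-1}g'\in K$; then $\pi_s(g)(\ast)=\pi_s(g')(\ast)$ because $\pi_s(g^{-1}g')$ fixes $\ast$, $K^\circ$ doing so by the first part and all of $K=\mathrm{O}(n)$ doing so a fortiori (the $\mathrm{O}(n)$-invariant vectors still form $H^0$). So the assignment $f_t(g\cdot o):=\pi_s(g)(\ast)$ is unambiguous, and it is $\Isomn$-equivariant by construction: $f_t(h\cdot(g\cdot o))=f_t((hg)\cdot o)=\pi_s(hg)(\ast)=\varrho_t(h)(\pi_s(g)(\ast))=\varrho_t(h)(f_t(g\cdot o))$.

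Finally, for uniqueness: if $F\colon\HH\to\HHI_t$ is any $\Isomn$-equivariant map, then $F(o)$ is fixed by the stabilizer $K$ of $o$, hence by $K^\circ$, hence $F(o)=\ast$ by the first part. Equivariance then forces $F(g\cdot o)=\varrho_t(g)(F(o))=\pi_s(g)(\ast)=f_t(g\cdot o)$ for every $g$, so $F=f_t$ since $\Isomn$ acts transitively on $\HH$. The only real content is the first part, the identification of the $K^\circ$-invariants with $H^0$; the rest is a formal transitivity-and-stabilizer argument. The main (mild) obstacle is simply to be careful that $\ast$ is genuinely in $\HHI_t$ and on the correct sheet, i.e. that the constant function $1$ satisfies $\langle 1,1\rangle_s=\lambda_0(s)=1>0$ after the normalization fixed in~(\ref{valeurlambda}), which is exactly why the normalization $\lambda_0(s)=1$ was imposed.
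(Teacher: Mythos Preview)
Your proof is correct and follows the same strategy as the paper: identify the $K^\circ$-invariant line in $H^0\oplus V$ as $H^0$, then deduce the well-definedness and uniqueness of $f_t$ by the standard orbit--stabilizer argument. The paper's own argument for the first part is slightly slicker: rather than invoking the irreducibility of each $H^k$, it simply uses that $K^\circ=\mathrm{SO}(n)$ acts transitively on $\partial\HH$, so any $K^\circ$-invariant $L^2$-function is constant; it then notes explicitly that the completion $H^0\oplus V$ is isomorphic to $\LLB$ as a $K$-module, which is the one step you leave implicit when passing from invariants in $\bigoplus_k H^k$ to invariants in the completed space $V$.
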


\begin{proof}
There is only one $K^\circ$-invariant line in $\LLB$ by transitivity of the $K^\circ$-action on $\partial \HH$. This is still true in the completion $H^{0}\oplus V$ since it is isomorphic as a $K$-module to $\LLB$.
\end{proof}

Note that the map
$$(\lambda,v,t)\longmapsto f_{t}(g_{\lambda,v,\Id}\cdot o)$$
is smooth in all its variables simultaneously. Note also that the map $f_{t}$ is {\it harmonic} (see for instance~\cite{nishi} for the definition): if the tension field $\tau (f_{t})$ of $f_{t}$ was non-zero, it would be non-zero at every point and the geodesic tangent to the vector $\tau (f_{t})(o)$ at $o$ would be (pointwise) $K$-invariant, contradicting the fact that $K$ has a unique fixed point in $\HHI_{t}$. 

\bigskip
Let $g_{u}=g_{e^{u},0,\Id}$. We want to evaluate the distance between the point $$\pi_{s}(g_{u})(\ast)=f_{t}(g_{u}\cdot o)$$
and $\ast=f_{t}(o)$ in $\HHI_{t}$. This distance is given by the formula:
\begin{equation}\label{eq:distance}
\cosh(d(\pi_{s}(g_{u})(\ast),\ast))=\int_{\partial \HH}\jac(g_{u}^{-1})^{\frac{1}{2}+s}db
\end{equation}
where $db$ stands for the normalized $K$-invariant volume form on $\partial \HH$. We will denote by $I_{u}$ the integral above. The formula~(\ref{eqnjacobian}) for the Jacobian yields
\begin{equation}\label{eq:jaco}
\jac(g_{u}^{-1}) (b) = \big(\cosh(u)-b_{1}\sinh(u) \big)^{-(n-1)} \kern10mm (b\in \partial \HH)
\end{equation}
where $b_{1}$ stands for the first coordinate of $b\in \bS^{n-1}\subset \R^{n}$ under the identification of $\partial \HH$ with the unit sphere $\bS^{n-1}$ via the Klein model. Recalling that $t=(n-1)(s-\frac{1}{2})$, we obtain:
\begin{equation}\label{eq:Iu}
I_{u}:=\cosh(d(\pi_{s}(g_{u})(\ast),\ast))=\int_{\bS^{n-1}}(\cosh(u)-b_{1}\sinh(u))^{-(n-1+t)} db.
\end{equation}

Let us mention that in~\cite{delzantpy}, where the representations $\varrho_{t}$ were studied in the case $n=2$, the following formula was established: 
$${\rm cosh}(d(\varrho_{t}(a_{u})(\ast),\ast)=\frac{1}{2\pi}\int_{0}^{2\pi}\frac{d\vartheta}{\vert {\rm sinh}(u)e^{i\vartheta}+{\rm cosh}(u)\vert^{2(t+1)}}$$

\noindent where $a_{u}$ is the following element of ${\rm SU}(1,1)\subset {\rm Isom}(\mathbf{H}^{2})$

$$\left( \begin{array}{cc}
{\rm cosh}(u) & {\rm sinh}(u)\\
{\rm sinh}(u) & {\rm cosh}(u)\\
\end{array}\right).$$
This is coherent with Equation~(\ref{eq:Iu}): indeed a straightforward computation shows that the integral above equals:
$$\frac{1}{2\pi}\int_{0}^{2\pi}\frac{d\vartheta}{\vert {\rm cosh}(2u)+{\rm sinh}(2u)\cos (\vartheta)\vert^{1+t}}.$$ 
Remembering that $a_{u}$ has translation length $2\vert u\vert$ in $\mathbf{H}^{2}$, one recovers a formula of the same form as~(\ref{eq:Iu}).

\subsection{\texorpdfstring{Geometric properties of the representations $\varrho_t$}{Geometric properties of the representations}}

To complete the proof of Theorem~\ref{class} and to prove Theorem~\ref{curv}, we need to obtain estimates on the quantity $I_{u}$. We first deal with its behavior when $u$ goes to zero. We will prove the following proposition (already established in~\cite{delzantpy} when $n=2$).

\begin{prop}
We have, as $u$ goes to zero:
$$d(\pi_{s}(g_{u})(\ast),\ast)=\sqrt{\frac{t(t+n-1)}{n}} \cdot \vert u\vert+O(u^{2}).$$
\end{prop}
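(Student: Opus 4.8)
The plan is to extract the small-$u$ asymptotics of the integral
$$I_u=\int_{\bS^{n-1}}(\cosh(u)+b_1\sinh(u))^{-(n-1+t)}\,db$$
by Taylor-expanding the integrand to second order in $u$ and integrating term by term over the sphere. First I would set $\mu=n-1+t$ and write $\cosh(u)+b_1\sinh(u)=1+b_1 u+\tfrac12 u^2+O(u^3)$, so that
$$(\cosh(u)+b_1\sinh(u))^{-\mu}=1-\mu\big(b_1 u+\tfrac12 u^2\big)+\tfrac{\mu(\mu+1)}{2}b_1^2 u^2+O(u^3).$$
Integrating against the normalized measure $db$ and using $\int_{\bS^{n-1}}b_1\,db=0$ together with $\int_{\bS^{n-1}}b_1^2\,db=\tfrac1n$, I get
$$I_u=1-\tfrac{\mu}{2}u^2+\tfrac{\mu(\mu+1)}{2n}u^2+O(u^3)=1+\tfrac{\mu}{2}\Big(\tfrac{\mu+1}{n}-1\Big)u^2+O(u^3)
=1+\tfrac{\mu(\mu+1-n)}{2n}u^2+O(u^3).$$
Since $\mu=n-1+t$ we have $\mu+1-n=t$ and $\mu(\mu+1-n)=t(t+n-1)$, hence $I_u=1+\tfrac{t(t+n-1)}{2n}u^2+O(u^3)$.

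Next I would convert this into the distance estimate. Writing $d_u:=d(\pi_s(g_u)(\ast),\ast)$, we have $\cosh(d_u)=I_u=1+\tfrac{t(t+n-1)}{2n}u^2+O(u^3)$. Since $\cosh(d_u)-1=\tfrac12 d_u^2+O(d_u^4)$ and $d_u\to 0$ as $u\to 0$ (continuity of $u\mapsto f_t(g_u\cdot o)$ and $f_t(o)=\ast$), comparing the two expansions gives $\tfrac12 d_u^2=\tfrac{t(t+n-1)}{2n}u^2+O(u^3)$, so $d_u^2=\tfrac{t(t+n-1)}{n}u^2+O(u^3)$, and therefore
$$d_u=\sqrt{\tfrac{t(t+n-1)}{n}}\,|u|+O(u^2),$$
which up to the sign convention (one may take $u>0$) is the claimed formula.

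The only genuine point requiring care — and the main potential obstacle — is justifying the interchange of the Taylor expansion with the integral, i.e.\ controlling the remainder term uniformly in $b_1\in[-1,1]$. For $|u|$ small the base $\cosh(u)+b_1\sinh(u)$ stays in a fixed compact subinterval of $(0,\infty)$ independently of $b_1$ (it lies between $\cosh(u)-\sinh(u)=e^{-u}$ and $\cosh(u)+\sinh(u)=e^u$), so $x\mapsto x^{-\mu}$ together with its first three derivatives is bounded there uniformly in $b_1$; Taylor's theorem with Lagrange remainder then yields a bound $|R(u,b_1)|\le C u^3$ valid for all $b_1\in[-1,1]$ and all small $u$, and integrating this over the sphere of finite measure gives the $O(u^3)$ error in $I_u$. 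The rest is the elementary sphere integral $\int_{\bS^{n-1}}b_1^2\,db=1/n$ (by symmetry $\sum_{i=1}^n\int b_i^2\,db=\int|b|^2\,db=1$) and the scalar expansion of $\cosh$ and of $\operatorname{arccosh}$ near their base points, all of which are routine.
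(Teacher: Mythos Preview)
Your proof is correct and follows essentially the same route as the paper's: Taylor expand the integrand $(\cosh u + b_1\sinh u)^{-(n-1+t)}$ to second order in $u$, integrate over the sphere using $\int b_1\,db=0$ and $\int b_1^2\,db=1/n$, and read off the coefficient. The only noteworthy difference is in the last step: the paper invokes the implicit function theorem to justify writing $d_u=\lambda u+O(u^2)$ a priori and then matches $\lambda^2$, whereas you argue more directly from $\cosh(d_u)-1=\tfrac12 d_u^2+O(d_u^4)$ (using $d_u\to 0$, hence $d_u=O(u)$, hence $O(d_u^4)=O(u^4)$); your version is slightly more elementary and also makes the uniform-remainder justification explicit, which the paper leaves implicit.
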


We remark that it is not granted a priori that the displacement of~$\ast$ under $g_u$ should have an analytic expression in $u$ in infinite-dimensional spaces such as~$\HHI$; indeed this fails completely for actions on Hilbert spaces.

\begin{proof}
The expression $\left(\cosh(u)-b_{1}\sinh(u)\right)^{-(n-1+t)}$ admits a Taylor expansion at $u=0$, which to order~$3$ is
$$1+(t+n-1)b_{1}u-\frac{(t+n-1)}{2} u^{2} +\frac{(t+n-1)(t+n)}{2}b_{1}^{2}u^{2}+O(u^{3}).$$
Thus, after integrating,
\begin{equation}\label{eq:taylor}
\cosh(d(\pi_{s}(g_{u})(\ast),\ast)=1-\frac{(t+n-1)}{2} u^{2} +\frac{(t+n-1)(t+n)}{2}u^{2}c_{n}+O(u^{3}),
\end{equation}
where $c_{n}$ is the average of the function $b_{1}^{2}$ on the sphere, which is $c_n=1/n$ by an easy symmetry argument. On the other hand, we can write $d(\pi_{s}(g_{u})(\ast),\ast)=\lambda \vert u\vert +O(u^{2})$ for some constant $\lambda$, as follows e.g.\ from the existence of the expansion~(\ref{eq:taylor}) via the implicit function theorem. This implies that $$\cosh(d(\pi_{s}(g_{u})(\ast),\ast)=1+\frac{1}{2} \lambda^{2}u^{2}+O(u^{3}).$$
 By identification in the two expansions for $\cosh(d(\pi_{s}(g_{u})(\ast),\ast))$ we get $\lambda^{2}=t(t+n-1)/n$. This leads to the expression announced in the statement of the proposition.
\end{proof}

Since the two metrics $g_{hyp}$ and $f_{t}^{\ast}g_{hyp}$ on $\HH$ are proportional, the previous proposition implies that $f_{t}^{\ast}g_{hyp}=\frac{t(t+n-1)}{n}g_{hyp}$. In particular, the curvature of $\HH$ endowed with the metric $f_{t}^{\ast}g_{hyp}$ is equal to $-\frac{n}{t(t+n-1)}$.

\begin{prop}\label{asymptotic}
For $n$ and $t$ fixed, there is a constant $\kappa >0$ such that
$$\kappa \, e^{tu}\ \leq\ I_{u} \ \leq\ e^{tu}$$
for all $u\ge 0$. 
\end{prop}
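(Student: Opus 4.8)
The quantity to estimate is
$$I_u = \int_{\bS^{n-1}}\big(\cosh(u)+b_1\sinh(u)\big)^{-(n-1+t)}\,db,$$
and we want $\kappa e^{tu}\le I_u\le e^{tu}$ for $u\ge 0$. The plan is to integrate out all coordinates except $b_1$ and reduce to a one-dimensional integral, then split it into the region where $b_1$ is close to $-1$ (which produces the dominant $e^{tu}$ behaviour) and the complementary region (which contributes a lower-order, bounded amount). First I would write, using the standard formula for integration over $\bS^{n-1}$ in terms of the first coordinate, $I_u = c\int_{-1}^{1}(\cosh u + x\sinh u)^{-(n-1+t)}(1-x^2)^{(n-3)/2}\,dx$ with $c$ the appropriate normalizing constant so that $I_0=1$. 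The integrand's mass concentrates near $x=-1$ as $u\to\infty$, since there $\cosh u + x\sinh u \to e^{-u}/\cdots$, i.e.\ it is of size $e^{-u}$, so the integrand is of size $e^{(n-1+t)u}$ on a window of width $\sim e^{-2u}$, giving total size $e^{(n-3+t)u}$ — wait, one must be careful; let me instead substitute directly.

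The cleaner approach: substitute $y = \cosh u + x\sinh u$, so $x$ ranges over $[-1,1]$ as $y$ ranges over $[e^{-u}, e^{u}]$ (using $\cosh u - \sinh u = e^{-u}$, $\cosh u+\sinh u = e^u$), with $dx = dy/\sinh u$ and $1-x^2 = (1 - ((y-\cosh u)/\sinh u)^2) = ((y - e^{-u})(e^u - y))/\sinh^2 u$. Then
$$I_u = \frac{c}{\sinh u}\int_{e^{-u}}^{e^u} y^{-(n-1+t)}\,\frac{\big((y-e^{-u})(e^u-y)\big)^{(n-3)/2}}{(\sinh u)^{n-3}}\,dy = \frac{c}{(\sinh u)^{n-2}}\int_{e^{-u}}^{e^u} y^{-(n-1+t)}\big((y-e^{-u})(e^u-y)\big)^{(n-3)/2}\,dy.$$
For the \emph{upper bound} I would simply bound $(e^u - y)^{(n-3)/2}\le e^{u(n-3)/2}$ and $(y-e^{-u})^{(n-3)/2}\le y^{(n-3)/2}$ on the whole range, reducing the integral to $\int_{e^{-u}}^{\infty} y^{-(n-1+t)+(n-3)/2}\,dy$, which converges at infinity because the exponent $-(n-1+t)+(n-3)/2 = -(n+1)/2 - t < -1$, and evaluates to a constant times $(e^{-u})^{-(n-1)/2 - t}$; combined with the prefactor $(\sinh u)^{-(n-2)} \sim e^{-u(n-2)}$ this gives $I_u \le C e^{(n-1)/2\cdot u + t u} e^{-(n-2)u} \cdot(\text{const}) \le e^{tu}$ for an appropriate choice, after checking the constant; the constant $c$ was chosen to normalize $I_0 = 1$, and one can verify the inequality holds with that same $c$ by a monotonicity/convexity argument or by a crude comparison at $u=0$ combined with the fact that $e^{-tu}I_u$ is decreasing — indeed I expect $e^{-tu}I_u$ to be monotone decreasing, which gives $e^{-tu}I_u \le I_0 = 1$ immediately and is the slickest route for the upper bound.

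For the \emph{lower bound}, I would restrict the integral to a fixed proportion of the range near $y = e^{-u}$, say $y\in[e^{-u}, 2e^{-u}]$ (valid once $u$ is large enough that $2e^{-u} < e^u$, i.e.\ $u > \tfrac12\log 2$; small $u$ is handled separately by compactness and positivity of $I_u$). On this subinterval $y^{-(n-1+t)}\ge (2e^{-u})^{-(n-1+t)} = 2^{-(n-1+t)}e^{(n-1+t)u}$, $(e^u - y)^{(n-3)/2}\ge (e^u - 2e^{-u})^{(n-3)/2}\ge c_1 e^{u(n-3)/2}$, and $\int_{e^{-u}}^{2e^{-u}}(y - e^{-u})^{(n-3)/2}\,dy = c_2 (e^{-u})^{(n-1)/2}$; multiplying and combining with the prefactor $(\sinh u)^{-(n-2)}\ge c_3 e^{-(n-2)u}$ yields $I_u \ge \kappa e^{tu}$, tracking the powers of $e^u$: $-(n-2) + (n-1+t) + (n-3)/2 - (n-1)/2 = -(n-2) + (n-1) + t + (n-3)/2 - (n-1)/2 = 1 + t - 1 = t$. (When $n=2$ the exponent $(n-3)/2 = -1/2$ and the same computation goes through, with the integrals still converging — one should note $\int (y-e^{-u})^{-1/2}$ is integrable near the endpoint.)

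\textbf{Main obstacle.} The routine powers-of-$e^u$ bookkeeping is straightforward; the only genuine point of care is pinning down the constant in the upper bound so that it is exactly $1$ and not merely $O(1)$. The cleanest fix, as indicated above, is to prove that $u\mapsto e^{-tu}I_u$ is nonincreasing on $[0,\infty)$ — e.g.\ by differentiating under the integral sign in the representation $e^{-tu}I_u = \int_{\bS^{n-1}}\big(e^u\cosh u + e^u b_1\sinh u\big)^{-t}\big(\cosh u + b_1\sinh u\big)^{-(n-1)}\,db$, or more simply by noting $e^{-tu}(\cosh u + b_1\sinh u)^{-(n-1+t)}$ is, for each fixed $b_1\in[-1,1]$, a nonincreasing function of $u\ge 0$ (check: its logarithmic derivative is $-t - (n-1+t)\frac{\sinh u + b_1\cosh u}{\cosh u + b_1\sinh u}$, and since $\frac{\sinh u + b_1\cosh u}{\cosh u + b_1\sinh u}\ge \tanh u \ge 0$... one must verify the sign, handling $b_1$ near $-1$ where the ratio can be negative — actually the ratio equals $\frac{d}{du}\log(\cosh u + b_1\sinh u)$ and one checks it lies in $[-1,1]$, but it can be negative, so this pointwise monotonicity may fail near $b_1=-1$ and one falls back on differentiating the integral and using symmetry to control the sign). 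Thus the subtlety is exactly this constant; everything else is a direct two-sided comparison of the one-dimensional integral after the substitution $y = \cosh u + b_1\sinh u$.
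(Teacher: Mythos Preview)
Your lower bound is essentially the paper's argument in different coordinates: restricting to $y\in[e^{-u},2e^{-u}]$ corresponds to restricting to a spherical cap $b_1\in(-1,-1+O(e^{-2u}))$, and the power-counting is the same.

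The gap is exactly where you flag it: the upper bound with constant \emph{exactly}~$1$. Your crude estimate gives only $I_u\le Ce^{tu}$ (and in fact the bounds $(e^u-y)^{(n-3)/2}\le e^{u(n-3)/2}$ and $(y-e^{-u})^{(n-3)/2}\le y^{(n-3)/2}$ go the wrong way when $n=2$). Your fallback --- monotonicity of $u\mapsto e^{-tu}I_u$ --- is not established: you correctly observe that the pointwise logarithmic derivative $-t-(n-1+t)\frac{\sinh u+b_1\cosh u}{\cosh u+b_1\sinh u}$ can be positive near $b_1=-1$ (at $b_1=-1$ it equals $n-1>0$), so pointwise monotonicity fails, and ``differentiating the integral and using symmetry to control the sign'' is not an argument.

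The paper's device is the one you are missing, and it is a one-liner. Split the exponent:
\[
(\cosh u+b_1\sinh u)^{-(n-1+t)}=(\cosh u+b_1\sinh u)^{-t}\cdot(\cosh u+b_1\sinh u)^{-(n-1)}.
\]
Since $\cosh u+b_1\sinh u=e^{-u}+(1+b_1)\sinh u\ge e^{-u}$ for all $b_1\in[-1,1]$, the first factor is $\le e^{tu}$. The second factor is precisely $|\jac(g_u^{-1})(b)|$ by~(\ref{eq:jaco}), whose integral over $\bS^{n-1}$ is exactly~$1$. Hence $I_u\le e^{tu}$ with no further work. This is the missing idea; once you see it, your substitution $y=\cosh u+b_1\sinh u$ and the one-dimensional analysis are unnecessary for the upper bound.
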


\begin{proof}
We work with the expression~(\ref{eq:Iu}) for $I_u$ and begin with the upper bound. Since the term
\begin{equation}\label{eq:e+sinh}
\cosh(u)-b_1 \sinh(u) = e^{-u} + (1- b_1) \sinh(u)
\end{equation}
is~$\geq e^{-u}$, it suffices to prove that
$$\int_{\bS^{n-1}}(\cosh(u)-b_{1}\sinh(u))^{-(n-1)} db$$
remains bounded by one. But the above expression is exactly one since it is the integral of the Jacobian of~$g_{u}^{-1}$. For the lower bound, we shall only integrate~(\ref{eq:Iu}) over $1-e^{-2u}<b_1<1$. In that region, the identity~(\ref{eq:e+sinh}) yields
$$\cosh(u)- b_1 \sinh(u) \leq \frac32 e^{-u}$$
and thus the integrand is at least a constant times $e^{u(n-1+t)}$. An elementary computation shows that the cap of $\bS^{n-1}$ defined by $b_1>1-e^{-2u}$ has volume larger than $e^{-u(n-1)}$ times a constant depending only on $n$. The lower bound follows.
\end{proof}

Note that Proposition~\ref{asymptotic} immediately gives us the end of the proof of Theorem~\ref{class}. Since $\varrho_{t}$ preserves the type (Proposition~\ref{type}) and since all hyperbolic elements of $\Isomn$ are conjugate to an element of the form $\{g_{\lambda, 0, A}\}$, the functions $\ell_{\varrho_{t}}$ and $\ell_{\HH}$ are proportional. According to Proposition~\ref{asymptotic} the proportionality constant is equal to $t$. We now collect all the previous informations to obtain a proof of Theorem~\ref{curv}.

\begin{proof}[Proof of Theorem~\ref{curv}]
We have already seen that there is an $\Isomn$-equivariant map $f_{t} \colon  \HH \to \HHI_{t}$ which satisfies $f_{t}^{\ast}g_{hyp}=\frac{t(t+n-1)}{n}g_{hyp}$ (and is unique). We have also seen that $f_{t}$ is harmonic. Since the metric $f_{t}^{\ast}g_{hyp}$ is proportional to the hyperbolic metric on $\HH$, the harmonicity of $f_{t}$ is equivalent to the fact that the image of $f_{t}$ is a minimal sub-manifold, thus justifying the claim from Theorem~\ref{curv} (2). 

\smallskip
We now have to establish the first and third point in Theorem~\ref{curv}. Let $x$ and $y$ in $\HH$ and denote by $u$ the distance from $x$ to $y$. There exists and element $h\in \Isomn$ such that $x=h(o)$ and $y=hg_{u}(o)$ where $g_{u}=g_{e^{u},0,\Id}$ as in the previous paragraph. Hence $d:=d_{\HHI_{t}}(f_{t}(x),f_{t}(y))=d_{\HHI_{t}}(f_{t}(g_{u}(o)),f_{t}(o))=\cosh^{-1}(I_{u})$ where $I_{u}$ is the integral from the previous paragraph. Since $\frac{1}{2}e^{d}\le I_{u}\le e^{d}$, we deduce from Proposition~\ref{asymptotic} that
$$\log(\kappa)+td_{\HH}(x,y)\le d_{\HHI_{t}}(f_{t}(x),f_{t}(y))\le td_{\HH}(x,y)+\log (2).$$
This shows the first point.

\smallskip
For the last point, we recall that quasi-isometric maps between geodesic Gromov-hyperbolic spaces extend continuously to the boundary. This concludes the proof since the estimates of point~(1) are stronger than quasi-isometric.
\end{proof}

Knowing the translation length of a hyperbolic element $g$ of $\Isomn$ in the representation $\varrho_{t}$, we can now prove that the fixed points of $\varrho_{t}(g)$ in $\partial \HHI$ do not lie in ${\rm L}^{2}(\partial \HH,\R)$.

\begin{prop}\label{ilfautcompleter} Let $g\in \Isomn$ be a non-trivial hyperbolic element of translation length $u\ge 0$. Let $f\in H^{0}\oplus V$ be an isotropic vector representing the attracting fixed point of $\varrho_{t}(g)$ in $\partial \HHI_{t}$. Then $f$ does not lie in the dense subspace ${\rm L}^{2}(\partial \HH,\R)\subset H^{0}\oplus V$. 
\end{prop}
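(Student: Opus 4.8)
The plan is to reduce the statement to an explicit computation of the eigenvalue of the attracting fixed point of $\varrho_t(g)$ under the $K$-decomposition, and then to argue that the resulting expansion in harmonics is square-summable for the form $-\langle\cdot,\cdot\rangle_s$ but \emph{not} in $\mathrm{L}^2(\partial\HH,\R)$. Since every hyperbolic element of $\Isomn$ is conjugate to some $g_u = g_{e^u,0,\Id}$, and conjugation by an isometry carries $\mathrm{L}^2(\partial\HH,\R)$ to itself (the principal series preserves this real subspace), it suffices to treat $g = g_u$ with $u>0$. The two fixed points of $\varrho_t(g_u)$ in $\partial\HHI_t$ are the isotropic lines $\R\eta_1$ and $\R\eta_2$ appearing in Section~\ref{parabolic}, and $f_t(\partial\HH)$ meets $\partial\HHI_t$ precisely in these boundary directions via Proposition~\ref{propertiesconvexhull}; concretely, the attracting direction is represented by the limit, as $r\to+\infty$, of a suitable rescaling of $f_t(g_u^{\,r}\cdot o)$, which points toward the direction of the boundary point $b=(1,1,0,\dots)\in\bS^{n-1}$ of $\HH$.

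First I would identify the isotropic vector $f$ explicitly. Using the model $V_s$, or equivalently $\mathrm{L}^2(\partial\HH)$ with the Poisson-kernel action $\pi_s(g)f = \vert\jac(g^{-1})\vert^{\frac12+s} f\circ g^{-1}$, the Busemann/horospherical considerations identify $f$ (up to scalar) with the function $b\mapsto B(o,b)^{-(n-1)(\frac12+s)} = B(o,b)^{-\frac{n-1}{2}-\frac{t}{2}-\frac{n-1}{2}}$ — in the Klein coordinates, a negative power of $(1-b_1)$ or $(1+b_1)$ depending on which fixed point one takes. More precisely the attracting fixed point corresponds to a distribution-like kernel concentrated at the single point of $\partial\HH$ fixed attractively by $g_u$; after normalizing it becomes a genuine element of the completion $H^0\oplus V$ because $\lambda_k(s)\to 0$. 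So the second step is to expand this kernel $f$ in spherical harmonics, $f = \sum_k f_k$ with $f_k\in H^k$, and to estimate $\Vert f_k\Vert_{\mathrm{L}^2(\bS^{n-1})}$: for a kernel like $(1-b_1)^{-\beta}$ with $\beta = \frac{n-1}{2}+\frac{t}{2}$ one has classical Gegenbauer-expansion asymptotics giving $\Vert f_k\Vert_{\mathrm{L}^2}\asymp k^{\beta - \frac{n-1}{2} - \frac12}\cdot(\text{dim }H^k)^{1/2}$ roughly, i.e.\ a polynomial rate. One then checks, using $p_k\asymp k^{n-2}$ from~(\ref{eq:binom}) and $\lambda_k(s)\asymp k^{-(2t+n-1)}$ from~(\ref{valeurlambda}), that $\sum_k p_k \cdot k^{2(\beta-\frac{n-1}{2})}\cdot \vert\lambda_k(s)\vert < \infty$ (finiteness in the completed norm, which is forced since $f$ lies in $H^0\oplus V$) while $\sum_k p_k\cdot k^{2(\beta-\frac{n-1}{2})} = +\infty$ (divergence in the $\mathrm{L}^2$ norm, because $\beta$ is tuned so that the kernel is exactly not in $\mathrm{L}^2$ — this is where the constraint $t<1$, equivalently $s\in(s_0,s_1)$, enters, paralleling the integrability remark after Lemma~\ref{dimco}).

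Alternatively, and perhaps more cleanly, I would avoid harmonic analysis altogether: suppose for contradiction that $f\in\mathrm{L}^2(\partial\HH,\R)$ is isotropic and $\varrho_t(g_u)$-attracting. Being in $\mathrm{L}^2(\partial\HH,\R)$, one has $\langle f,f\rangle_s = \int f\,\overline{L_s f}$ well-defined and, since $f$ is isotropic, equal to $0$; more to the point, for the \emph{finite-dimensional} part this would force $f\in\oplus_{k\ge1}H^k$ to be genuinely $\mathrm{L}^2$. Then consider the orbit $\varrho_t(g_u^{\,r})\cdot[f]\to[f]$ and compare with the known translation length $\ell_{\varrho_t}(g_u)=tu$ (Proposition~\ref{asymptotic}): the decay rate of $\pi_s(g_u^{\,r})f$ towards the isotropic cone is governed by $e^{-tur}$ in the ambient quadratic form, whereas for a genuine $\mathrm{L}^2$ function the $\mathrm{L}^2$-norm growth of $\pi_s(g_u^{\,r})f = \vert\jac(g_u^{-r})\vert^{\frac12+s}f\circ g_u^{-r}$ is governed by the Jacobian to the power $1+2s = 1 + \frac{2t}{n-1}+1$, i.e.\ by a \emph{different} exponent; matching the two forces a relation between $t$ and $n$ that fails for $t\in(0,1)$. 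I expect the main obstacle to be making the ``attracting fixed point is this explicit kernel'' step rigorous in the completed Pontryagin space $H^0\oplus V$ rather than in $\mathrm{L}^2$ — i.e.\ justifying that the limit of normalized orbit points genuinely converges in $\partial\HHI_t$ to the claimed boundary point and computing its coordinates — after which the divergence of the $\mathrm{L}^2$-series is a routine estimate using~(\ref{eq:binom}) and~(\ref{valeurlambda}).
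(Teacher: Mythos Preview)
Your two approaches are both much more involved than the paper's, and the second one does not work as stated.

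The paper's argument is a four-line $L^1$ trick. After reducing to $g=g_u$ and replacing $f$ by $|f|$ (the pointwise eigenvalue equation $e^{tu}f(b)=|\jac(g_u^{-1})(b)|^{\frac12+s}f(g_u^{-1}b)$ survives absolute values), one splits the exponent as $\tfrac12+s=\tfrac{t}{n-1}+1$ and uses the pointwise bound $|\jac(g_u^{-1})|^{t/(n-1)}\le e^{tu}$ (maximum of the Jacobian is $e^{(n-1)u}$). This yields $e^{tu}f(b)\le e^{tu}|\jac(g_u^{-1})(b)|\,f(g_u^{-1}b)$. Integrating over $\partial\HH$ and changing variables on the right gives $e^{tu}\!\int f\le e^{tu}\!\int f$, hence equality a.e., which forces $|\jac(g_u^{-1})|^{t/(n-1)}\equiv e^{tu}$ on $\{f\neq 0\}$ --- impossible since the Jacobian is nonconstant. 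No harmonic analysis, no asymptotics, no identification of the kernel.

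Your first route (write $f$ as $(1-b_1)^{-\beta}$, expand in Gegenbauer polynomials, compare the two norms) is in principle viable, and your exponent $\beta=\tfrac{n-1+t}{2}$ is correct, but you have not actually verified that this function solves the eigenvalue equation, and the harmonic-expansion estimates are only gestured at. It would take real work to close, and it buys nothing over the paper's argument.

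Your second route has a genuine gap. If $f\in L^2$ is an eigenvector with $\pi_s(g_u)f=e^{tu}f$, then $\|\pi_s(g_u^{\,r})f\|_{L^2}=e^{tur}\|f\|_{L^2}$ \emph{exactly}; there is no ``different exponent'' coming from the Jacobian. Computing the $L^2$ norm via the integral formula gives $e^{2tu}\!\int|f|^2=\int|\jac(g_u)|^{-2s}|f|^2$, and since the weight $|\jac(g_u)|^{-2s}$ ranges over $[e^{-(2t+n-1)u},e^{(2t+n-1)u}]$, the value $e^{2tu}$ sits comfortably inside this interval and no contradiction follows. The reason the paper's argument succeeds where this fails is that the exponent $1$ on the Jacobian is the unique one making $\int|\jac(g_u^{-1})|\cdot(h\circ g_u^{-1})=\int h$ an identity, which pins the inequality to an equality and forces the Jacobian to be constant.
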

\begin{proof} We can assume that $g=g_{u}$ for some $u$. Note that if $\xi$ is an isotropic vector representing the attracting fixed point for $\varrho_{t}(g_{u})$ one has $\varrho_{t}(g_{u})(\xi)=e^{tu}\xi$. Assume that there exists a non-zero function $f\in {\rm L}^{2}(\partial \HH,\R)$ such that:
$$e^{tu}f(b)={\rm Jac}(g_{u}^{-1})^{s+\frac{1}{2}}(b)f(g_{u}^{-1}(b)).$$
Up to replacing $f$ by $|f|$, we can assume that $f\ge0$. We rewrite the above equation in the following way:
$$e^{tu}f(b)={\rm Jac}(g_{u}^{-1})^{\frac{t}{n-1}}(b) {\rm Jac}(g_{u}^{-1})(b)f(g_{u}^{-1}(b)).$$
Using formula~(\ref{eq:jaco}) for the Jacobian, one sees that the maximum value of $\jac(g_{u}^{-1})$ on $\partial \HH$ is equal to $e^{(n-1)u}$, we thus obtain:
$$e^{tu}f(b)={\rm Jac}(g_{u}^{-1})^{s+\frac{1}{2}}(b)f(g_{u}^{-1}(b))\le e^{tu}\jac(g_{u}^{-1})(b)f(g_{u}^{-1}(b)).$$
Being in ${\rm L}^{2}(\partial \HH,\R)$, the function $f$ is in particular integrable; we can thus integrate each side of the above equation. Since the two integrals are equal we must have:
$${\rm Jac}(g_{u}^{-1})^{s+\frac{1}{2}}(b)f(g_{u}^{-1}(b))= e^{tu}\jac(g_{u}^{-1})(b)f(g_{u}^{-1}(b))$$
almost everywhere. If $f$ is non-zero, this implies that the following identity holds on a set of positive measure:
$${\rm Jac}(g_{u}^{-1})^{\frac{t}{n-1}}(b)=e^{tu}.$$
This is a contradiction. 
\end{proof}

\begin{rem}\label{indice2} In this text, we chose to deal with the full isometry group $\Isomn$ of the hyperbolic space $\HH$. Note however that the actions $\varrho_{t}$ are still non-elementary when restricted to the identity component $\Isomn^{\circ}$ of $\Isomn$. This follows for instance from the fact that an action of a group $G$ on $\HHI$ is elementary if and only if it preserves a finite set in $\HHI \cup \partial \HHI$.  Also, it is easy to see that our proof that the $\varrho_{t}$ are the only irreducible actions of $\Isomn$ on $\HHI$ still applies to the group $\Isomn^{\circ}$.
\end{rem}


\section{Convex hulls and exotic spaces}\label{sec:Ct}
This section is devoted to the study of the spaces $C_{t}$. In Section~\ref{sec:Ct:def}, we establish their surprising properties described in Theorem~\ref{convexhull} and Proposition~\ref{propertiesconvexhull}. Section~\ref{sec:GH} introduces and studies the \emph{strong topology}, which is a refined Gromov--Hausdorff topology on the class of all pointed metric $G$-spaces (where $G$ is a fixed group). Section~\ref{preuvecontinuity} contains the proof of Theorem~\ref{continuous}. Finally, Section~\ref{renor} contains a few remarks concerning the behavior of the spaces $C_{t}$ as $t$ goes to $0$.

\subsection{\texorpdfstring{The spaces $C_t$}{The spaces Ct}}\label{sec:Ct:def} 
Let $C_t$ be the closed convex hull of $f_{t}(\HH)$ inside $\HHI$. We have: 

\begin{lemma}\label{lemma:C_exists} The  space $C_{t}$ is minimal, i.e. it admits no nontrivial $\Isomn$-invariant closed convex subset. Moreover, this is the unique minimal $\Isomn$-invariant closed convex subset of $\HHI$. 
\end{lemma}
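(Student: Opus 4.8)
The plan is to prove both statements at once using the structure theory of non-elementary actions on $\HHI$ recalled in the introduction. First I would recall (from the discussion following Proposition~\ref{ilfautcompleter}, or directly from \cite{bim}) that a non-elementary isometric action of a group on a complete \cat{-1} space of the form $\HHI$ admits \emph{at most one} minimal non-empty closed convex invariant subset; indeed, if $C$ and $C'$ were two distinct such subsets, then either they are disjoint, in which case the (nonempty, closed, convex, invariant) set of midpoints of pairs of nearest points would be a proper invariant convex subset of a suitable nearest-point object and one extracts a contradiction with minimality, or they intersect, in which case $C\cap C'$ is a nonempty closed convex invariant subset strictly smaller than at least one of them, again contradicting minimality. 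So uniqueness is automatic \emph{once existence is known}, and the whole content is that $C_t$ itself is minimal.

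So the main task is: given the closed convex hull $C_t$ of $f_t(\HH)$, show it contains no proper nonempty closed convex $\Isomn$-invariant subset $Y$. Suppose such a $Y$ exists. Since the action $\varrho_t$ is non-elementary and has no proper closed totally geodesic invariant subspace (this was recorded right after the definition of $\HHI_t$, using irreducibility of $\pi_s$ for $s\in(s_0,s_1)$), the action on $\HHI$ restricted to $Y$ is still an isometric action of $\Isomn$; I would argue it is again non-elementary. Indeed, if $\Isomn$ fixed a point $\xi\in Y\cup\partial Y$ or preserved a geodesic, this would be an invariant point/geodesic in $\overline{\HHI}$, forcing the whole linear action to be reducible (an invariant isotropic line, or an invariant non-degenerate plane), contradicting irreducibility of $\pi_s$. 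Hence $Y$ is non-elementary; by the general fact again it has a unique minimal closed convex invariant subset $Y_{\min}\se Y\se C_t$, and by the same token $C_t$ itself, being non-elementary, has one, say $C_{\min}\se C_t$, and $C_{\min}\se Y_{\min}$. So it suffices to show $C_{\min}=C_t$, i.e.\ to rule out \emph{any} proper invariant closed convex subset.

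The key step, and the one I expect to be the crux, is to show that the orbit closure $\overline{\varrho_t(\Isomn)\cdot\ast}$ — equivalently $f_t(\HH)$ and hence its closed convex hull $C_t$ — is already contained in every invariant closed convex subset $Y$. The mechanism is a projection/barycenter argument: let $Y$ be nonempty closed convex invariant, and let $\pi_Y\colon\HHI\to Y$ be the nearest-point projection, which is $\Isomn$-equivariant (by uniqueness of nearest points in \cat{-1} spaces and invariance of $Y$). Then $\pi_Y(\ast)$ is a $K$-fixed point of $\HHI$ — but by Lemma~\ref{lemma:unique:k} the unique $K$-fixed point of $\HHI_t$ is $\ast$ itself, so $\ast\in Y$. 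Since $Y$ is invariant, $\varrho_t(g)\ast=f_t(g\cdot o)\in Y$ for all $g$, hence $f_t(\HH)\se Y$, and since $Y$ is closed and convex, $C_t\se Y$. Thus $Y=C_t$, which simultaneously gives minimality of $C_t$ and, combined with the abstract uniqueness recalled above, the fact that $C_t$ is \emph{the} minimal invariant closed convex subset. (The one point needing a line of care is that $\pi_Y(\ast)$ is genuinely fixed by all of $K$: $\pi_Y$ commutes with $\Isomn$ and $K$ fixes $\ast$, so $\varrho_t(k)\pi_Y(\ast)=\pi_Y(\varrho_t(k)\ast)=\pi_Y(\ast)$ for $k\in K$, as required.) I would also remark that the same argument shows $C_1=\HH$ is the minimal invariant convex subset for the standard action, consistently with the stated convention.
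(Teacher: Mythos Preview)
Your core argument is correct and is exactly the paper's: any nonempty closed convex $\Isomn$-invariant $Y\se\HHI$ contains a $K$-fixed point (you produce it as $\pi_Y(\ast)$, the paper invokes Cartan's fixed point theorem directly), and since $\ast$ is the unique $K$-fixed point by Lemma~\ref{lemma:unique:k} this forces $\ast\in Y$, hence $f_t(\HH)\se Y$ and $C_t\se Y$. Note that this single observation already yields both minimality and uniqueness simultaneously, so your preliminary discussion of abstract uniqueness and the $Y_{\min}/C_{\min}$ reduction is unnecessary.
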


Actually, it is easy to deduce from the results in~\cite{monodJAMS} that any group $G$ acting isometrically on a \cat{-1} space $X$ and having no fixed point in $X\cup \partial X$ has a unique minimal closed $G$-invariant convex subset in $X$. However, in our case the fact that the compact subgroup $K\subset \Isomn$ has a unique fixed point in $\HHI$ makes the proof immediate. 

\begin{proof}[Proof of Lemma~\ref{lemma:C_exists}] It is enough to prove that if $C$ is a closed $\Isomn$-invariant convex subset of $\HHI$, then $C$ contains $C_{t}$. But if $C$ is such a set, the compact group $K$ must fix a point in $C$. Since $\ast$ is the unique $K$-fixed point in $\HHI$, we have $\ast\in C$. Hence the $\Isomn$-orbit of $\ast$ and its closed convex hull are contained in $C$, i.e. $C_{t}\subset C$. 
\end{proof}

Theorem~\ref{curv}(\ref{curv:bd}) yields a continuous $\Isomn$-equivariant map $\partial\HH\to\partial\HHI$. Its image is a non-empty  $\Isomn$-invariant subset $f_t(\partial\HH)$ which is compact for the cone topology. We now consider the Klein model: that is, the space $\HHI$ is identified with the unit ball $B$ in a Hilbert space and hyperbolic geodesics correspond simply to chords in that ball.  Moreover, the topology on $\overline{\HHI}$ corresponds to the norm topology on the closed ball $\overline B$.

\smallskip
Denote by $C'\se\HHI$ the closed convex hull of $f_t(\partial\HH)$, which is by definition the intersection of all closed convex subsets of $\HHI$ whose boundary at infinity contains $f_t(\partial\HH)$. The above discussion shows that $C'$ is identified with the intersection of $B$ and the closed convex hull (in the affine sense) of $f_t(\partial\HH)$ in $\overline B$. It also makes it apparent that $\partial C' = f_t(\partial\HH)$. The Mazur compactness theorem~\cite{mazur} (an English reference is e.g.~\cite[2.8.15]{meg}) shows that the closed convex hull of $f_t(\partial\HH)$ in $\overline B$ is norm-compact. Therefore, it follows that $C'$ is locally compact and closed in $B$. In conclusion, $C'$ is a closed subspace of $\HHI$ and is proper as a metric space.

\smallskip

\begin{lemma}\label{lemma:all_C}
We have $C'=C_t$. In particular, $C_t$ is a proper metric space in view of the above application of Mazur's compactness theorem.
\end{lemma}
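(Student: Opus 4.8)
The plan is to prove the two inclusions $C'\subseteq C_t$ and $C_t\subseteq C'$ separately, using the explicit descriptions in the Klein model together with the already-established fact that $\partial C' = f_t(\partial\HH) = f_t(\overline{\HH})\setminus f_t(\HH)$ on the boundary, and that $C'$ is proper.

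First I would show $C_t\subseteq C'$. Since $C_t$ is the closed convex hull of $f_t(\HH)$, it suffices to check that $f_t(\HH)\subseteq C'$. The set $C'$ is closed, convex, $\Isomn$-invariant, and non-empty (it is non-empty because its boundary at infinity is non-empty, e.g.\ one can exhibit a point of $C'$ as a point on a chord between two points of $f_t(\partial\HH)$, or invoke Lemma~\ref{lemma:C_exists}). By Lemma~\ref{lemma:C_exists}, the minimal $\Isomn$-invariant closed convex subset $C_t$ is contained in \emph{every} closed $\Isomn$-invariant convex subset of $\HHI$; applying this to $C'$ gives $C_t\subseteq C'$ at once. (Concretely: $K$ fixes a point of $C'$, which must be $\ast$, so $\ast\in C'$, and then the closed convex hull of $\Isomn\cdot\ast$, namely $C_t$, lies in $C'$.)

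For the reverse inclusion $C'\subseteq C_t$, I would argue in the Klein model, where $C_t$ is identified with the intersection of the open unit ball $B$ with the norm-closed affine convex hull $\widehat{C_t}$ of $f_t(\HH)$ in $\overline B$. The boundary at infinity of $C_t$ contains $f_t(\partial\HH)$: indeed, by Theorem~\ref{curv}(\ref{curv:bd}) the map $f_t$ extends continuously to $\overline\HH\to\overline\HHI$, and since $f_t(\HH)\subseteq C_t$ and $C_t$ is closed in $B$ (being a closed convex hull), every point $f_t(\zeta)$ with $\zeta\in\partial\HH$ is a limit in $\overline B$ of points of $f_t(\HH)\subseteq \widehat{C_t}$, hence lies in $\widehat{C_t}\cap\partial B\subseteq\overline{C_t}$, i.e.\ in $\partial C_t$. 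Therefore $C_t$ is a closed convex subset of $\HHI$ whose boundary at infinity contains $f_t(\partial\HH)$, and so by the very definition of $C'$ as the intersection of all such sets we get $C'\subseteq C_t$.

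Combining the two inclusions yields $C'=C_t$, and properness of $C_t$ then follows from the properness of $C'$ established above via Mazur's compactness theorem. The main obstacle I anticipate is bookkeeping the passage between the intrinsic hyperbolic picture and the Klein (affine) model: one must be careful that ``closed convex hull inside $\HHI$'' and ``(affine closed convex hull in $\overline B$) $\cap\, B$'' agree, and that boundary-at-infinity statements translate correctly to statements about the norm-closure in $\overline B$ --- but this is exactly the dictionary already set up in the paragraph preceding the lemma, so no genuinely new idea is needed. A minor point to handle is ensuring $C'$ is non-empty before invoking Lemma~\ref{lemma:C_exists} on it; this is immediate since $f_t(\partial\HH)$ has at least two points (the action being non-elementary) and any chord between two of them lies in $C'$.
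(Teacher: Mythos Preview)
Your proof is correct and follows essentially the same approach as the paper: the inclusion $C_t\subseteq C'$ via the minimality Lemma~\ref{lemma:C_exists} (since $C'$ is non-empty, closed, convex, and $\Isomn$-invariant), and the inclusion $C'\subseteq C_t$ by observing that $C_t$ is a closed convex set whose boundary at infinity contains $f_t(\partial\HH)$ thanks to Theorem~\ref{curv}(\ref{curv:bd}). The paper's write-up is terser---it notes in one line that \emph{every} closed convex set containing $f_t(\HH)$ has boundary containing $f_t(\partial\HH)$---but the content is the same.
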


\begin{proof}
By definition, $C_{t}$ is the intersection of all closed convex subsets of $\HHI$ that contain $f_t(\HH)$. Since the boundary of any such set contains $f_t(\partial\HH)$, we have $C'\se C_{t}$. By minimality of $C_{t}$ (Lemma~\ref{lemma:C_exists}) we have $C'=C_{t}$.
\end{proof}

\begin{prop}\label{prop:coco}
The $\Isomn$-action on $C_t$ is cocompact.
\end{prop}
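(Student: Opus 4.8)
The plan is to exploit the already-established structure: $C_t$ is the closed convex hull of $f_t(\partial\HH)$, and $\partial C_t = f_t(\partial\HH)$, with $C_t$ proper (locally compact) by Mazur's theorem. To prove cocompactness, it suffices to bound the distance from an arbitrary point $x\in C_t$ to the orbit $\Isomn\cdot\ast$, where $\ast = f_t(o)$ is the $K$-fixed point. Equivalently, since $\Isomn$ acts transitively on $\HH$ and $f_t$ is $\Isomn$-equivariant, it is enough to show that $\sup_{x\in C_t}\,d_{\HHI}(x, f_t(\HH))<\infty$, i.e.\ that $C_t$ lies within a bounded neighborhood of the quasi-isometrically embedded copy $f_t(\HH)$.

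The key point is that $f_t(\HH)$ is quasi-convex. First I would recall from Theorem~\ref{curv}(\ref{curv:qi}) and~(\ref{curv:bd}) that $f_t\colon\HH\to\HHI$ is a quasi-isometric embedding onto its image (after rescaling by $t$) which extends continuously to $\partial\HH$; hence $f_t(\HH)$ is a quasi-isometrically embedded copy of a \cat{-1} space inside the \cat{-1} space $\HHI$. A standard Morse-type stability argument in Gromov-hyperbolic spaces then shows that any geodesic of $\HHI$ with both endpoints in $f_t(\partial\HH)$ stays within a uniform bounded distance $R = R(t)$ of $f_t(\HH)$: indeed such a geodesic is a $\delta$-hyperbolic geodesic asymptotic to the image of a geodesic of $\HH$ under $f_t$, and the two are uniformly Hausdorff-close. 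Consequently the union of all such geodesics — call it $Q$ — satisfies $Q\se \mathcal{N}_R(f_t(\HH))$. Now $Q$ is $\Isomn$-invariant (since $f_t(\partial\HH)$ is) and convex-ish; more precisely its closed convex hull is again $C_t$ by Lemma~\ref{lemma:all_C} together with the fact that $\partial Q \supseteq f_t(\partial\HH)$. To pass from "$Q$ is within $R$ of $f_t(\HH)$" to "$C_t$ is within a bounded neighborhood of $f_t(\HH)$", I would use that in a \cat{-1} (more generally Gromov-hyperbolic) space the closed convex hull of a set $S$ lies in a uniformly bounded neighborhood of the union of geodesics joining pairs of points of $S$ — applied to $S = f_t(\partial\HH)$, whose geodesic-joins are exactly $Q$. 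Since $\mathcal{N}_R(f_t(\HH))$ is already "geodesically saturated" up to enlarging $R$ by the hyperbolicity constant $\delta$ of $\HHI$, we conclude $C_t\se\mathcal{N}_{R+O(\delta)}(f_t(\HH))$.

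Having established $C_t\se\mathcal{N}_{R'}(f_t(\HH))$ with $R'<\infty$, cocompactness follows: given any $x\in C_t$, pick $y\in f_t(\HH)$ with $d_{\HHI}(x,y)\le R'$; write $y = f_t(h\cdot o) = \varrho_t(h)(\ast)$ for some $h\in\Isomn$; then $d_{\HHI}(\varrho_t(h)^{-1}x,\ast)\le R'$. Thus every $\Isomn$-orbit in $C_t$ meets the closed ball $\overline{B}(\ast,R')\cap C_t$, which is compact since $C_t$ is proper (Lemma~\ref{lemma:all_C}). Hence the quotient $C_t/\Isomn$ is compact, i.e.\ the action is cocompact.

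The main obstacle I anticipate is making the quasi-convexity / convex-hull argument precise in the infinite-dimensional \cat{-1} setting: one must be careful that the Morse lemma and the "convex hull is close to the union of geodesics" statement hold in $\HHI$, which is not proper — but both are soft consequences of Gromov-hyperbolicity (a uniform $\delta$ for $\HHI$ is available, e.g.\ from~\cite{bim} or directly from the \cat{-1} property) and do not require local compactness, so this is a technical rather than conceptual difficulty. An alternative, perhaps cleaner route that avoids the Morse lemma entirely: work in the Klein model as in the paragraph preceding Lemma~\ref{lemma:all_C}, where $C_t = \overline B\cap(\text{affine convex hull of }f_t(\partial\HH))$ is norm-compact in $\overline B$; then the displacement function $x\mapsto d_{\HHI}(x,\ast)$ restricted to $C_t$ need not be bounded, but one can instead observe that $C_t$, being a proper \cat{-1} space on which $\Isomn$ acts with the unbounded-orbits property (the displacement function $\varphi_{\varrho_t,\ast}$ is proper, as recorded in Section~\ref{parabolic}), together with $\partial C_t = f_t(\partial\HH)$ being a single $\Isomn$-orbit closure of controlled size, forces cocompactness via a limiting/ultralimit argument. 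I would, however, present the Morse-lemma version as the primary proof since it is the most transparent.
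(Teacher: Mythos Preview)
Your strategy is sound and shares the two essential ingredients with the paper's proof: the identification $C_t=$ closed convex hull of $f_t(\partial\HH)$ (Lemma~\ref{lemma:all_C}) and Morse-lemma stability applied to the quasi-isometric embedding $f_t$. The difference lies in how one bridges ``geodesics with endpoints in $f_t(\partial\HH)$ stay near $f_t(\HH)$'' to ``all of $C_t$ stays near $f_t(\HH)$''.

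You invoke the principle that in a \cat{-1} space the closed convex hull of $S\subseteq\partial X$ lies within a uniform neighbourhood of the union $Q$ of geodesics joining pairs in $S$. This is \emph{true in $\HHI$}, but it is not the ``soft consequence of Gromov-hyperbolicity'' you suggest: the na\"ive iteration (join, then join again, \ldots) gives only a linearly growing bound, and in general \cat{-1} spaces the question is delicate. For $\HHI$ one needs an argument specific to constant curvature: if $d(x,Q)>r$ then every pair $\xi,\eta\in S$ subtends angle $<2\arcsin(\mathrm{sech}\,r)$ at $x$, so for $r>\log(1+\sqrt2)$ all of $S$ lies in one open hemisphere at $x$, and a totally geodesic hyperplane separates $x$ from the hull. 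You should spell this out rather than defer to hyperbolicity.

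The paper avoids this lemma altogether. It argues by contradiction: if $x_j\in C_t$ escapes the orbit of $x_0=f_t(o)$, then---and this is where the \emph{compactness of $\overline{C_t}$} from Mazur's theorem is used, not merely local compactness---one may pass to a subsequence with $x_j\to\xi\in\partial C_t=f_t(\partial\HH)$. Choosing $y_i\in\HH$ with $f_t(y_i)\to\xi$, the Gromov product $L_{i,j}$ tends to infinity; the point $z_{i,j}$ on $[x_0,x_j]$ at distance $L_{i,j}$ from $x_0$ is then forced (triangle inequality) to have $d(z_{i,j},\text{orbit})\geq L_{i,j}$, yet by thinness of triangles it lies boundedly close to $[x_0,f_t(y_i)]$, which by Morse lies boundedly close to $f_t([o,y_i])\subseteq f_t(\HH)$---contradiction. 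In effect the paper proves your step~3 only along the single relevant sequence, using the extra compactness to manufacture a boundary limit; your route is more conceptual but requires the half-space argument above to be complete.
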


\begin{proof}
Suppose for a contradiction that $C_t$ contains a sequence $x_j$ such that the distance $d(x_j, \Isomn \ast )$ to the orbit of $\ast =f_t(o)$ tends to infinity. Upon replacing $x_j$ with an $\Isomn$-translate, we can suppose that $d(x_j, \Isomn \ast ) = d(x_j, \ast )$. Moreover, we can assume that $x_j$ converges to some point $\xi\in \partial C_t = f_t(\partial\HH)$. Choose a sequence $y_i$ in $\HH$ such that $f_t(y_i)$ converges also to $\xi$. Thus the Gromov product
$$L_{i,j}\ =\ \frac12 \Big(  d(f_t(y_i), \ast )  + d(x_j, \ast ) - d(f_t(y_i), x_j) \Big)$$
tends to infinity as $i,j\to \infty$. Let $z_{i,j}$ be the point of the geodesic $[\ast , x_j]$ at distance $L_{i,j}$ of $\ast $. Since $d(x_j, \Isomn \ast ) = d(x_j, \ast )$, we have $d(z_{i,j}, \Isomn \ast ) = L_{i,j}\to\infty$. By Gromov-hyperbolicity, there is a constant bounding the distance from $z_{i,j}$ to the geodesic $[\ast , f_t(y_i)]$. Theorem~\ref{curv}(\ref{curv:qi}) shows that the image under $f_t$ of the geodesic $[o, y_i]$ is a quasi-geodesic controlled by constants depending only on $t$. Thus, by Gromov-hyperbolicity, a similar constant bounds the distance from $[\ast , f_t(y_i)]$ to $f_t(\HH)$. Putting everything together, $z_{i,j}$ remains at bounded distance of $f_t(\HH)$, which is none other than the orbit of $\ast $; this is a contradiction.
\end{proof}

We now endow the group $\Isom(C_t)$ of isometries of $C_t$ with the compact-open topology. Since the space $C_t$ is proper, this turns it into a locally compact group. 

\begin{prop}\label{prop:isom:ct}
The representation $\varrho_t$ induces an isomorphism $\Isomn\cong\Isom(C_t)$.
\end{prop}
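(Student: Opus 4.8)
The plan is to show that any isometry of $C_t$ extends to an isometry of $\HHI$, and then use the classification of such isometries together with the minimality of $C_t$ to recognize it as an element of $\varrho_t(\Isomn)$. First I would recall that $C_t$, being a proper \cat{-1} space (Lemma~\ref{lemma:all_C}), has a well-defined boundary at infinity $\partial C_t$, and that by Proposition~\ref{propertiesconvexhull}(2) we have the equivariant identification $\partial C_t = f_t(\partial\HH)$. Since $f_t$ extends to a homeomorphism onto its image on the boundary (Theorem~\ref{curv}(\ref{curv:bd}) together with the fact that $f_t$ is a quasi-isometric embedding, so it is injective at infinity), we get an identification $\partial C_t \cong \partial\HH = \bS^{n-1}$ as topological spaces, equivariant for the $\Isomn$-action via $\varrho_t$ on one side and the standard action on the other.

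The key step is the following rigidity input: an isometry $\phi$ of $C_t$ induces a homeomorphism $\partial\phi$ of $\partial C_t \cong \bS^{n-1}$. Because $C_t$ sits as a closed convex subset of $\HHI$, and because $C_t$ is the closed convex hull of $\partial C_t$ (Proposition~\ref{propertiesconvexhull}(2)), I claim $\phi$ extends canonically to an isometry of $\HHI$: indeed, an isometry of a \cat{-1} space acts on its boundary by a Möbius homeomorphism for the visual (Gromov) metric, and one should check that the visual boundary metric of $C_t$ at a point $\ast = f_t(o)$ coincides (up to the standard scaling) with the one induced from $\HHI$, so that $\partial\phi$ is a Möbius transformation of $\partial\HHI$ preserving the round subsphere $f_t(\partial\HH)$; such a transformation preserves the totally geodesic copy of $\HHI_{\varrho_t}$ spanned by that subsphere, hence its restriction there, composed with the identification with $C_t$, gives back $\phi$. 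Alternatively, and perhaps more cleanly, one argues directly inside $\HHI$: since $C_t$ is the closed convex hull of $f_t(\partial\HH)$ in the Klein model and $\phi$ permutes the extreme points $f_t(\partial\HH)$ by a homeomorphism, one extends $\phi$ to the affine span and checks it is an affine isometry for $B$. Either way, we obtain a group homomorphism $\Isom(C_t)\to\Isom(\HHI_{\varrho_t})$ that is a right inverse to restriction.

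Next I would show the image of $\Isom(C_t)$ in $\Isom(\HHI_{\varrho_t})$, or rather its induced action on $\partial C_t = \bS^{n-1}$, lands in the conformal group of the round sphere, i.e.\ in $\Isomn$ itself. The point is that the boundary action of $\Isom(C_t)$ preserves the cross-ratio structure coming from the \cat{-1} geometry, which on the round subsphere $f_t(\partial\HH)\subset \partial\HHI$ is exactly the standard conformal structure on $\bS^{n-1}$ (this uses that $f_t$ is a quasi-isometric embedding, so it preserves the quasiconformal/Möbius structure at infinity, and then a Liouville-type rigidity statement for $n\geq 2$ — for $n=2$ one instead uses that $\varrho_t(\Isomn)$ already acts on $\partial C_t \cong \bS^1$ as $\mathrm{PSL}_2(\R)$ and that $\Isom(C_t)$ contains this and acts faithfully on the circle with the same combinatorics). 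Concretely: the map $\Isom(C_t)\to\mathrm{Homeo}(\bS^{n-1})$ is injective (an isometry of $C_t$ fixing $\partial C_t$ pointwise fixes all geodesics between boundary points, hence all of $C_t$ since $C_t$ equals the convex hull of its boundary), so it suffices to identify the image. Since $\varrho_t(\Isomn)\subseteq\Isom(C_t)$ by construction and $\varrho_t$ is faithful (it is non-elementary, so has trivial kernel, using that $\Isomn$ is almost simple), we get $\Isomn\hookrightarrow\Isom(C_t)$; conversely the boundary action of any element of $\Isom(C_t)$ is a homeomorphism of $\bS^{n-1}$ commuting appropriately with the conformal structure, hence conformal, hence in $\Isomn$.

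The main obstacle I expect is the second paragraph: rigorously extending an abstract isometry of $C_t$ to an isometry of the ambient $\HHI$ (equivalently, controlling the boundary map). One must be careful that $C_t$ is \emph{not} geodesically complete, so an isometry of $C_t$ need not a priori come from $\HHI$; the argument really has to exploit that $C_t$ is the \emph{closed convex hull of its own boundary} (Proposition~\ref{propertiesconvexhull}(2)) so that $C_t$ is rigidly determined by the pair $(\partial C_t, \text{cross-ratio})$, and then invoke the precise form of Möbius/Liouville rigidity on the round sphere at infinity; in the $n=2$ case this last step needs a separate, softer argument since conformal rigidity of $\bS^1$ fails and one must instead use the dynamics of the $\varrho_t$-action (e.g.\ that its limit set is all of $\bS^1$ and the action is convergence-type with the same pairs of fixed points as the standard one). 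I would organize the write-up so that the extension-to-$\HHI$ step is stated as a lemma, proved via the Klein model and Mazur-extreme-point argument already set up in Section~\ref{sec:Ct:def}, and then the identification with $\Isomn$ follows formally from Liouville's theorem (for $n\geq 3$) or the circle dynamics argument (for $n=2$).
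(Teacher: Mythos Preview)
Your approach is genuinely different from the paper's, and it has a real gap in its key step.

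The paper argues entirely via structure theory of locally compact groups: since $C_t$ is proper and $\varrho_t$ is proper, $L=\Isomn$ sits as a closed cocompact subgroup of the locally compact group $G=\Isom(C_t)$; Yamabe's theorem plus the absence of compact normal subgroups (from minimality of $C_t$) forces $G$ to be a Lie group with $G^\circ$ semisimple without compact factors; then $L^\circ$ closed cocompact in $G^\circ$ forces $G^\circ=L^\circ$ (Karpelevich--Mostow), and finally $\Out(L^\circ)\cong\Z/2$ pins down $G=L$. No boundary rigidity is used.

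Your boundary approach runs into the following problem. An isometry $\phi$ of $C_t$ does preserve the cross-ratio on $\partial C_t$ inherited from $\HHI$ (since $C_t$ is convex in $\HHI$, Gromov products agree). But when you transport this to $\bS^{n-1}$ via $f_t^{-1}$, the resulting M\"obius structure is \emph{not} the standard one: $f_t$ is only a $(t,\cdot)$-quasi-isometry (Theorem~\ref{curv}(\ref{curv:qi})), so at infinity it is only quasi-M\"obius. Thus $\psi=f_t^{-1}\circ\partial\phi\circ f_t$ preserves an exotic $\Isomn$-invariant cross-ratio on $\bS^{n-1}$, and Liouville's theorem --- which concerns the \emph{standard} conformal structure --- gives you nothing. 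You would need to know that the automorphism group of this exotic structure is no larger than $\Isomn$, which is essentially the statement you are trying to prove. The same circularity undermines the $n=2$ sketch: knowing $\Isom(C_t)$ acts on $\bS^1$ as a convergence group containing $\PSLR$ does not by itself bound it above by $\PSLR$ without further input (and the convergence group theorem is about discrete groups).

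The proposed extension of $\phi$ to $\HHI$ via the Klein model is also unjustified: permuting extreme points of a compact convex set by an isometry of that set gives no reason for the map to extend affinely, let alone $B$-isometrically, to the ambient Hilbert space. If you want to salvage the boundary route, one viable path is to first prove $\Isom(C_t)$ is a Lie group (this is exactly where the paper invokes Yamabe and the properness/cocompactness of $C_t$), after which the boundary action becomes a smooth extension of the conformal $\Isomn$-action on $\bS^{n-1}$ and rigidity arguments become available --- but at that point you have essentially rejoined the paper's proof.
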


\begin{proof}
For simplicity, let $G=\Isom(C_t)$ and $L=\Isomn$. Since $\varrho_t$ induces a proper homomorphism, we consider $L$ as a closed subgroup of $G$. Moreover, $L$ is cocompact in $G$ because it acts cocompactly on $C_t$. Since $L$ is almost connected ($L^\circ$ has index two in $L$), the connected component $G^\circ$ is cocompact in $G$. Therefore, Yamabe's theorem~\cite[4.6]{montgomeryzippin} implies that $G$ is a Lie group after possibly factoring out a compact normal subgroup $K\lhd G$. However, $G$ has no non-trivial compact normal subgroup because the fixed-point set $C_t^K$ is a non-empty closed convex $G$-invariant set, which implies $C_t^K=C_t$ by minimality of $C_t$ (even amongst $L$-invariant such sets).

Thus, we have a connected Lie group $G^\circ$ of finite index in $G$ (and containing $L^\circ$). The amenable radical of $G^\circ$ is trivial (see e.g.~\cite[1.10]{capracemonod}), so that $G^\circ$ is a semi-simple Lie group without compact factors. Since $L^\circ$ is closed cocompact in $G^\circ$, this forces $G^\circ = L^\circ$ (e.g.\ using the Karpelevich--Mostow theorem). At this point, we have realized $G$ as an extension
$$1 \lra L^\circ \lra G  \lra F \lra 1$$
with $F$ finite. We know from Lie theory that the group of outer automorphisms $\Out(L^\circ)$ is of order two, with the non-trivial element being implemented by conjugation by orientation-reversing elements of $L$. The above group extension induces a representation $G\to F\to\Out(L^\circ)$ whose kernel is $L^\circ. Z_G(L^\circ)$, where $Z_G(L^\circ)$ is the centralizer of $L^\circ$. The centralizer has to be trivial by minimality of $C_t$, see e.g.~\cite[1.10]{capracemonod}.  Therefore, $L^\circ$ is of order two in $G$ and thus $G=L$, finishing the proof.
\end{proof}

\begin{cor}
If $t, t'\in (0, 1]$ are distinct, then the spaces $C_t$ and $C_{t'}$ are not isometric, even upon rescaling the metric.
\end{cor}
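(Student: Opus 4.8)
The plan is to suppose, for contradiction, that there are distinct $t,t'\in(0,1]$, a constant $c>0$ and a bijective isometry after rescaling $\psi\colon C_t\to C_{t'}$, i.e.\ $d_{C_{t'}}(\psi x,\psi y)=c\,d_{C_t}(x,y)$ for all $x,y$, and to extract from it two incompatible relations between $t$, $t'$ and $c$. Conjugation by $\psi$ is an isomorphism of topological groups $\Isom(C_t)\to\Isom(C_{t'})$, so by Proposition~\ref{prop:isom:ct} it produces a (continuous) automorphism $\alpha$ of $\Isomn$ with $\psi\circ\varrho_t(g)=\varrho_{t'}(\alpha(g))\circ\psi$ for all $g\in\Isomn$. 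The first step is to check that $\psi$ carries $f_t(\HH)$ onto $f_{t'}(\HH)$. Since $f_t(\HH)=\varrho_t(\Isomn)\,\ast$, it suffices to show $\psi(\ast)\in f_{t'}(\HH)$. Now $\psi(\ast)$ is fixed by $\psi\,\varrho_t(K)\,\psi^{-1}=\varrho_{t'}(\alpha(K))$; as $\alpha(K)$ is again a maximal compact subgroup of $\Isomn$, it equals $hKh^{-1}=\mathrm{Stab}_{\Isomn}(ho)$ for some $h\in\Isomn$, so $\varrho_{t'}(h)^{-1}\psi(\ast)$ is fixed by $\varrho_{t'}(K)$ and hence equals $\ast$ by Lemma~\ref{lemma:unique:k}; thus $\psi(\ast)=\varrho_{t'}(h)\ast=f_{t'}(ho)$, whence $\psi(f_t(\HH))=\varrho_{t'}(\Isomn)\,f_{t'}(ho)=f_{t'}(\HH)$.

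Next I would pass to the induced self-map of hyperbolic space. Recall from Theorem~\ref{curv} that $f_t$ is a proper embedding onto an $n$-dimensional minimal submanifold whose induced length metric is the Riemannian distance of $f_t^{\ast}g_{hyp}=\tfrac{t(t+n-1)}{n}\,g_{hyp}$ (and similarly for $f_{t'}$). Set $F:=f_{t'}^{-1}\circ\psi\circ f_t\colon\HH\to\HH$, a bijection; the equivariance of $f_t$, $f_{t'}$ and $\psi$ gives $F\circ g=\alpha(g)\circ F$ for all $g\in\Isomn$. Since $\psi$ scales the ambient metric by $c$, it scales the induced length metric of $f_t(\HH)$ by $c$ as well, so $F$ is a homothety of $\HH$ of ratio
$$\mu\;=\;c\,\sqrt{\frac{t(t+n-1)}{t'(t'+n-1)}}\,.$$
Because $\HH$ admits no non-trivial homothety, $\mu=1$, which yields
$$c^{2}\,t(t+n-1)\;=\;t'(t'+n-1),$$
and moreover shows $F\in\Isom(\HH)=\Isomn$, so that $\alpha$ is the inner automorphism given by conjugation by $F$.

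Finally I would bring in the translation length function, which scales differently. For a hyperbolic $g\in\Isomn$, the two fixed points $f_t(g^{+}),f_t(g^{-})\in\partial C_t=f_t(\partial\HH)$ (Proposition~\ref{propertiesconvexhull}) are joined by a geodesic contained in $C_t$ on which $\varrho_t(g)$ translates by $\ell_{\varrho_t}(g)=t\,\ell_{\HH}(g)$, so the translation length of $\varrho_t(g)$ measured inside $C_t$ equals $t\,\ell_{\HH}(g)$. Since $\psi$ scales $d_{C_t}$ by $c$, translation lengths are multiplied by $c$ under $\gamma\mapsto\psi\gamma\psi^{-1}$, hence $t'\,\ell_{\HH}(\alpha(g))=c\,t\,\ell_{\HH}(g)$; and as $\alpha(g)=FgF^{-1}$ with $F\in\Isomn$ we have $\ell_{\HH}(\alpha(g))=\ell_{\HH}(g)$, so, choosing $g$ hyperbolic (so that $\ell_{\HH}(g)\neq0$), $t'=c\,t$. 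Substituting $t'=ct$ into $c^{2}t(t+n-1)=t'(t'+n-1)$ and simplifying (using $t\neq0$ and $n\geq2$) forces $c=1$, hence $t=t'$; this is the desired contradiction, and it shows in passing that even a rescaled isometry forces $t=t'$.

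\textbf{Expected main obstacle.} The delicate step is the first one: proving that the rescaled isometry $\psi$ must send the canonical minimal submanifold $f_t(\HH)$ onto $f_{t'}(\HH)$ and thereby induce an honest homothety of $\HH$; this is what lets the argument close. Once this is secured, the mechanism is transparent: the two numerical invariants attached to $C_t$ — the (constant) curvature $-\frac{n}{t(t+n-1)}$ of the submanifold $f_t(\HH)$, and the proportionality factor $t$ of the translation length function — transform under a metric rescaling by $c$ as $c^{-2}$ and $c^{-1}$ respectively, so a single rescaling cannot reconcile both unless $c=1$. I expect the remaining points (that $f_t$ is an embedding with induced metric $f_t^{\ast}g_{hyp}$, that the hyperbolic space has no non-trivial homothety, and the bookkeeping of translation lengths) to be routine.
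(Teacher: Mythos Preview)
Your argument is correct and follows essentially the same route as the paper's proof: both use Proposition~\ref{prop:isom:ct} to identify $\Isom(C_t)$ with $\Isomn$, then exploit that $\psi$ must carry $f_t(\HH)$ onto $f_{t'}(\HH)$ (via the uniqueness of the $K$-fixed point, Lemma~\ref{lemma:unique:k}) to extract two scaling relations --- one from translation lengths ($t'=ct$) and one from the curvature/induced metric of $f_t(\HH)$ ($c^2t(t+n-1)=t'(t'+n-1)$) --- which together force $c=1$ and $t=t'$. The only cosmetic difference is that the paper invokes the triviality of $\Out(\Isomn)$ to make $\psi$ equivariant from the outset, whereas you recover innerness of $\alpha$ a posteriori from the fact that your induced map $F$ on $\HH$ is a homothety and hence an isometry; the two arguments are equivalent.
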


\begin{proof}
Suppose that there is some scalar $\mu>0$ and a bijection $h\colon C_t\to C_{t'}$ such that $d(h(x), h(y)) = \mu d(x,y)$ for all $x,y\in C_t$. The conjugation of $\varrho_{t'}$ by $h$ yields a continuous homomorphism $\varrho_{t'}^h\colon \Isomn\to \Isom(C_t)$. In view of Proposition~\ref{prop:isom:ct}, the maps $\varrho_{t'}^h$ and $\varrho_t$ are isomorphisms onto $\Isom(C_t)$ and differ by an automorphism of $L=\Isomn$. The description of the outer automorphisms of $L^{\circ}$ given in the proof of Proposition~\ref{prop:isom:ct} implies that the group $L=\Isomn$ has trivial outer automorphism group upon considering the natural map from $\Out(L)$ to $\Out(L^{\circ})$. In conclusion, the two actions on $C_t$ are conjugated by an element of $\Isomn$ and thus $h$ is $\Isomn$-equivariant after conjugation.

We can now apply point~(\ref{class:exist}) of Theorem~\ref{class} and deduce $\mu t = t'$. Moreover, since the subset $f_t(\HH)$ is intrinsically characterized in terms of (the conjugacy class of) the action $\varrho_t$ on $C_t$ by Lemma~\ref{lemma:unique:k}, the curvature given by Theorem~\ref{curv}(\ref{curv:curv}) must rescale according to $\mu^{-2}$. Thus we have
$$\frac{-n}{t'(t'+n-1)}\ =\ \frac{-n}{\mu^{2} t(t+n-1)}$$
which together with $\mu t = t'$ implies $t=t'$, as desired.
\end{proof}

At this point the proofs of Theorem~\ref{convexhull} and Proposition~\ref{propertiesconvexhull} are complete. 

\bigskip

The local compactness of the exotic space $C_t$ established in Proposition~\ref{prop:coco} could not have happened if the ambient space were a Hilbert space rather than $\HHI$, as shown by the following fact.

\begin{lemma}
Let $G$ be a simple Lie group (or a simple algebraic group over a local field). If $G$ acts continuously by isometries on a Hilbert space $V$, then either it fixes a point or it does not preserve any non-empty closed locally compact convex subspace of $V$.
\end{lemma}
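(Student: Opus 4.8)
The strategy is to argue by contradiction: suppose $G$ acts on a Hilbert space $V$ without a fixed point, yet preserves a non-empty closed locally compact convex subset $C\subseteq V$. Since $C$ is closed and convex in a Hilbert space, it is complete and \cat0, so the $G$-action on $C$ has a canonical center of mass for any bounded orbit; as $G$ has no fixed point, the action must be unbounded and hence (by properness of the displacement function for actions of rank-one simple groups on Hilbert spaces, cf.\ Shalom's theorem invoked earlier in the paper, together with the Howe--Moore/Delorme vanishing phenomena in the general semisimple case) the action on $C$ is itself unbounded with proper displacement. The plan is then to extract from local compactness a strong finiteness constraint that is incompatible with this.

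First I would pass to a $G$-minimal non-empty closed convex subset $C_0\subseteq C$, which exists by the Cartan-type argument already cited in the paper (for \cat0 spaces the relevant statement is standard; alternatively one uses that $G$ contains a compact subgroup $K$ — a maximal compact — whose fixed-point set in $C$ is non-empty closed convex and $G$-essential after passing to the minimal set). On $C_0$ the action is minimal and still fixed-point-free. Now local compactness of $C_0$ (inherited from $C$, since a closed convex subset of a locally compact convex set is locally compact) together with completeness means $C_0$ is a proper metric space. Hence $\Isom(C_0)$ is locally compact for the compact-open topology, and the homomorphism $G\to\Isom(C_0)$ is continuous with closed image; because the $G$-displacement function is proper, this homomorphism is proper, so $G$ embeds as a closed subgroup of a locally compact group acting properly cocompactly-in-the-coarse-sense on $C_0$.

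The key step — and the main obstacle — is to derive a contradiction from the coexistence of (i) $C_0$ being a proper \cat0 space on which $G$ acts minimally without fixed point, and (ii) $C_0$ being convex inside a \emph{Hilbert} space, i.e.\ flat in the sense that it contains no genuine negative curvature. The cleanest route is: a proper \cat0 space with a cocompact (or even just minimal with proper displacement) isometric action of a group with Kazhdan's property (T) — which every simple Lie group of rank $\geq 2$, and also ${\rm Sp}(n,1)$ and $F_4^{-20}$, enjoys — must be bounded, because property (T) forces a fixed point on any complete \cat0 space that is either proper or ... — here one must be careful, as property (T) does not imply \cat0 fixed points in general. Instead I would use the Hilbert-space structure directly: the orthogonal part of the affine isometric action is a unitary representation $\pi$ of $G$, and $C_0$ being locally compact and $G$-cocompact would force $\pi$ (restricted to the subrepresentation generated by the linear parts of the cocycle) to be finite-dimensional — a locally compact convex body spans a finite-dimensional affine subspace by a Riesz-type lemma (the unit ball of an infinite-dimensional Hilbert space is never locally compact, and $C_0$ contains balls of the induced metric). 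But $G$ simple has no non-trivial finite-dimensional unitary representation, so the linear part is trivial, the action factors through translations, and a simple group has no non-trivial homomorphism to the abelian group $V$; hence the action is trivial on the affine span of $C_0$, contradicting fixed-point-freeness.

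\textbf{The hard part} will be making the ``locally compact convex $\Rightarrow$ finite-dimensional affine span'' step rigorous in the right generality: a priori $C_0$ could be locally compact near some points without its affine span being finite-dimensional, so one needs that local compactness at a \emph{single} point of a convex set, combined with the convexity, propagates and bounds the dimension. The clean statement is: if a convex subset $C$ of a normed space has non-empty interior relative to its closed affine span $W$ and is locally compact at one point, then $W$ is finite-dimensional (since $C$ then contains a relatively open set with compact closure, forcing the unit ball of $W$ to be compact). For a general convex $C$ one first replaces $V$ by the closed affine span of $C_0$, so $C_0$ has dense affine span by construction; then local compactness at any point gives a relatively compact relatively open neighborhood, hence $W$ is finite-dimensional. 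Once this is in place the rest is the short argument above. I would assemble the write-up as: (1) reduce to $C_0$ minimal; (2) observe $C_0$ locally compact and replace $V$ by $\overline{\mathrm{aff}}(C_0)$; (3) deduce $\dim V<\infty$; (4) conclude $G\to\Isom(V)$ has finite-dimensional unitary linear part, hence trivial linear part, hence the action is by translations and therefore trivial — contradicting the absence of a fixed point.
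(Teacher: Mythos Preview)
Your argument has a genuine gap at the step you yourself flag as ``the hard part''. The claim that a locally compact convex subset $C_0$ of a Hilbert space with dense affine span must have finite-dimensional affine span is false. The Hilbert cube
\[
Q \;=\; \Big\{ x\in\ell^{2} : 0 \le x_n \le \tfrac{1}{n} \ \text{for all } n \Big\}
\]
is compact (hence locally compact), convex, and its linear span contains every $e_n$ and is therefore dense in $\ell^{2}$, yet $\ell^{2}$ is infinite-dimensional. The error is in the sentence ``local compactness at any point gives a relatively compact relatively open neighbourhood, hence $W$ is finite-dimensional'': local compactness of $C_0$ gives you a neighbourhood that is open \emph{in $C_0$}, not open in the closed affine span $W$. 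Dense affine span does not imply non-empty interior in $W$, so you never get a compact set containing a $W$-open set, and the Riesz argument does not fire. Nothing you have set up to this point (minimality, properness of displacement, existence of a maximal compact $K$) rules out a $C_0$ that is ``thin'' in the manner of the Hilbert cube; you would need to use the group action in an essential way precisely here, and the proposal does not do so.

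For comparison, the paper's proof avoids convex geometry entirely and uses the dynamics of a single unbounded element $g\in G$ together with Howe--Moore. After reducing to the case where the linear part has no invariant vectors, one observes that $g$ must fix a point in $C$ or in $\partial C$ (since $C$ is proper \cat0). A fixed point in $C$ propagates to all of $G$ by a Mautner-type argument, contradicting the properness of the displacement. A fixed point in $\partial C$ means a non-zero vector fixed by the linear part of $g$, and then Moore's theorem forces a vector fixed by the linear part of all of $G$, contradicting the initial reduction. This route never needs to control the dimension of the affine span of $C$.
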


\begin{proof}
We can assume that the linear part of the action has no $G$-invariant vectors.  Indeed, since $G$ has no non-zero additive characters, the $G$-invariant part splits off a summand preserved by the isometric action. Suppose for a contradiction that $G$ preserves a non-empty closed locally compact convex subset $C\se V$ but has no global fixed point. In particular, $G$ is non-compact and hence contains an element $g$ generating an unbounded subgroup of $G$ (e.g.\ a split torus element). Since $C$ is a proper \cat0 space, $g$ fixes a point either in $C$ or in its boundary. In the former case, all of $G$ will have a fixed point by a Mautner type argument, which is absurd (we already recalled earlier that unbounded $G$-actions are proper). In the latter case, the linear part of the action of $g$ fixes a vector. This implies that the linear $G$-representation has a fixed vector by Moore's theorem (\cite{moore}; see also~\cite[5.5]{howe-moore}), contrary to our preliminary reduction step.
\end{proof}

\subsection{\texorpdfstring{The strong topology on pointed metric $G$-spaces}{The strong topology on pointed metric G-spaces}}\label{sec:GH}
For an introduction to the \emph{pointed Gromov--Hausdorff topology}, see~\cite{bestvina,pauling} as well as~\cite[I.5]{bh} and~\cite[chap.~3]{gromov0}. It builds upon the Hausdorff distance defined in~\cite{hausdorff} (Kap.~VIII \S\,6, pp.~293 sqq). We shall now introduce a natural ``strong'' topology for actions on metric spaces, which is an equivariant version of the pointed Gromov--Hausdorff topology. Regarding terminology, the reader should be warned that the ``equivariant Gromov--Hausdorff topology'' introduced by Paulin~\cite{pauling} is very different from the ``strong'' topology used below; for instance, Paulin's topology does not imply convergence of the underlying spaces.

\medskip 
Recall that the \emph{coradius} of a subset $A$ of a metric space $X$ is the (possibly infinite) number
$$\corad_X(A)\ =\ \sup_{x\in X}\inf_{a\in A} d(x,a).$$
From now on and all along Section~\ref{sec:GH}, we will assume that all metric spaces are {\it geodesic}. If $X$ is a metric space and $x$ a point in $X$ we will denote by $B_{X}(x,r)$ the closed ball of radius $r$ around $x$ in $X$.

\bigskip
Let $G$ be a topological group. A \emph{pointed metric $G$-space} is a triple $(X, *, \pi)$ where $X$ is a metric space, $*\in X$ and $\pi$ is a continuous $G$-action by isometries on $X$; we denote also by $\pi\colon G\to \Isom(X)$ the corresponding homomorphism and write often $gx$ for $\pi(g)(x)$. We define a topology on the class of all pointed metric $G$-spaces as follows.

\begin{defi}\label{def:topol}
Let $(X, *, \pi)$ be a pointed metric $G$-space. Given $R>0$, $\epsilon>0$ and a compact subset $M\se G$ such that $1\in M$, let $U_{R, \epsilon, M}(X, *, \pi)$ be the class of all pointed metric $G$-spaces $(X', *', \pi')$ such that there is an \emph{$(M, \epsilon)$-map} $f$ relative to the balls $B_{X'}(*', R)$ and $B_{X}(*, R)$. This shall mean, by definition, that $f$ is a map $f:X'\to X$ such that

\smallskip
\begin{enumerate}
\item $f(*') = *$,
\item the Hausdorff distance between $f(B_{X'}(*', R))$ and $B_{X}(*, R)$ is $<\epsilon$,
\item $\Big| d(f(x), f(y)) - d(x,y) \Big| <\epsilon$ for all $x,y\in \pi'(M) \big(B_{X'}(*', R)\big)$,
\item $d(g f(x), f(g x)) < \epsilon$ for all $x\in B_{X'}(*', R)$ and all $g\in M$.\label{presqueequivariance}
\end{enumerate}

\smallskip
It is clear that these $U_{R, \epsilon, M}(X, *, \pi)$ form a neighborhood base. We shall refer to this as the topology of \emph{strong convergence} of pointed metric $G$-spaces.
\end{defi}

 This topology is very close to the one considered by Fukaya~\cite{fukaya}. Notice that when $G$ is the trivial group, we essentially recover the pointed Gromov--Hausdorff convergence of metric spaces~\cite{gromov0}. Observe also that, if $G$ is $\sigma$-compact (e.g.\ locally compact and connected or second countable), then every pointed metric $G$-space has a countable neighborhood base. 

\smallskip

Before turning to the proof of the continuity of the map $t\in (0,1]\mapsto (C_{t},\ast,\varrho_{t})$ for the topology just defined, we prove a preliminary fact concerning isometries of \cat{-1} spaces. We recall that a hyperbolic isometry $g$ of a complete \cat{-1} space $X$ admits a unique axis $L_g\se X$. We shall denote by $\xi^+_g, \xi^-_g\in\partial X$ its forward, respectively backward endpoints and by $\ell_X(g)$ the translation length of $g$. We recall, but shall not use, that for any $x\in X$ one has $g^n x\to\xi^\pm_g$ as $n\to\pm\infty$.  

\begin{prop}\label{prop:xi_cont}
Let $X$ be a complete \cat{-1} space and endow the set $H$ of its hyperbolic isometries with the topology of pointwise convergence. Then the map
$$\begin{array}{rcl}
H & \lra & \partial X\\
 g & \longmapsto & \xi^+_g\\
 \end{array}$$
is continuous on each level-set of $\ell_X\colon H\to \R_+$.
\end{prop}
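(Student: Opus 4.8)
The plan is to fix a level-set, say $\ell_X^{-1}(\ell_0)$ with $\ell_0>0$, work with a convergent sequence $g_j\to g$ in $H$ of isometries all having translation length $\ell_0$, and show $\xi^+_{g_j}\to\xi^+_g$ in the cone topology on $\partial X$. First I would record the geometric picture: each $g_j$ has a unique axis $L_{g_j}$, and since $\ell_X(g_j)=\ell_0>0$ is bounded away from zero while $g_j$ converges pointwise on $X$, the axes $L_{g_j}$ stay within a bounded distance of a fixed basepoint $o$. Indeed, pick any $x\in X$; then $d(x,g_j x)\to d(x,gx)$ is bounded, and in a \cat{-1} space the displacement of a point at distance $\rho$ from the axis of a hyperbolic isometry of translation length $\ell_0$ grows like $\ell_0+O(e^{-\rho})$ off the axis — more precisely $d(x,g_j x)\ge \ell_0$ with a quantitative gap forcing $d(x,L_{g_j})$ to be bounded uniformly in $j$. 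Hence we may choose points $p_j\in L_{g_j}$ with $d(o,p_j)\le R_0$ for a uniform $R_0$.

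Next I would pass to the boundary. Since $\overline X = X\cup\partial X$ with the cone topology need not be compact in the infinite-dimensional setting, I cannot simply extract a convergent subsequence of $\xi^+_{g_j}$ in $\overline X$; instead the argument should be direct. Fix the basepoint $o$ and the point $p_j\in L_{g_j}$ near $o$ as above. The forward ray of $L_{g_j}$ from $p_j$ is the geodesic ray $r_j(m) = $ (the point at distance $m$ along $L_{g_j}$ in the forward direction), and $\xi^+_{g_j}=\lim_{m\to\infty} r_j(m)$. The key estimate is that for $m$ a large multiple of $\ell_0$, say $m = N\ell_0$, the point $r_j(m)$ is within a bounded (in fact $o(1)$ as $N\to\infty$, by \cat{-1} convexity) distance of $g_j^N p_j$. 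Now $g_j^N\to g^N$ pointwise (composition is continuous for pointwise convergence on a fixed number of factors) and $p_j\to$ some point $p$ on $L_g$ after passing to a subsequence — or better, one shows directly $d(p_j, L_g)\to 0$ using that $d(p_j, g_j p_j)=\ell_0$ and $g_j p_j\to g p$ forces the limit configuration to realize the translation length of $g$, which by uniqueness of the axis lands on $L_g$. Therefore $g_j^N p_j\to g^N p$ for each fixed $N$, and $g^N p$ lies at distance $\approx N\ell_0$ along $L_g$, converging to $\xi^+_g$ as $N\to\infty$.

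Putting this together via a standard $3\varepsilon$-argument on the cone topology: to check $\xi^+_{g_j}\to\xi^+_g$, it suffices to show that for every $\varepsilon>0$ and every $T>0$ the initial segments of length $T$ of the geodesic rays $[o,\xi^+_{g_j})$ and $[o,\xi^+_g)$ are eventually $\varepsilon$-close. One chooses $N$ large enough that $g^N p$ is far enough along $L_g$ that the geodesic $[o, g^N p]$ fellow-travels $[o,\xi^+_g)$ on the segment of length $T$ to within $\varepsilon/3$ (this uses \cat{-1}, or even \cat0, comparison and that $\xi^+_g=\lim_N g^N p$); then for $j$ large, $d(g_j^N p_j, g^N p)<\varepsilon/3$ and $p_j$ is $\varepsilon/3$-close to the configuration near $o$, so $[o, g_j^N p_j]$ fellow-travels both $[o, g^N p]$ and $[o,\xi^+_{g_j})$ on the length-$T$ segment; combining these gives the desired $\varepsilon$-closeness. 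Convexity of the distance function in \cat0 spaces is what makes each of these fellow-traveling comparisons uniform.

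\textbf{Main obstacle.} The delicate point is the uniform bound $d(o, L_{g_j})\le R_0$ and the convergence $d(p_j, L_g)\to 0$: one must quantify how the minimal displacement $\ell_0$ of $g_j$ is approached on and off the axis in a \cat{-1} space, and rule out the axes escaping to infinity while the pointwise-convergence hypothesis persists. The cleanest route is to use the explicit convexity/hyperbolicity estimate for the displacement function $x\mapsto d(x, g_j x)$ of a hyperbolic isometry in a \cat{-1} space — it is a strictly convex function whose minimum value is $\ell_0$ and which grows at a definite rate (comparable to $2\,\mathrm{arcsinh}(\cosh(d(x,L_{g_j})/2)\sinh(\ell_0/2)\,)$-type bounds from the hyperbolic plane) away from the axis — together with the fact that $d(o, g_j o)\to d(o, go)<\infty$ is a uniform bound. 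Since the translation length is held fixed on the level-set, this gap estimate has a uniform constant, which is exactly what prevents the axes from drifting off and delivers both the boundedness and the limiting identification on $L_g$.
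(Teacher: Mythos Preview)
Your approach is essentially the paper's: control the distance to the axis via the displacement estimate (your ``$\mathrm{arcsinh}$-type bound'' is exactly the paper's Lemma~\ref{lem:comp}), then use iterates $g_j^N$ on axis points to approximate the attracting fixed point and conclude by the cone-topology definition. The paper organizes it with neighbourhoods instead of sequences and a telescoping estimate $d(g^n p_g, h^n p_g)<D/2$, but the substance is the same.

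One technical point to tighten: your step ``$p_j\to p\in L_g$ (or $d(p_j,L_g)\to 0$)'' is not justified for an \emph{arbitrary} choice of $p_j\in L_{g_j}$ near $o$, and your suggested justification ``$g_j p_j\to g p$'' is circular since it already presupposes $p_j\to p$; in a non--locally-compact $X$ you cannot simply extract a subsequence either. The paper sidesteps this by taking $p_g\in L_g$ fixed and setting $p_h$ to be the \emph{projection of $p_g$ onto $L_h$}. Then $d(p_g,p_h)=d(p_g,L_h)\le c(d(hp_g,p_g))$ by the displacement lemma applied at the fixed point $p_g$, and since $h\mapsto d(hp_g,p_g)$ is continuous with value $\ell_0$ at $h=g$ and $c(\ell_0)=0$, one gets $d(p_g,p_h)\to 0$ directly. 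With this choice of basepoints your $3\varepsilon$ argument goes through cleanly, and $d(g_j^N p_j,g^N p_g)\le d(p_j,p_g)+d(g_j^N p_g,g^N p_g)\to 0$ for each fixed $N$.
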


\noindent
A more refined analysis reveals in fact that continuity holds whenever $\ell_X$ remains bounded away from zero, but we will not need this. 

This proposition will be used later (see Lemma~\ref{continuityboundary}) in the following way: to prove that the spaces $(C_{t},\ast)$ depend continuously on $t$ we will first prove that the limit sets $f_{t}(\partial \HH)\subset \partial \HHI$ depend continuously on $t$. But $f_{t}(\partial \HH)$ is nothing else than the union of the attractive fixed points in $\partial \HHI$ of the hyperbolic isometries in 
$$\varrho_{t}(\Isomn).$$ 
The above proposition will thus be useful to establish the continuity of the map $t \mapsto f_{t}(\partial \HH)$. 

\medskip

We now turn to the proof of Proposition~\ref{prop:xi_cont}. We shall use the following fact from \cat{-1} geometry.

\begin{lemma}\label{lem:comp}
Let $h$ be a hyperbolic isometry of a complete \cat{-1} space $X$. There is a continuous, increasing function $c\colon[\ell_X(h), \infty) \to[0, \infty)$ depending only on $\ell_X(h)$ with $c(\ell_X(h))=0$ and such that for any $x\in X$ we have $d(x, L_h) \leq c(d(h x, x))$.
\end{lemma}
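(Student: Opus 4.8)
The plan is to quantify the standard \cat{-1} fact that a point far from the axis of a hyperbolic isometry $h$ is moved a long way by $h$, with the rate depending only on the translation length $\ell_X(h)$. First I would set $\ell = \ell_X(h)$ and, for a point $x\in X$, let $p$ be the nearest-point projection of $x$ onto the axis $L_h$, so that $d(x,L_h) = d(x,p)$ and, since $h$ acts on $L_h$ by translation of length $\ell$, one has $d(p, hp) = \ell$. The goal is an inequality of the form $d(hx,x)\ge \phi_\ell(d(x,p))$ for some continuous increasing $\phi_\ell$ with $\phi_\ell(0)=\ell$; inverting $\phi_\ell$ on $[\ell,\infty)$ then gives the desired $c$.

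The key step is a comparison computation. Consider the geodesic quadrilateral with vertices $x$, $p$, $hp$, $hx$: its sides are $[x,p]$ (length $r:=d(x,p)$), $[p,hp]$ (length $\ell$, along $L_h$), $[hp,hx]$ (length $r$, the $h$-image of $[x,p]$), and the diagonal $[x,hx]$. Because $p$ is the projection of $x$ onto $L_h$, the angle at $p$ between $[p,x]$ and $[p,hp]$ is at least $\pi/2$, and likewise (applying $h$) the angle at $hp$ between $[hp,hx]$ and $[hp,p]$ is at least $\pi/2$. I would compare this configuration to the corresponding one in the hyperbolic plane $\mathbf{H}^2$: take points $\bar p, \overline{hp}$ at distance $\ell$ on a geodesic, erect perpendicular geodesic segments of length $r$ on the same side at each endpoint, with endpoints $\bar x, \overline{hx}$. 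By the \cat{-1} inequality (in the form of comparison for quadrilaterals, or by subdividing into two triangles via a diagonal and applying the Alexandrov angle comparison twice), one gets $d(x,hx)\ge d_{\mathbf{H}^2}(\bar x, \overline{hx})$. An explicit hyperbolic trigonometry formula for the Saccheri-type quadrilateral gives
$$\cosh d_{\mathbf{H}^2}(\bar x, \overline{hx}) \;=\; \cosh^2(r)\cosh(\ell) - \sinh^2(r)\,,$$
which I would denote $F(r,\ell)$. Then $d(hx,x)\ge \cosh^{-1}F(r,\ell)$, and for fixed $\ell$ the right-hand side is a strictly increasing continuous function of $r\ge 0$ taking the value $\ell$ at $r=0$ (since $F(0,\ell)=\cosh\ell$) and tending to $\infty$; its inverse is the required function $c$. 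One checks continuity and monotonicity of $c$ in $\ell$ directly from the formula, or simply observes that it suffices for $c$ to depend on $\ell$ in any continuous increasing way, which the explicit expression provides.

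The main obstacle is getting the comparison inequality $d(x,hx)\ge d_{\mathbf{H}^2}(\bar x,\overline{hx})$ cleanly, since a quadrilateral comparison in \cat{-1} spaces is slightly less standard than triangle comparison. I expect to handle this by inserting the diagonal $[p,hx]$ (or $[x,hp]$), splitting the quadrilateral into two geodesic triangles, and applying the \cat{-1} triangle comparison together with the Alexandrov angle estimate at the two right-angle-or-more vertices $p$ and $hp$ — the subtlety being to track that the comparison angles in $\mathbf{H}^2$ can be taken $\ge\pi/2$ so that the monotonicity of the hyperbolic side-length-versus-angle law pushes the estimate in the right direction. Alternatively, one can avoid the quadrilateral entirely: let $m$ be the midpoint of $[x,hx]$; then $d(x,hx)=2d(x,m)\ge 2d(x,L_h)$ is too weak, so instead I would use that the function $y\mapsto d(hy,y)$ is convex along $L_h$ with minimum $\ell$ and compare its growth off the axis via the convexity of the metric, bounding $d(hx,x)$ below by a hyperbolic-plane model computation as above. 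Either route reduces to elementary hyperbolic trigonometry once the comparison is set up, so the content is entirely in choosing the cleanest comparison diagram.
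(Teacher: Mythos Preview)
Your approach is correct and essentially the same as the paper's: both set up the quadrilateral $x,\,p,\,hp,\,hx$ with right-angle projections at $p$ and $hp$ and reduce to an $\mathbf{H}^2$ comparison, with your explicit Saccheri formula $\sinh(s/2)=\cosh(r)\sinh(\ell/2)$ giving the function $c$ the paper leaves implicit. The only difference is that where you propose to obtain the quadrilateral comparison by splitting along a diagonal and tracking angles, the paper invokes the ready-made notion of a \emph{sub-embedding} from Bridson--Haefliger~\cite[II.1.10]{bh}, which packages exactly that triangle-gluing argument and cleanly yields comparison angles $\geq\pi/2$ at $\bar z$ and $\bar t$.
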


\begin{proof}[Proof of the lemma]
Fix $r>0$. Elementary geometry of the hyperbolic plane $\mathbf{H}^2$ shows that there is a continuous, increasing function $c\colon[r, \infty) \to[0, \infty)$ with $c(r)=0$ and satisfying the following property. If $\bar x,\bar y,\bar z,\bar t$ are four points in $\mathbf{H}^2$ with $d(\bar z,\bar t)=r$ and with (hyperbolic) angles $\angle_{\bar z}(\bar x,\bar t)$ and $\angle_{\bar t}(\bar y,\bar z)$ both at least~$\pi/2$, then $d(\bar x,\bar z)\leq c(d(\bar x,\bar y))$ (and $d(\bar x,\bar y)\geq r$). In fact, we will use it only for the special case where $d(\bar x,\bar z) = d(\bar y,\bar t)$.

To prove the lemma, we set $r=\ell_X(h)$ and choose $\bar x,\bar y,\bar z,\bar t\in \mathbf{H}^2$ to be a \emph{sub-embedding} in the sense of~\cite[II.1.10]{bh} for the following four points: $x$, $y=hx$, $z$ the projection of $x$ to $L_h$ and $t=hz$, which is he projection of $y$ to $L_h$. The definition of a sub-embedding is that one should have
$$d(x,y)=d(\bar x,\bar y), \kern2mm d(z,t)=d(\bar z,\bar t), \kern2mm d(x,z)=d(\bar x,\bar z), \kern2mm d(y, t)=d(\bar y,\bar t)$$
and
$$d(x, t) \leq d(\bar x, \bar t), \kern2mm d(y, z) \leq d(\bar y, \bar z).$$
In particular, since the Alexandrov angles $\angle_{z}(x,t)$ and $\angle_{t}(y,z)$ are~$\geq \pi/2$ by a property of projections, we conclude that the points $\bar x,\bar y,\bar z,\bar t$ are as required above; the lemma follows.
\end{proof}

\begin{proof}[Proof of Proposition~\ref{prop:xi_cont}]
Fix $g\in H$ and $p_g \in L_g$. For any $h\in H$, we shall denote by $p_h$ the projection of $p_g$ to $L_h$; thus $h^n p_h$ travels on $L_h$ towards $\xi^+_h$. Fix some $D>0$. In view of the definition of the cone topology on $\partial X$, it suffices to prove the following claim: for any $n\in \mathbf{N}$, there is a neighborhood $U$ of $g$ in its level-set in $H$ such that $d(p_g, p_h)$ and $d(g^n p_g, h^n p_h)$ are both at most $D$.

The function $h\mapsto d(h p_g, p_g)$ is continuous; therefore, Lemma~\ref{lem:comp} shows that we can find $U$ such that $d(p_g, p_h)<D/2$ for all $h\in U$. We further restrict $U$ by requiring $d(g(g^j p_g), h(g^j p_g))<D/2n$ for all $j=0, \ldots, n-1$. This implies
$$d(h^{n-(j+1)} g^{j+1} p_g, h^{n-j} g^j p_g) \ < \ \frac{D}{2n} \kern5mm\forall\, j=0, \ldots, n-1$$
and hence $d(g^n p_g, h^n p_g)< D/2$. We conclude $d(g^n p_g, h^n p_h)< D$ and the claim is proved.
\end{proof}

Given a subset $A\se X$ of a geodesic metric space $X$, let $[A]$ be its closed convex hull. We recall that $X$ is \emph{non-positively curved in the sense of Busemann}~\cite{busemann48} if the distance function is convex (in particular the space is uniquely geodesic); see~\cite[II.1.18]{bh}.

\begin{lemma}\label{lemma:buse}
Suppose that $X$ is non-positively curved in the sense of Busemann. Then the operation $A\mapsto [A]$ is $1$-Lipschitz for the Hausdorff distance on bounded non-empty sets $A\se X$.
\end{lemma}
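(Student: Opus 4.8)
The statement to prove is that if $X$ is non-positively curved in the sense of Busemann, then the closed-convex-hull operation $A\mapsto[A]$ is $1$-Lipschitz for the Hausdorff distance on bounded non-empty subsets. First I would unwind what needs to be shown: given two bounded non-empty sets $A,B\se X$ with Hausdorff distance $d_H(A,B)=\delta$, I must show $d_H([A],[B])\le\delta$. By symmetry it suffices to show that $[A]$ lies in the closed $\delta$-neighbourhood of $[B]$, i.e.\ that for every $x\in[A]$ there is $y\in[B]$ with $d(x,y)\le\delta$. Equivalently, writing $N_\delta(S)$ for the closed $\delta$-neighbourhood of a set $S$, I want $[A]\se N_\delta([B])$, and by hypothesis $A\se N_\delta(B)\se N_\delta([B])$.

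The key point is the following: \emph{in a Busemann non-positively curved space, the closed $\delta$-neighbourhood of a convex set is convex.} Granting this, the proof is immediate: $N_\delta([B])$ is a closed convex set containing $A$, hence it contains the closed convex hull $[A]$; this gives $[A]\se N_\delta([B])$, and the symmetric inclusion gives $[B]\se N_\delta([A])$, whence $d_H([A],[B])\le\delta=d_H(A,B)$. So the whole content is the convexity of metric neighbourhoods of convex sets. To establish that, I would take two points $x_0,x_1\in N_\delta([B])$, pick $b_0,b_1\in[B]$ with $d(x_i,b_i)\le\delta$, and consider the geodesics $\gamma(s)$ from $x_0$ to $x_1$ and $\beta(s)$ from $b_0$ to $b_1$, both parametrised proportionally to arc length on $[0,1]$. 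Since $[B]$ is convex, $\beta(s)\in[B]$ for all $s$; and by convexity of the distance function in the Busemann sense, the function $s\mapsto d(\gamma(s),\beta(s))$ is convex on $[0,1]$, hence bounded above by $\max(d(x_0,b_0),d(x_1,b_1))\le\delta$. Therefore $d(\gamma(s),[B])\le\delta$ for all $s$, i.e.\ the geodesic $\gamma$ stays in $N_\delta([B])$, proving that neighbourhood convex. (Here I use that in a Busemann space geodesics are unique, so "the" geodesic makes sense, and that convexity of the metric means precisely that $s\mapsto d(\alpha(s),\alpha'(s))$ is convex for any pair of affinely reparametrised geodesics $\alpha,\alpha'$; see~\cite[II.1.18]{bh} and the surrounding discussion.)

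One small technical wrinkle is whether one should work with open or closed neighbourhoods and whether the convex hull is closed or not; since $[A]$ is defined to be the \emph{closed} convex hull, and closed $\delta$-neighbourhoods of closed sets are closed, everything is consistent, and the convexity argument above applies verbatim to the closed neighbourhood (the convex function $s\mapsto d(\gamma(s),\beta(s))$ attains values $\le\delta$, closedness is not even needed for that step). Another minor point: I should note that $N_\delta([B])$ is nonempty and that the geodesic between any two of its points exists and is unique — both follow from $X$ being a geodesic Busemann space. I do not expect any serious obstacle here; the only thing requiring a moment's care is correctly invoking the convexity-of-the-metric hypothesis in the form "$d$ composed with a pair of geodesics is convex", which is exactly the definition of Busemann non-positive curvature and is recorded in~\cite[II.1.18]{bh}. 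The proof is therefore short, and the "hard part" is really just isolating the right intermediate lemma (neighbourhoods of convex sets are convex) rather than any delicate estimate.
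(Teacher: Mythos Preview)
Your proof is correct and takes a genuinely different route from the paper's. The paper argues constructively: it writes $[A]$ as the closure of the increasing union $\bigcup_n A^{(n)}$, where $A^{(0)}=A$ and $A^{(n+1)}$ is the set of midpoints of pairs from $A^{(n)}$; then it observes that $A\mapsto A'$ (the midpoint operation) is $1$-Lipschitz for the Hausdorff distance, which is an immediate consequence of the convexity of the metric. Iterating and passing to the closure yields the result. Your argument instead isolates the lemma that closed tubular neighbourhoods of convex sets are convex in Busemann spaces, and then the $1$-Lipschitz property drops out in one line by the very definition of the closed convex hull. Your approach is arguably cleaner --- it avoids having to justify that the iterated-midpoint closure really recovers $[A]$ --- while the paper's approach makes the mechanism (convexity applied to midpoints) more explicit and transparently iterable. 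One very minor point: when you pick $b_i\in[B]$ with $d(x_i,b_i)\le\delta$, the infimum need not be attained in a general Busemann space; but your own parenthetical remark already indicates how to handle this, namely by choosing $b_i$ at distance $\le\delta+\epsilon$ and letting $\epsilon\to 0$.
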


\begin{proof}
For any subset $A$, let $A'$ be the collection of midpoints of all pairs of points in $A$. We thus obtain a sequence $A^{(0)}=A$, $A^{(n+1)} =(A^{(n)})' \supseteq A^{(n)}$. Moreover, $[A]$ coincides with the closure of the union of all $A^{(n)}$. Therefore, it suffices to show that the operation $A\mapsto A'$ is $1$-Lipschitz for the Hausdorff distance on bounded non-empty sets. This follows from the definition of convexity.
\end{proof}

Before going back to the study of the $C_{t}$, we will need one more general fact about our strong topology, which will be used in the proof of Theorem~\ref{continuous}. Recall that an action $\pi$ is called (metrically) \emph{proper} if the function $d(\pi(g)x, x)$ tends to infinity as $g$ leaves compact subsets of $G$ (for some, equivalently any, $x\in X$). 

\begin{prop}\label{coradsemicont}
The quantity $\corad_X(\pi(G)(*))$ is upper semi-continuous at $(X, *, \pi)$ on the class of all pointed metric $G$-spaces if $\pi$ is proper.
\end{prop}

\begin{proof}
Let $(X, *, \pi)$ and $\delta>0$ be given; write $r=\corad_X(\pi(G)(*))$, which we can assume finite since otherwise the proposition is void. We choose $0<\epsilon\leq\delta/4$, $R>2 r+ 6\epsilon$ and a compact set $M\se G$ so large that $g*\in B_{X}(*, R)$ implies $g\in M$. This is possible since $\pi$ is proper.

Suppose for a contradiction that $(X', *', \pi')$ is in $U_{R, \epsilon, M}(X, *, \pi)$ but contains a point $y$ with $d(\pi'(G)(*'), y) \geq r+\delta$. Upon translating, we can moreover assume $d(*', y) < d(\pi'(G)(*'), y) + \epsilon$. Let $z$ be the point at distance $r+4\epsilon$ from~$*'$ on a geodesic connecting~$*'$ to $y$; this point exists since $4\epsilon \leq \delta$.

Let $f\colon X'\to X$ be as in the definition of $U_{R, \epsilon, M}(X, *, \pi)$. Since $d(z, *')=r+4\epsilon$, the point $z$ is in $B_{X'}(*', R)$ and we have $d(f(z), *)<r+5\epsilon$. Let $g\in G$ be such that $d(g *, f(z))<r+\epsilon$. Notice that $g*$ belongs to the ball $B_{X}(*, R)$ by the choice of $R$; it follows that $g\in M$. Now we have
\begin{multline*}
d(g *', z) \ <\ d(f(g *'), f(z)) + \epsilon \ <\  d(g f(*'), f(z)) + 2 \epsilon\\
=\ d(g *, f(z)) + 2 \epsilon \ <\ r+3\epsilon.
\end{multline*}
It follows
\begin{multline*}
d(g *', y) \ \leq\ d(y,z) + d(g *', z)  \ <\ d(y,z) +r+3\epsilon \\
=\ d(*', y) - d(*', z)+r+3\epsilon \ =\ d(*', y) - \epsilon \ <\ d(\pi'(G)(*'), y),
\end{multline*}
which is impossible. This contradiction finishes the proof.
\end{proof}


\subsection{Continuity of the family of actions}\label{preuvecontinuity}

Having introduced our strong topology in the previous subsection, we now want to prove the continuity of the family 
$$t\longmapsto (C_{t},\ast, \varrho_{t})$$
for $t\in (0,1]$. Before establishing this, we shall need:
\begin{itemize}
\item first, to see all the convex sets $C_{t}$ as subsets of a fixed infinite dimensional hyperbolic space, in other words we will need to chose some identification between the spaces $\HHI_{t}$ and the model $\HHI$ described in the introduction (recall from Section~\ref{qfhs} that the definition of $\HHI_{t}$ involved a certain completion depending on $t$);
\item second, to check carefully the continuity at $t=1$ since the representation $\pi_{s}$ degenerates to a reducible one as $t$ goes to $1$. 
\end{itemize}

\noindent

We will address these two points by proving: 

\begin{prop}\label{identification_rho}
There exists a family $(\widetilde{\varrho}_{t})_{0<t\le 1}$of representations of $\Isomn$ into the isometry group $\Isomi$ of $\HHI$ with the following properties:
\begin{enumerate}
\item For $t\in (0,1)$, $\widetilde{\varrho}_{t}$ is conjugated to the action of $\Isomn$ on $\HHI_{t}$ given by $\varrho_{t}$.\label{conjoriginal}
\item The action $\widetilde{\varrho}_{1}$ preserves an $n$-dimensional totally geodesic subspace (on which the action is conjugated to the standard $\Isomn$-action on $\HH$).\label{boutintervalle} 
\item For each $p\in \HHI$, the map  $(t,g)\mapsto \widetilde{\varrho}_{t}(g)(p)$ is continuous on $(0,1]\times \Isomn$.\label{supercontinuity} 
\item The unique $\widetilde{\varrho}_{t}(K)$-fixed point does not depend on $t\in (0,1)$; it is contained in the invariant $n$-dimensional totally geodesic subspace for $\widetilde{\varrho}_{1}$.\label{etoilenebougepas}
\end{enumerate}
\end{prop}

Abusing notations, we will still denote by $\ast$ the unique $\widetilde{\varrho}_{t}(K)$-fixed point ($0<t<1$) in $\HHI$. We postpone the proof of Proposition~\ref{identification_rho}, and first prove Theorem~\ref{continuous} assuming the proposition.

\medskip

For $t\in (0,1)$, let $i_{t}\colon  \HH \to \HHI$ be the unique $\rhonew_{t}$-equivariant map from $\HH$ to $\HHI$. We will entertain a slight abuse of notation and still denote by $C_{t}$ the smallest invariant convex set for $\rhonew_{t}$ ($0<t<1$). Finally we denote by $C_{1}\subset \HHI$ the $\rhonew_1(\Isomn)$-invariant totally geodesic subspace and by $i_1 : \HH \to C_1\subset \HHI$ the unique $\rhonew_1(\Isomn)$-equivariant map.

\begin{lemma}\label{continuityboundary}
The map
$$\begin{array}{rll}
(0,1] \times \partial \HH & \lra \partial \HHI \\
(t,\xi) & \longmapsto i_{t}(\xi) \\
\end{array}$$
\noindent is continuous.
\end{lemma}

\begin{proof}
Since, for a fixed $t$, the image of $i_{t}$ is equal to the orbit of a point of $\HHI$ under $\rhonew_{t}(K)$, it is enough to prove that for a fixed $\xi \in \partial \HH$, the map $t\mapsto i_{t}(\xi)$ is continuous. Choose a one-parameter semi-group $\{g_r\}_{r\geq 0}$ of hyperbolic elements of $\Isomn$ whose attracting fixed point in $\partial \HH$ is $\xi$, normalized by $\ell_{\HH}(g_r)=r$. According to Proposition~\ref{identification_rho}\eqref{supercontinuity}, $\rhonew_{t}(g_{1/t})$ is pointwise continuous in $t\in (0, 1]$. Moreover, since $\varrho_{t}(g_{1/t})$ has constant translation length, so does $\rhonew_{t}(g_{1/t})$. Therefore, Proposition~\ref{prop:xi_cont} ensures that $t\mapsto i_{t}(\xi)$ is continuous because $i_{t}(\xi)$ is the attracting fixed point of $\rhonew_{t}(g_{1/t})$.
\end{proof}

We are finally ready for the main result of this subsection.

\begin{prop}\label{enfincasetermine}
The map $(0,1]\ni t \mapsto C_{t}$ is continuous for the strong topology on pointed metric $\Isomn$-spaces.
\end{prop}

\begin{proof}
Fix $t_0\in (0, 1]$, $R>0$, $\epsilon >0$ and a non-empty compact subset $M$ of $\Isomn$. Let $D>0$ be large enough so that $\rhonew_{t}(M) (B_{\HHI}(*, R)) \se {B_{\HHI}(*, R+D)}$ whenever $|t-t_{0}|<\epsilon$. We work in the Klein model for $\HHI$, hence $\HHI \cup \partial \HHI$ is identified with the closed unit ball of a Hilbert space $\mathscr{H}$ in such a way that $\ast$ coincides with the origin of $\sH$. Notice that hyperbolic balls centered at~$\ast$ correspond to Euclidean balls centered at the origin (and of radius~$<1$). Moreover, the hyperbolic and Euclidean distances on $B_{\HHI}(\ast, R+D)$ are bi-Lipschitz to each other, so that we can argue with the Euclidean one for the remainder of the proof.

For any $t\in (0, 1]$ we denote by $f\colon  C_t\to C_{t_0}$ the nearest point projection (in $\sH$) and choose this map for Definition~\ref{def:topol}. In view of the continuity of $\rhonew_{t}(g)$ in $t$ and $g$, the proposition will follow if we prove that when $t$ is close enough to $t_0$, the Hausdorff distance between $B_{C_t}(*, R+D)$ and $B_{C_{t_0}}(*, R+D)$ is less than, say,~$\epsilon/2$ (note that to check condition~\eqref{presqueequivariance} from Definition~\ref{def:topol}, one can use the local compactness of $C_{t_{0}}$ since the maps of Proposition~\ref{identification_rho}\eqref{supercontinuity} have not been proved to be uniformly continuous in $p$).

Therefore, it suffices to establish the continuity of the map $t\mapsto C_t$ when $C_t$ is considered as a bounded subset of $\mathscr{H}$; equivalently, we can replace $C_t$ by its closure $\overline{C_t}$. At this point we recall that $\overline{C_t}$ can be seen as the (affine) closed convex hull of $i_t(\partial \HH)$, see Section~\ref{sec:Ct:def}. Since $\partial \HH$ is compact, Lemma~\ref{continuityboundary} implies that the map $t\mapsto  i_{t}(\partial \HH)$ is continuous on $(0, 1]$ for the Hausdorff topology on subsets of the Hilbert sphere. Therefore we are done by Lemma~\ref{lemma:buse}.
\end{proof}

At this point we have concluded the proof of Theorem~\ref{continuous}, except for the fact that $\corad_{C_{t}}(i_{t}(\HH))$ converges to $0$ as $t$ converges to $1$. But this follows from Proposition~\ref{coradsemicont}. 

\bigskip

We now turn to the proof of Proposition~\ref{identification_rho}. From now on, we will be constantly using both the parameter $t$ and the parameter $s$. We thus remind the reader that they are related by the equation $t=(n-1)(s-\frac{1}{2})$. To keep track of the parameter $t$ when dealing with the various bilinear forms we have at hand, we will denote by $B_{t}$ the symmetric bilinear form $\langle \cdot , \cdot \rangle_{s}$ introduced in Section~\ref{prelim-princeseries}. Finally, we will denote by $\vert \cdot \vert$ the ${\rm L}^{2}$-norm on $\LLB$ and if $v\in \LLB$ by $v_{k}$ the component of $v$ in the subspace $H^{k}\subset \LLB$. 

\medskip

We start with the following:

\begin{lemma}\label{OOO} Let $v\in \LLB$. 
\begin{itemize}
\item The map $(s,g) \mapsto \pi_{s}(g)(v)$ is continuous on $\R_{+}^{\ast}\times \Isomn$.
\item For each $s_{\ast}\in \R_{+}^{\ast}$, one can write: $\pi_{s}(g)(v)=\pi_{s_{\ast}}(g)(v)+(s-s_{\ast})\iota(s,g,v)$ where $\iota(s,g,v)$ is continuous on $(s_0,s_1]\times \Isomn$. In particular, $\pi_{s}(g)(v)$ is differentiable with respect to $s$.  
\end{itemize}
\end{lemma}
\begin{proof} Let $m(s):=\pi_{s}(g)(v)$. We can write:
$$m(s)=u(s,g)\cdot w(g),$$
where $w$ is the continuous function $g\mapsto \vert {\rm Jac}(g^{-1})\vert^{\frac{1}{2}} v\circ g^{-1}\in \LLB$ and $u(s,g)=\vert {\rm Jac}(g^{-1})\vert^{s}$. It therefore suffices to prove all the results of the lemma for $u$, {\it thought of as a map with values in ${\rm L}^{\infty}(\partial \HH)$} since the bilinear map
$$\begin{array}{ccc}
{\rm L}^{\infty}(\partial \HH) \times \LLB & \lra & \LLB \\
(v_1,v_2) & \longmapsto & v_1 \cdot v_2 .\\
\end{array}$$
is continuous. But now the statement of the lemma follows from the fact $u(s,g)(b)$ is smooth simultaneously in all its variables. This completes the proof.\end{proof}

To study the continuity at $t=1$, we will need the following proposition. Recall that the special values $s_{j}$ of the parameter $s$ were defined by $s_{j}=\frac{j}{n-1}+\frac{1}{2}$; they correspond to the value $j$ of the parameter $t$. 

\begin{prop}\label{reducibility} The representation $\pi_{s_{1}}$ preserves the space $V_{2}:=\bigoplus_{k=2}^{\infty}H^{k}$ as well as the inner product $U_{2}$ on $V_{2}$ defined by: 
$$U_{2}(u,v)=\underset{t\to 1}{{\rm lim}}\frac{-B_{t}(u,v)}{1-t}.$$
\end{prop}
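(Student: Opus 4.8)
The plan is to work with the explicit eigenvalues $\lambda_k(s)$ of the intertwining operator $L_s$ on $H^k$ and to take the limit $t\to 1$ (equivalently $s\to s_1$) coefficient by coefficient. Recall from~(\ref{valeurlambda}) that for $k\ge 1$,
$$
\lambda_k(s)\ =\ \prod_{j=0}^{k-1}\frac{j-t}{j+t+n-1}.
$$
The crucial observation is that the factor corresponding to $j=1$ is $\frac{1-t}{t+n}$, which vanishes at $t=1$; hence for every $k\ge 2$ the eigenvalue $\lambda_k(s)$ has a simple zero at $t=1$, while $\lambda_1(s)=\frac{-t}{t+n-1}$ stays bounded away from $0$ (it tends to $\frac{-1}{n}$). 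Since $B_t$ acts on $H^k$ as multiplication by $\lambda_k(s)$ (after the normalisation with $\lambda_0=1$), this already shows that, for $k\ge 2$, the quantity $-B_t(u,v)/(1-t)$ has a finite limit as $t\to 1$, given on $H^k$ by
$$
U_2(u,v)\big|_{H^k}\ =\ \Big(\lim_{t\to 1}\frac{-\lambda_k(s)}{1-t}\Big)\langle u,v\rangle_{L^2}\ =\ \frac{1}{t+n}\Big|_{t=1}\prod_{j\ne 1, 0\le j\le k-1}\frac{j-t}{j+t+n-1}\Big|_{t=1}\,\langle u,v\rangle_{L^2},
$$
and this limiting coefficient is strictly positive for every $k\ge 2$ (each remaining factor $\frac{j-t}{j+t+n-1}$ at $t=1$ has the sign of $j-1$, and for $j=0$ it is $\frac{-1}{n}$; one checks the product of signs over $j\in\{0,2,3,\dots,k-1\}$ is positive). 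So $U_2$ is a genuine positive-definite inner product on $V_2=\bigoplus_{k\ge 2}H^k$, diagonal with respect to the $K$-decomposition. One should also record that these coefficients decay (they are $O(\lambda_k/ (1-t))$ with the $j=1$ factor removed, hence of the same asymptotic order as the old $\lambda_k$), so that $U_2$ extends continuously to a positive-definite form on the completion.

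Next I would prove the invariance of $V_2$ and of $U_2$ under $\pi_{s_1}$. For $V_2$: the subspace $\bigoplus_{k=0}^\infty H^k$ carries the $\Isomn$-action $\pi_s$, and the operator $L_s\colon L^2(\partial\HH)\to L^2(\partial\HH)$ intertwines $\pi_s$ with $\pi_{-s}$; at $s=s_1$ the kernel of $L_{s_1}$ is exactly $\bigoplus_{k\ge 2}H^k=V_2$ because $\lambda_k(s_1)=0$ precisely for $k\ge 2$ and $\lambda_0,\lambda_1\ne 0$ there. Since $\ker L_{s_1}$ is $\pi_{s_1}$-invariant (an intertwiner has invariant kernel), $V_2$ is $\pi_{s_1}$-invariant. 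For $U_2$: for every $t<1$ the form $B_t=\langle\cdot,\cdot\rangle_s$ is $\pi_s$-invariant by construction, hence so is $-B_t/(1-t)$; the restriction of this form to the fixed subspace $V_2$ is $\pi_{s}|_{V_2}$-invariant for all $s\in(s_0,s_1)$. Now I let $s\to s_1$. By Proposition~\ref{identification} one can realise all these representations on the single Hilbert space $L^2(K/M)$ with matrix coefficients depending real-analytically (indeed explicitly, via the Poisson kernel) on $s$, and restricted to any fixed finite sum $\bigoplus_{k=0}^{N}H^k$ the action and the forms $-B_t/(1-t)$ converge as $t\to 1$ to $\pi_{s_1}$ and $U_2$ respectively. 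Passing to the limit in the identity $-B_t(\pi_s(g)u,\pi_s(g)v)/(1-t)=-B_t(u,v)/(1-t)$ for $u,v$ in such a finite sum gives $U_2(\pi_{s_1}(g)u,\pi_{s_1}(g)v)=U_2(u,v)$; density of the algebraic sum $\bigoplus_{k\ge 2}H^k$ in $V_2$, together with continuity of $\pi_{s_1}(g)$ and of $U_2$, extends this to all of $V_2$.

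The main obstacle, and the step requiring genuine care, is the interchange of the limit $t\to 1$ with the various operations: differentiating/dividing the family of bilinear forms, and passing to the limit inside the invariance identity on the \emph{completed} space $V_2$ rather than just on finite truncations. The clean way around this is exactly the diagonalisation just used: because every form in sight ($B_t$, $U_2$, and the $K$-action) is simultaneously block-diagonal with respect to the fixed orthogonal decomposition $\bigoplus_k H^k$ and the blocks are finite-dimensional, all the delicate analysis reduces to the elementary statement that the scalars $-\lambda_k(s)/(1-t)$ converge as $t\to 1$, uniformly on $k$ in the weak sense needed (monotone/absolutely summable control coming from the explicit product formula). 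The $\Isomn$-invariance then only needs to be verified on the dense algebraic span, where it is a limit of finite-dimensional identities, and extends by continuity. I would therefore present the argument in this order: (i) compute $\lim_{t\to 1}-\lambda_k(s)/(1-t)$ and check positivity and decay, defining $U_2$ and its completion; (ii) identify $V_2=\ker L_{s_1}$, hence its $\pi_{s_1}$-invariance; (iii) deduce invariance of $U_2$ by taking the limit of the $\pi_s$-invariance of $-B_t/(1-t)$ on finite truncations and extending by density and continuity.
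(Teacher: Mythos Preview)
Your treatment of parts~(i) and~(ii) is fine and essentially coincides with the paper's: the paper also identifies $V_2$ as the kernel of the degenerate form $B_1$ (equivalently of $L_{s_1}$) and computes the limiting eigenvalues from~(\ref{valeurlambda}).

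The gap is in part~(iii). You write that ``every form in sight ($B_t$, $U_2$, and the $K$-action) is simultaneously block-diagonal'' and that the invariance identity therefore ``is a limit of finite-dimensional identities''. But the object whose invariance you must check is the \emph{$\Isomn$-action}, and for $g\notin K$ the operator $\pi_s(g)$ is \emph{not} block-diagonal for $s<s_1$: it does not preserve $V_2$. Concretely, for $u\in V_2$ one has $\pi_s(g)u=a(t)+c(t)$ with $a(t)\in H^0\oplus H^1$ and $c(t)\in V_2$, and $a(t)\neq 0$ in general. Now on $H^0\oplus H^1$ the eigenvalues $\lambda_0=1$ and $\lambda_1=-t/(t+n-1)$ do \emph{not} vanish at $t=1$, so the rescaled form $-B_t/(1-t)$ blows up there. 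Hence in
\[
\frac{-B_t(\pi_s(g)u,\pi_s(g)u)}{1-t}\ =\ \frac{-B_t(a(t),a(t))}{1-t}\ +\ \frac{-B_t(c(t),c(t))}{1-t}
\]
the second term tends to $U_2(\pi_{s_1}(g)u,\pi_{s_1}(g)u)$ by your coefficient analysis, but the first term is of order $|a(t)|^2/(1-t)$ and you have given no reason why it tends to~$0$. Continuity in $s$ only gives $a(t)\to 0$; you need the quantitative estimate $|a(t)|=o(\sqrt{1-t})$.

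This is precisely the point the paper isolates: it splits $\pi_s(g)v=a(t)+c(t)$ and argues that the $a(t)$-contribution vanishes in the limit. (The needed rate, in fact $|a(t)|=O(1-t)$, is obtained in the paper by pairing against a fixed $w\in H^0\oplus H^1$ and using that $B_t(v,\pi_s(g^{-1})w)$ is $O(1-t)$ for $v\in V_2$; see Lemma~\ref{OOO}.) Your plan needs exactly this extra estimate; without it, the passage to the limit in the invariance identity is unjustified.
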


There is of course a similar result replacing $s_{1}$ by $s_{0}$ or by $s_{j}$ for some $j$ greater or equal to $2$, see~\cite{johnsonwallach}. In the course of the proof we will need the following observation. The forms $B_{t}$ satisfy (for $t\in (0,1)$ and $v\in \LLB$):
\begin{equation}\label{majoration}
\vert B_{t}(v,v)\vert \le \vert v\vert^{2}.
\end{equation} 
Indeed, given our normalization in~\eqref{eq:Ls}, all the coefficients of the intertwining operators $L_{s}$ are bounded by $1$, see~\eqref{valeurlambda}.

\begin{proof}
We already know that the form $B_{t}$ is invariant by $\pi_{s}$ for all $s>0$, even if $\pi_{s}$ is reducible. Hence $\pi_{s_{1}}$ must preserve the kernel of the form $B_{1}$, which is nothing else than the space $V_{2}$. Using Equation~(\ref{valeurlambda}), one checks that the bilinear form $U_{2}$ on $V_{2}$ defined by the limit above satisfies:
$$U_{2}(v,v)=\frac{1}{n(n+1)}\sum_{k\ge 2}\prod_{j=2}^{k-1}\frac{j-1}{j+n}\vert v_{k}\vert^{2}.$$
We now show that it is invariant by the (restriction of) the representation $\pi_{s_{1}}$ i.e. that 
$$U_{2}(v,v)=U_{2}(\pi_{s_{1}}(g)(v),\pi_{s_{1}}(g)(v)),$$
for $v\in V_{2}$ and $g\in \Isomn$. Write
$$\pi_{s}(g)(v)=a(t)+c(t)$$
where $a(t)\in H^{0}\oplus H^{1}$ and $c(t)\in V_{2}$. Note that $a(t)\to 0$ and $c(t)\to c(1)=\pi_{s_{1}}(g)(v)$ as $t\to 1$. Since the curve $t\mapsto a(t)$ is differentiable (Lemma~\ref{OOO}) and since $a(1)$ is $0$, we have $a(t)=O(1-t)$ as $t$ goes to $1$. Since $\pi_{s}(g)$ preserves $B_{t}$, we have:
\begin{equation}\label{formebilineairelimite}
\frac{-B_{t}(v,v)}{1-t}=\frac{-B_{t}(a(t),a(t))}{1-t}+\frac{-B_{t}(c(t),c(t))}{1-t}.
\end{equation}
According to~(\ref{majoration}), the first term is bounded by $(1-t)^{-1}\vert a(t)\vert^{2}=O(1-t)$ which converges to $0$. The second term is equal to
\begin{equation}\label{seriedelicate}
\frac{-B_{t}(c(t),c(t))}{1-t}=\frac{t}{(t+n-1)(t+n)}\sum_{k\ge 2}\left(\prod_{j=2}^{k-1}\frac{j-t}{j+t+n-1}\right)\vert c_{k}(t)\vert^{2}
\end{equation}
where we have written $c_{k}(t)$ for the component of $c(t)$ in $H^{k}$ (for $k\ge 2$). Let us consider the ``tail" of the above series:
$$S^{N,t}:=\frac{t}{(t+n-1)(t+n)}\sum_{k\ge N}\left(\prod_{j=2}^{k-1}\frac{j-t}{j+t+n-1}\right)\vert c_{k}(t)\vert^{2}.$$
We have $$\vert S^{N,t}\vert \le \sum_{k\ge N}\vert c_{k}(t)\vert^{2}\underset{t\to 1}{\longrightarrow}\sum_{k\ge N}\vert c_{k}(1)\vert^{2}$$
since $c(t)\to c(1)$. Hence $\limsup_{t\to 1} S^{N,t}$ can be made as small as we wish by taking $N$ large. This implies that one can take the limit term by term in~(\ref{seriedelicate}) and hence $-B_{t}(c(t),c(t))/(1-t)$ converges to $U_{2}(\pi_{s_{1}}(g)(v),\pi_{s_{1}}(g)(v))$. Summing up all pieces, we have that the left-hand side of~(\ref{formebilineairelimite}) converges to $U_{2}(v,v)$ and its right-hand side converges to $U_{2}(c(1),c(1))=U_{2}(\pi_{s_{1}}(g)(v),\pi_{s_{1}}(g)(v))$.\end{proof}

Each operator $\pi_{s_{1}}(g)\colon \LLB \to \LLB$ can be represented by a matrix associated to the decomposition 
$$\LLB = (H^{0}\oplus H^{1})\oplus V_{2}$$ 
which has the following form:
\begin{equation}\label{reptriangulaire}
\left(\begin{array}{cc}
A(g) & 0\\
C(g) & D(g)\\
\end{array}\right).
\end{equation}
From now on, we will denote by $\widetilde{\pi}_{s_{1}}$ the representation of $\Isomn$ on $\LLB$ defined by $\pinew_{s_{1}}(g)=A(g)\oplus D(g)$; it preserves the quadratic form of hyperbolic type $U_{1}\oplus -U_{2}$ where $U_{1}$ is the restriction of $B_{1}$ to $H^{0}\oplus H^{1}$. 

\smallskip

Now, one would like to transform the representation $\pi_{s}$, to make it converge to the diagonal representation $\widetilde{\pi}_{s_{1}}$ rather than to the triangular one $\pi_{s_{1}}$. We shall make the off-diagonal block $C(g)$ in~(\ref{reptriangulaire}) vanish as $t\to 1$ by a conjugation which depends on $t$. For that purpose we introduce the map $u_{t} \colon  \LLB \to \LLB$ defined by \mbox{$u_t= \Id\oplus\sqrt{1-t}\Id$} with respect to the decomposition $(H^{0}\oplus H^{1})\oplus V_{2}$. Then, for each $s\in (s_{0},s_{1})$, we define the new representation $\pinew_{s}$ of $\Isomn$ on $\LLB$ by: 
$$\pinew_{s}(g):=u_{t}\circ \pi_{s}(g)\circ u_{t}^{-1}.$$ 
A consequence of the next proposition is that $\pinew_s$ converges to $\pinew_{s_{1}}$ as $s$ goes to $s_1$. 

\begin{prop}\label{commentchoisirunnom} For each vector $v\in \LLB$ the map 
$$(s,g) \mapsto \pinew_{s}(g)(v)$$ 
is continuous on $(s_0,s_1]\times \Isomn$.
The representation $\pinew_{s}$ preserves the symmetric bilinear form $\BNEW_{t}$ defined by:
\begin{align*}
\BNEW_{t}(v,v)&=B_{t}(u_{t}^{-1}(v),u_{t}^{-1}(v)) \kern 7mm \text{if $t\in (0,1)$},\\
\BNEW_{1}&=U_{1}\oplus -U_{2}.
\end{align*}
\end{prop}

To make the statement of the proposition more transparent, let us describe more concretely the bilinear forms $\BNEW_{t}$. The decomposition
$$\LLB = \bigoplus_{k=0}^{\infty} H^k$$
is orthogonal for all these forms; on $H^0$ and $H^1$, the form $\BNEW_t$ coincides with $B_t$; on $H^k$ with $k\ge 2$, the form $\BNEW_t$ is equal to 
$$\frac{-t}{(t+n-1)(t+n)}\prod_{j=2}^{k-1}\frac{j-t}{j+t+n-1}$$
times the standard ${\rm L}^{2}$ scalar product.

\begin{proof}
We will decompose the operator $\pi_{s}(g)$ block-wise according to the decomposition $\LLB=\left( H^{0}\oplus H^{1}\right) \oplus V_{2}$. We write
$$\pi_{s}(g)=\left(
\begin{array}{cc}
A_{s}(g) & N_{s}(g) \\
C_{s}(g) & D_{s}(g) \\
\end{array}\right),$$
where $A_{s}(g)$ maps $H^0\oplus H^1$ into itself, $N_{s}(g)$ maps $V_2$ into $H^0\oplus H^1$, etc. If $v\in \LLB$, we write $v=(u,w)$ where the first coordinate is in $H^0\oplus H^1$ and the second in $V_2$. Using this notation one finds (for $t<1$)
$$\pinew_{s}(g)(u,w)=(A_{s}(g)(u)+\frac{N_s(g)(w)}{\sqrt{1-t}}, \sqrt{1-t}C_{s}(g)(u)+D_{s}(g)(w)).$$
According to Lemma~\ref{OOO}, $\pi_{s}(g)(u,w)$ is a continuous function of $(s,g)$, hence $A_{s}(g)(u)$, $\sqrt{1-t}C_{s}(g)(u)$, $\frac{N_s(g)(w)}{\sqrt{1-t}}$ and $D_{s}(g)(w)$ are continuous functions of $(s,g)$ on $(s_0,s_1)\times \Isomn$. For the continuity at points of the form $(s_1,g_{1})$, one simply has to deal with the term 
$$\frac{N_s(g)(w)}{\sqrt{1-t}}.$$
We decompose the operator $v=u+w\mapsto \iota(s,g,v)$ appearing in Lemma~\ref{OOO} block-wise:
$$\iota(s,g,u+w)=(\iota_1(s,g,u)+\iota_2(s,g,w),\iota_{3}(s,g,u)+\iota_4(s,g,w)).$$
Lemma~\ref{OOO} guarantees that $\frac{N_s(g)(w)}{\sqrt{1-t}}=-\frac{\sqrt{1-t}}{n-1}\iota_2(s,g,w)$, which converges to $0$ as $(s,g)$ converges to $(s_1,g_1)$. This proves the statement about the continuity of $\pinew_{s}(g)$. The fact that $\pinew_{s}$ preserves $\BNEW_{t}$ is clear. 
\end{proof}

\medskip

Let $B$ be the bilinear form on $\LLB$ defined by $B(v,v)=\vert v_{0}\vert^{2}-\sum_{k=1}^{\infty}\vert v_{k}\vert^{2}$. We will denote by $W\subset \LLB$ the dense subspace of vectors $v$ such that $v_{k}=0$ for all but finitely many $k$. For $t\in (0,1]$, we choose a $K$-equivariant operator $\varphi^{t}$ of ${\rm L}^{2}(\partial \HH)$ such that 
$$B(\varphi^{t}(v),\varphi^{t}(v))=\BNEW_{t}(v,v) \;\;\;\;\; (v\in \LLB)$$
($\varphi^{t}$ is unique up to a choice of sign on each $H^{k}$). From now on, we will denote by $\HHI$ the hyperbolic space constructed from the form $B$.  Observe that $\varphi^t$ is not invertible (this follows for instance from Lemma~\ref{tendverszero}). However $(\varphi^{t})^{-1}$ is well-defined on the subspace $W$ defined above. Hence for each $g\in \Isomn$, the map

$$\varphi^{t}\circ \pinew_{s}(g) \circ (\varphi^{t})^{-1} : W \to \LLB$$
is well-defined and preserves $B$.  One can thus consider the restriction 
\begin{equation}\label{avantderniereequation}
\varphi^{t}\circ \pinew_{s}(g) \circ (\varphi^{t})^{-1}  : W\cap \HHI \to \HHI
\end{equation}
This map being an isometric embedding (and $W\cap \HHI$ being dense in $\HHI$) it admits an extension to all of $\HHI$. We denote by $\rhonew_{t}(g)$ this extension. The map 
$$\rhonew_{t} : \Isomn \to \Isomi$$
is obviously a homomorphism. We now claim that the family $(\rhonew_{t})_{0<t\le 1}$ satisfies all the conditions of Proposition~\ref{identification_rho}. 
\begin{itemize}
\item For $t\in (0,1)$ the map $\varphi^{t}\circ u_t$ is defined on a dense subset of the hyperboloid $\HHI_{t}$ constructed from the form $B_{t}$ and is an isometric embedding into $\HHI$. It thus extends to an isometry $\HHI_{t}\to \HHI$ which intertwines $\varrho_t$ and $\rhonew_t$. This proves~\eqref{conjoriginal}. 
\item A similar argument shows that $\rhonew_1$ is conjugated by $\varphi^1$ to the action given by $\pinew_{s_{1}}$ on the hyperboloid constructed from $\widetilde{B}_{1}$. Since $\pinew_{s_{1}}$ is reducible, we obtain~\eqref{boutintervalle}. 
\item We have to check the continuity of the map $(t,g)\mapsto \rhonew_{t}(g)(p)$ for any $p\in \HHI$. Since the maps $\rhonew_{t}(g)$ are all isometric, it suffices to do so for a dense set of $p$, for instance for $p\in W\cap \HHI$. Now, the topology on $\HHI$ is the one induced by $\LLB$ hence it is enough to prove the continuity of the map 
$$(t,g)\mapsto \rhonew_{t}(g)(p)$$ considered as a map with values into $\LLB$, for each $p\in W\cap \HHI$. By linearity we can assume that $p$ lies in a single $H^k$. In that case $(\varphi^{t})^{-1}$ is just the multiplication by a constant depending on $t$, hence it is enough to prove that $(t,g)\mapsto \varphi^{t}(\pinew_{s}(g)(p))$ is continuous. According to Proposition~\ref{commentchoisirunnom}, $\pinew_{s}(g)(p)$ is continuous as a function of $(s,g)$. Using the fact $\varphi^t$ is uniformly bounded for $t\in (0,1]$, one proves as in Proposition~\ref{reducibility} that $\varphi^t(\pinew_{s}(g)(p))$ is continuous in $(t,g)$ (controlling separately the first $N$ coordinates of $\varphi^t(\pinew_{s}(g)(p))$ and its tail). This proves~\eqref{supercontinuity}.
\item The constant function $\boldsymbol{1}_{\partial\HH} \in H^0\subset \LLB$ is fixed by $\rhonew_t(K)$ for all $t$, thus proving~\eqref{etoilenebougepas}. 
\end{itemize}

This concludes the proof of Proposition~\ref{identification_rho}.\qed

\subsection{Towards a renormalization limit}\label{renor}

In this paragraph we make a few remarks concerning the behavior of the spaces $C_{t}$ and of the actions $\varrho_{t}$ as $t$ converges to $0$. The next proposition shows that the right scale to renormalize the family $C_{t}$ as $t$ goes to $0$ is $\sqrt{t}$. 

\begin{prop}\label{prop:renorm}
Fix any $g\in\Isomn$. Then

$$\lim_{t\to 0}\ \frac{d(\pi_{s}(g)(\ast),\ast)}{\sqrt{t}}$$

\noindent exists and is positive unless $g$ is in $\mathrm{O}(n)$.
\end{prop}

We start with an elementary observation.

\begin{lemma}\label{lemma:jac}
Let $g$ be a smooth diffeomorphism of a compact Riemannian manifold $M$ and $n>1$. We endow $M$ with its normalized volume form. If $g$ does not preserve the Riemannian measure of $M$, then the function

$$t\longmapsto \int_M \vert \jac(g)\vert^\frac{n-1+t}{n-1}$$

\noindent has a Taylor series around $t=0$ starting with $1 + a_1 t + \cdots$ where $a_1>0$.
\end{lemma}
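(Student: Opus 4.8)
The plan is to set $h(t) = \int_M |\jac(g)|^{\frac{n-1+t}{n-1}}$ and compute its first two Taylor coefficients at $t=0$ directly by differentiating under the integral sign. Since $M$ is compact and $g$ a smooth diffeomorphism, $\jac(g)$ is a smooth nowhere-vanishing function, so $|\jac(g)|$ is smooth and bounded away from $0$ and $\infty$; hence $t\mapsto |\jac(g)|^{\frac{n-1+t}{n-1}}$ is smooth in $t$ with all $t$-derivatives dominated by integrable functions, and differentiation under the integral is legitimate.

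First I would record that at $t=0$ the integrand is $|\jac(g)|$, whose integral over $M$ (with the normalized volume form) is exactly $1$: this is the change-of-variables formula, $\int_M |\jac(g)| \, d\mathrm{vol} = \int_M d\mathrm{vol} = 1$. This gives the constant term. Next, writing $u = \log|\jac(g)|$, we have $\frac{d}{dt}|\jac(g)|^{\frac{n-1+t}{n-1}} = \frac{1}{n-1}\, u \, |\jac(g)|^{\frac{n-1+t}{n-1}}$, so
$$a_1 = h'(0) = \frac{1}{n-1}\int_M u\, |\jac(g)|\, d\mathrm{vol} = \frac{1}{n-1}\int_M \log|\jac(g)|\cdot|\jac(g)|\, d\mathrm{vol}.$$
Now apply the change of variables $y = g^{-1}(x)$, under which $|\jac(g)|\,d\mathrm{vol}$ pushes forward to $d\mathrm{vol}$ and $\log|\jac(g)(g(y))| = -\log|\jac(g^{-1})(y)|$; alternatively, and more cleanly, substitute directly to get $\int_M \log|\jac(g)|\cdot|\jac(g)|\,d\mathrm{vol} = \int_M \log\frac{1}{|\jac(g^{-1})|}\,d\mathrm{vol} = -\int_M \log|\jac(g^{-1})|\,d\mathrm{vol}$. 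By Jensen's inequality applied to the strictly convex function $-\log$ and the probability measure $d\mathrm{vol}$,
$$-\int_M \log|\jac(g^{-1})|\,d\mathrm{vol} \;\geq\; -\log\int_M |\jac(g^{-1})|\,d\mathrm{vol} \;=\; -\log 1 \;=\; 0,$$
with equality if and only if $|\jac(g^{-1})|$ is constant almost everywhere, i.e. equal to $1$ (since its integral is $1$), i.e. if and only if $g^{-1}$ — equivalently $g$ — preserves the Riemannian measure. Under the hypothesis that $g$ does not preserve this measure, the inequality is strict, so $a_1 = \frac{1}{n-1}\bigl(-\int_M \log|\jac(g^{-1})|\,d\mathrm{vol}\bigr) > 0$, as claimed.

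The only point requiring a little care is the justification of differentiation under the integral sign and the identification of $h(0)=1$; both are routine given smoothness and compactness, so there is no real obstacle here. The substantive content is simply the Jensen inequality step together with its equality case, which is exactly what detects whether $g$ preserves the measure. I would also note in passing that the same computation shows $h$ is in fact real-analytic in $t$ (the integrand is analytic in $t$ with analytic dependence controlled uniformly on $M$), which is what licenses speaking of "the Taylor series" rather than merely a finite expansion.
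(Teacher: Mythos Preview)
Your proof is correct and follows essentially the same route as the paper: compute $h(0)=1$ via change of variables, differentiate under the integral to obtain $a_1=\frac{1}{n-1}\int_M |\jac(g)|\log|\jac(g)|$, rewrite this by the substitution $x\mapsto g^{-1}x$ as $-\frac{1}{n-1}\int_M \log|\jac(g^{-1})|$, and conclude strict positivity from the equality case of Jensen's inequality. Your additional remarks on the legitimacy of differentiating under the integral and on real-analyticity are welcome elaborations, but the argument itself matches the paper's.
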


\begin{proof}[Proof of the lemma]
We only need to show that the derivative at $t=0$ is positive, recalling $\int_M \vert \jac(g)\vert=1$. Differentiating under the integral sign, that derivative is

$$\frac1{n-1} \int_M \vert \jac(g)\vert \log\big(\vert \jac(g)\vert \big)\ = \ -\frac1{n-1} \int_M \log\big(\vert \jac(g^{-1})\vert \big),$$

\noindent where the second expression is obtained by change of variable. The equality case of Jensen's inequality shows that the latter expression is positive unless $| \jac(g^{-1})|$ is identically~$1$.
\end{proof}

In other words, $a_1$ measures the Kullback--Leibler divergence between $|\jac(g)|$ and $1$.

\begin{proof}[Proof of Proposition~\ref{prop:renorm}]
By the polar decomposition, any element $g$ can be written $g=k g_u k'$ where $k, k'\in \mathrm{O}(n)$ and $g_u=g_{e^{u},0,{\rm Id}}$ as before. Thus it suffices to consider the case of $g_u$ (and we can assume $u>0$). The formula~(\ref{eq:distance}) of Section~\ref{princeseries} give
$$\cosh d(\pi_{s}(g_u)(\ast),\ast) = \int_{\bS^{n-1}} \jac(g_u)^\frac{n-1+t}{n-1}.$$
Therefore, Lemma~\ref{lemma:jac} combined with the Taylor series of the hyperbolic cosine concludes the proof.
\end{proof}

The representation $\pi_{s_{0}}$ on $\LLB$ was defined by \mbox{$\pi_{s_{0}}(g)\cdot f=|{\rm Jac}(g^{-1})| f\circ g^{-1}$.} As shown by a change of variable argument, it preserves the subspace $V_{1}\subset \LLB$ of functions with mean $0$.  The representation $\pi_{s_{0}}$ also preserves the inner product $U_{1}$ on $V_1$ defined by:
$$U_{1}(v,v)=\underset{t\to 0}{{\rm lim}}\frac{-B_{t}(v,v)}{t}.\;\;\;\;\; (v\in V_{1})$$
The proof of this fact is similar to the proof of Proposition~\ref{reducibility}. The map $c\colon \Isomn \to V_{1}$ defined by 
$$c(g)=\pi_{s_{0}}(g)(1)-1,$$
(where $1$ stands for the constant function equal to $1$) is a cocycle, since it is a formal coboundary. Let $\varrho_{0}$ be the associated affine action:
$$\varrho_{0}(g)(v)=\pi_{s_{0}}(g)(v)+c(g).$$
Note that this action is considered in~\cite[chap. 3,  \S\, 3.4]{ccjjv}. However, there the authors are interested in the relation between the family of unitary representations $\pi_{s}$ for $s\in(0,s_{0})$ (the complementary series) and $\pi_{s_{0}}$. Here, we approach $\pi_{s_{0}}$ from the other side of the looking glass. It would be interesting to determine whether the $\Isomn$-spaces $\frac{1}{\sqrt{t}}C_{t}$ admit a limit as $t$ goes to $0$ and if they do, to relate the limit to the affine action $\varrho_{0}$ just described.

\bigskip
 A related problem is to determine minimal invariant convex sets for affine actions on Hilbert spaces. It is asked in~\cite{ctv} (see Remark 4.9 there) whether there exists an isometric action of a semisimple Lie group on a (non-zero, real) Hilbert space $\sH$ for which the closed convex hull of any orbit is equal to all of $\sH$. In that context, we shall prove the following dichotomy. (Untill the end of section~\ref{renor}, Hilbert spaces are real, except in Corollary~\ref{cor:Gelfand}.)

\begin{prop}\label{rayon}
Let $G$ be a topological group with a continuous isometric action on a Hilbert space $\mathscr{H}$. Assume that its linear part is irreducible and without $K$-invariants for some compact subgroup $K$. Then any closed $G$-invariant convex set either is $\mathscr{H}$ or has empty geometric boundary.
\end{prop}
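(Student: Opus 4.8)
Let $C\subseteq\sH$ be a closed $G$-invariant convex set and suppose that $\partial C\neq\emptyset$; I want to conclude $C=\sH$. The plan is to show that $\partial C$ is a $G$-invariant closed subset of the sphere at infinity of $\sH$ (in the Hilbert-space, i.e.\ weak-type, boundary) and to derive a contradiction from non-triviality unless $C=\sH$. First I would recall that a nonempty closed convex set $C$ in a Hilbert space which is not all of $\sH$ has nonempty \emph{norm} boundary; the geometric (visual) boundary $\partial C$ is something else, consisting of directions $\xi$ such that some (equivalently, any) ray in $C$ eventually stays at bounded distance from the ray $t\mapsto x_0+t\xi$. The key point is that if $\xi\in\partial C$, then the ray from a basepoint in direction $\xi$ has all of its points within bounded distance of $C$, and by convexity and completeness one can in fact find an actual ray contained in $C$ pointing towards $\xi$ (this uses that $C$ is closed convex in a Hilbert space, so that the asymptotic cone / recession cone of $C$ is a well-defined closed convex cone, and $\partial C$ is precisely the set of unit vectors in the recession cone).

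\textbf{Key steps.} Carrying this out: (1) identify $\partial C$ with the set of unit vectors in the recession cone $\mathrm{rec}(C)=\{v\in\sH:\ C+v\subseteq C\}$, a closed convex cone; this cone is $G$-invariant under the \emph{linear part} $\pi$ of the action, since $g(C+v)=gC+\pi(g)v$ and $gC=C$. (2) Since $\partial C\neq\emptyset$, $\mathrm{rec}(C)\neq\{0\}$, so its closed linear span $W$ is a nonzero closed $\pi(G)$-invariant subspace; by irreducibility of $\pi$, $W=\sH$. (3) Now use the compact subgroup $K$: the cone $\mathrm{rec}(C)$ need not be $K$-invariant because $K$ need not fix $C$, but one can replace $C$ by $\bigcap_{k\in K}kC$, which is still closed convex, nonempty (as $K$ is compact, any orbit is bounded and by \cat0 / Hilbert convexity has a circumcenter lying in each $kC$, hence in the intersection — alternatively average), $G$-invariant, and $K$-invariant; its recession cone is $\bigcap_k \pi(k)\mathrm{rec}(C)$ and is now $K$-invariant. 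If this intersected cone is nonzero, its span is a nonzero closed $\pi(G)$-invariant subspace, again all of $\sH$ by irreducibility, and moreover the cone is $\pi(K)$-invariant. (4) A nonzero closed convex cone which is $\pi(K)$-invariant and whose span is $\sH$ must contain a $\pi(K)$-fixed vector — indeed, averaging a nonzero vector $v$ of the cone over the Haar measure of $K$, $\bar v=\int_K\pi(k)v\,dk$ lies in the cone by convexity and closedness, and is $K$-fixed; but $\bar v$ could a priori be $0$. This is exactly where the ``no $K$-invariants'' hypothesis bites: I need to rule out that the cone, though nonzero, produces only the zero average. The way around is step (5).

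\textbf{The main obstacle, and its resolution.} The genuine difficulty is that $\bar v$ may vanish, so I cannot directly produce a $K$-fixed vector in the cone and contradict the absence of $K$-invariants that way. Instead I would argue as follows: if $\mathrm{rec}(C')$ (for $C'=\bigcap_k kC$) is nonzero, then its span is $\sH$, so $C'$ contains translates $C'+v_i$ for $v_i$ spanning a dense subspace; iterating, $C'$ contains $x_0+$ (the convex cone generated by the $v_i$), and taking closures one gets that $C'$ contains a translate of the closed convex cone generated by $\mathrm{rec}(C')$, whose span is dense, so in fact $\mathrm{rec}(C')$ is a closed convex cone spanning $\sH$; a standard fact is then that such a cone with nonempty interior... but it need not have nonempty interior. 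The cleanest route is: a closed convex cone $P$ in a Hilbert space with $\overline{\mathrm{span}}(P)=\sH$ and $P\cap(-P)=\{0\}$ has the property that its dual cone $P^{*}$ also spans; and if $P$ is additionally $\pi(K)$-invariant, then averaging over $K$ maps $P$ into $P\cap\sH^{K}=P\cap\{0\}$ unless $\sH^K\cap P\neq\{0\}$ — so if there are no $K$-invariants, every vector of $P$ averages to $0$, which for a pointed cone ($P\cap(-P)=\{0\}$) forces... here one uses that $\int_K\pi(k)v\,dk=0$ with $v\in P$ and $P$ pointed and $\pi(K)$-invariant means the orbit $\pi(K)v$ has barycenter $0$ while lying in the pointed cone $P$, whence by a separation/extreme-point argument $\pi(K)v$ lies in a supporting hyperplane through $0$; running this over all $v\in P$ shows $P$ lies in a proper closed subspace, contradicting $\overline{\mathrm{span}}(P)=\sH$. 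If on the other hand $P=\mathrm{rec}(C')$ is not pointed, i.e.\ contains a line $\R w$, then $C'$ contains a full line, hence (being closed convex $G$-invariant) $C'$ contains the closed subspace spanned by $\pi(G)w$, which by irreducibility is $\sH$, so $C'=\sH$ and therefore $C=\sH$. In all cases we contradict ``$\partial C\neq\emptyset$ and $C\neq\sH$'', completing the proof. I expect step (5)/the pointedness dichotomy to be the part requiring the most care, since it is the only place where the hypotheses ``irreducible'' and ``no $K$-invariants'' are genuinely combined.
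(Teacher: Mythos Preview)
Your argument is essentially correct and rests on the same core idea as the paper's: produce a nonzero linear functional $\varphi$ that is nonnegative on the ray-directions in $C$, use the absence of $K$-invariants to force $\varphi$ to vanish on those directions, and contradict irreducibility. But you have inserted two unnecessary detours, and step~(3) rests on a confusion worth correcting.

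In step~(1)--(2) you already establish that $\mathrm{rec}(C)$ is invariant under the \emph{linear part} $\pi(G)$, hence in particular under $\pi(K)$. Your step~(3) then claims the opposite and proposes to remedy it by intersecting $\bigcap_{k\in K}kC$; but $C$ is $G$-invariant by hypothesis, so $\alpha(k)C=C$ for all $k\in K$ and that intersection is just $C$ again. Drop step~(3) entirely.

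The paper also avoids your pointed/non-pointed dichotomy. Instead of applying duality to the recession cone, it separates $C$ directly: since $C\neq\mathscr{H}$, Hahn--Banach gives $\varphi\neq 0$ and $a\in\R$ with $C\subseteq\{\varphi\geq a\}$; any ray $x_0+\R_{\geq 0}v\subseteq C$ then forces $\varphi(v)\geq 0$, and applying $\alpha(k)$ gives $\varphi(\pi(k)v)\geq 0$ for all $k\in K$. The paper then invokes a short preparatory lemma: for a compact group acting linearly without invariant vectors, the closed convex cone generated by any orbit $\pi(K)v$ coincides with the full cyclic subspace, in particular it contains $-v$. (This is proved exactly by your barycenter computation $\int_K\pi(k)v\,dk=0$.) Hence $\varphi(-v)\geq 0$, so $\varphi(v)=0$ for every ray-direction $v$; thus $\partial C\subseteq\ker\varphi$, a proper closed subspace, while $\partial C$ is $\pi(G)$-invariant --- contradicting irreducibility. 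This route uses one fixed $\varphi$ throughout and needs no case split, whereas your phrasing ``$\pi(K)v$ lies in a supporting hyperplane through $0$\ldots running this over all $v$'' risks suggesting a $v$-dependent hyperplane; make explicit that a single $\varphi\in P^{*}\setminus\{0\}$ (which exists by the bipolar theorem since $P$ is pointed, hence $P\neq\mathscr{H}$) does the job for all $v$ simultaneously.
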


(The geometric boundary is still understood in the \cat0 sense~\cite[II.8]{bh}; in the present case it is non-empty if and only if the convex subset of $\mathscr{H}$ contains a half-line.)

\medskip

Recall that a \emph{Gelfand pair} $(G, K)$ consists of a locally compact group $G$ with a compact subgroup $K$ such that the convolution algebra $\sA$ of compactly supported bi-$K$-invariant continuous functions on $G$ is commutative. This algebra is called the \emph{Hecke algebra}. Examples include all simple connected center-free Lie groups $G$ with a maximal compact subgroup $K$; the criterion originally due to Gelfand also applies to $G=\Isomn$ and $K=\mathrm{O}(n)$, see e.g.~\cite{faraut}.

\begin{cor}\label{cor:Gelfand}
Let $(G,K)$ be a Gelfand pair with $G$ compactly generated. Consider a continuous isometric action of $G$ on a complex Hilbert space $\sH$ with nontrivial linear part. Assume that the linear part is irreducible over $\R$. If $G$ contains an element with positive translation length, then $\sH$ is the only non-empty $G$-invariant closed convex set.   
\end{cor}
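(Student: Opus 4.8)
The plan is to deduce the statement from Proposition~\ref{rayon}, which requires two verifications: that the linear part has no non-zero $K$-invariant vector, and that the second alternative of that proposition — empty geometric boundary — cannot occur under the present hypothesis.

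\emph{No $K$-invariants.} Write $\rho$ for the linear part and $\pi$ for the affine action. Since $(G,K)$ is a Gelfand pair and $\rho$ is irreducible, $\dim\mathscr{H}^{K}\le 1$. If $\rho$ is trivial then $\mathscr{H}\cong\mathbf{R}$ and the statement is immediate, since a $G$-invariant closed convex subset of $\mathbf{R}$ must be stable under translation by the non-trivial subgroup $\lambda(G)$ (where $\lambda\colon G\to\mathbf{R}$ is the character describing $\pi$) and hence is empty or all of $\mathbf{R}$. So assume $\rho$ non-trivial. If $\dim\mathscr{H}^{K}=1$, then $\rho$ is a non-trivial irreducible \emph{spherical} representation; the existence of an element of positive displacement length says precisely that the cocycle defining $\pi$ is not a coboundary, i.e.\ $\overline{H}^{1}(G,\rho)\ne 0$, which contradicts the fact that a non-trivial irreducible spherical representation of a compactly generated Gelfand pair has vanishing reduced $1$-cohomology. (I would establish the latter by averaging a cocycle against the commutative Hecke algebra: the resulting functional is a derivation relative to the spherical function and the augmentation, hence essentially determined up to a scalar, and feeding this back into the cocycle identity forces a coboundary; alternatively, when $G$ is a simple Lie group with finite centre, one invokes the known classification of the irreducible unitary representations with non-vanishing reduced cohomology, none of which is spherical.) Thus $\mathscr{H}^{K}=0$ and Proposition~\ref{rayon} applies: every non-empty $G$-invariant closed convex $C\subseteq\mathscr{H}$ is either $\mathscr{H}$ or has empty geometric boundary.

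\emph{Producing a ray.} An element $g$ with $\ell(g)>0$ forces every non-empty $G$-invariant closed convex $C$ to contain a geodesic ray, which rules out the second alternative. Decompose $\mathscr{H}=\mathscr{H}^{\rho(g)}\oplus W$ orthogonally, and split the translation part of $\pi(g)$ as $b(g)=b_{0}+b_{1}$ accordingly; one checks that $\ell(g)=\|b_{0}\|$, so $b_{0}\ne 0$. Fix $x\in C$ and write $x=x_{0}+x_{1}$ with $x_{0}\in\mathscr{H}^{\rho(g)}$, $x_{1}\in W$. Since $\rho(g)$ preserves both summands, the $\mathscr{H}^{\rho(g)}$-component of $\pi(g)^{n}x$ equals $x_{0}+n b_{0}$, while its $W$-component equals $\rho(g)^{n}x_{1}+\sum_{k=0}^{n-1}\rho(g)^{k}b_{1}$, of norm $o(n)$ by the von Neumann mean ergodic theorem applied to $\rho(g)|_{W}$, whose fixed space is trivial. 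Hence $\tfrac1n(\pi(g)^{n}x-x)\to b_{0}$ in norm. For $\tau\ge 0$ and $n\ge\tau$ the point $x+\tfrac{\tau}{n}(\pi(g)^{n}x-x)$ is a convex combination of $x$ and $\pi(g)^{n}x$, hence lies in $C$; letting $n\to\infty$ and using that $C$ is closed gives $x+\tau b_{0}\in C$ for all $\tau\ge 0$, which (reparametrised by arclength) is a geodesic ray in $C$. Therefore $\partial_{\infty}C\ne\emptyset$, so $C=\mathscr{H}$; as $\mathscr{H}$ is itself a non-empty $G$-invariant closed convex set, it is the unique one.

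\emph{Main obstacle.} The delicate point is the cohomological input above: that a non-trivial irreducible spherical representation of a compactly generated Gelfand pair carries no non-trivial reduced $1$-cocycle. It could also be bypassed by strengthening Proposition~\ref{rayon} so as to permit a one-dimensional space of $K$-invariants, since the ray argument makes no use of the hypothesis $\mathscr{H}^{K}=0$.
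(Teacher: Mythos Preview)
Your overall architecture is exactly the paper's: verify the hypothesis of Proposition~\ref{rayon} (no $K$-invariants in the linear part), then exhibit a geodesic ray in any non-empty invariant closed convex set to exclude the ``empty boundary'' alternative.

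The gap you yourself flag is the crux, and your Hecke-algebra sketch is not a proof. The paper dispatches this step in one line by citing Delorme~\cite[Prop.~V.3]{delorme}: for a Gelfand pair, if an irreducible unitary representation carries a non-trivial $1$-cocycle, then it has no $K$-invariant vector. Since an element of positive displacement length forces the cocycle to be unbounded (hence not a coboundary), this gives $\mathscr{H}^K=0$ directly, without passing through the dichotomy ``trivial vs.\ spherical''. Your contrapositive reformulation is equivalent, but the reference replaces the missing argument. (A side remark: your claim $\dim\mathscr{H}^K\le 1$ from the Gelfand property is the classical bound for \emph{complex} unitary representations; over $\R$ the paper's own Proposition~\ref{prop:schur} only gives $\le 2$. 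This does not matter once Delorme's result is available.)

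For the ray, your argument and the paper's coincide. The paper translates so that $0\in C$, writes $\alpha(g^n)(0)=\sum_{j=0}^{n-1}\pi(g)^j b(g)$, and invokes the von Neumann ergodic theorem to conclude that $\alpha(g^n)(0)/n$ converges to a non-zero vector (its norm being the displacement length of $g$). Your decomposition $b(g)=b_0+b_1$ with $b_0$ in the $\pi(g)$-fixed space is precisely the identification of that ergodic limit, and your convex-combination passage to the limit is what the paper means by ``the segment $[0,\alpha(g^n)(0)]$ converges to a ray in $C$''.
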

\begin{proof}[Proof of the corollary]
Let $\alpha$ be the action, $\pi$ be its linear part, $b$ the associated cocycle and $g\in G$ an element with positive translation length. The existence of $g$ implies that $G$ has no fixed point, i.e. the action is non-trivial in cohomology. By~\cite[Prop.~V.3 p.~306]{delorme} this implies that $\pi(K)$ has no invariant vectors. Let $C$ be a closed convex invariant set. If $C$ is non-empty, we can assume that $0\in C$. In view of Proposition~\ref{rayon}, it suffices to prove that the segment $[0,\alpha(g^{n})(0)]$ converges to a ray in $C$. The assumption on $g$ implies that $|\alpha(g^{n})(0)|/n$ converges to a positive number. Since
$$\alpha(g^{n})(0)=\sum_{j=0}^{n-1}\pi(g)^{j}(b(g)),$$
the von Neumann ergodic theorem implies that $\alpha(g^{n})(0)/n$ converges to a non-zero vector.\end{proof}

\begin{rem} The argument in the proof above, combined with Moore's theorem~\cite{moore}, implies that if $G$ is a simple Lie group, all elements have translation length $0$ for any isometric action of $G$ on a Hilbert space.  Indeed if $g$ had positive translation length, $\alpha(g^{n})(0)/n$ would converge to a non-zero $\pi(g)$-invariant vector; by Moore's theorem this vector is $\pi(G)$-invariant, but $G$ being perfect implies that $b$ projects trivially on the subspace of $\pi(G)$-invariants.
\end{rem}

We will need the following lemma, which shows in particular that for compact groups, cyclic vectors are ``positively cyclic'' in the absence of invariant vectors. The assumption on invariant vectors is of course necessary. The compactness of the group is also necessary, as shown by the example of the regular representation of an infinite discrete group.

\begin{lemma}\label{prop:chinoise}
Let $\pi$ be any continuous linear representation of a compact group $K$ on a (Hausdorff) locally convex topological vector space $V$ over $\R$. If $V^K=0$, then the closed convex cone generated by the $K$-orbit of any $v\in V$ coincides with the $K$-cyclic sub-representation generated by~$v$.
\end{lemma}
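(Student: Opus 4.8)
The plan is to reduce to a Hahn--Banach separation argument. Denote by $W$ the closed $K$-cyclic subrepresentation generated by $v$, i.e.\ the closed linear span of $\pi(K)v$, and by $C$ the closed convex cone generated by the orbit $\pi(K)v$. Clearly $C\subseteq W$, and $W$ is the closed linear span of $C$, so it suffices to prove $C=W$. Suppose not; then there is a point $w\in W\setminus C$, and since $C$ is a closed convex set in the locally convex space $W$ (with the subspace topology), the Hahn--Banach separation theorem furnishes a continuous linear functional $\phi$ on $W$ and a real number $\alpha$ with $\phi(w)>\alpha\geq\phi(c)$ for all $c\in C$. Because $C$ is a cone (closed under multiplication by nonnegative scalars), taking $c=0\in C$ gives $\alpha\geq 0$, and scaling any $c\in C$ by large positive factors forces $\phi(c)\leq 0$ for all $c\in C$; in particular $\phi(\pi(k)v)\leq 0$ for every $k\in K$.

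The next step is to average. Using the Haar probability measure $dk$ on the compact group $K$, consider the vector
$$\bar v\ =\ \int_K \pi(k)v\ dk\ \in\ W,$$
where the integral is the (weak, hence by compactness and local convexity also the genuine $V$-valued) Bochner-type integral of the continuous compactly supported $V$-valued map $k\mapsto\pi(k)v$; it lies in $W$ since $W$ is closed and convex and contains the whole orbit. By continuity of $\pi(k')$ and invariance of Haar measure, $\pi(k')\bar v=\bar v$ for all $k'\in K$, so $\bar v\in V^K$. The hypothesis $V^K=0$ then gives $\bar v=0$. On the other hand, applying the continuous functional $\phi$ and interchanging it with the integral yields
$$0\ =\ \phi(\bar v)\ =\ \int_K \phi(\pi(k)v)\ dk.$$
Since the integrand is $\leq 0$ everywhere and continuous, it must vanish identically: $\phi(\pi(k)v)=0$ for all $k\in K$. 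Hence $\phi$ vanishes on the orbit, therefore on its closed linear span $W$, contradicting $\phi(w)>\alpha\geq 0$ unless $\phi(w)=0$; but $\phi(w)>\alpha\geq0$ is strict, a contradiction. Therefore $C=W$.

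The only point requiring a little care is the legitimacy of the vector-valued integral and the interchange $\phi\circ\int=\int\circ\,\phi$: this is standard for a continuous map from a compact space into a Hausdorff locally convex space, the integral being characterized by $\phi\bigl(\int_K \pi(k)v\,dk\bigr)=\int_K\phi(\pi(k)v)\,dk$ for all continuous linear functionals $\phi$, and its existence following from completeness of the closed convex hull of the (compact) orbit — which we may assume, or alternatively one works in the completion and notes at the end that $\bar v$, being a fixed vector, already lies in $V$. I expect this integration step to be the main (very mild) obstacle; the separation and averaging arguments themselves are routine. Note the compactness of $K$ is used precisely to produce the invariant average, and the hypothesis $V^K=0$ to kill it — exactly as the remark preceding the lemma indicates, the statement fails without either assumption.
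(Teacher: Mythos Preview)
Your proof is correct and takes a somewhat different route from the paper's. The paper argues constructively: it reduces to showing $-v\in C$ (whence, by $K$-invariance of $C$, one gets $\pm\pi(K)v\subseteq C$ and therefore $W\subseteq C$). For this, given a convex neighbourhood $U$ of $-v$, one picks a small neighbourhood $A\ni e$ in $K$ with $\pi(A)v\subseteq -U$ and exhibits the element $\frac{1}{\mu(A)}\int_{K\setminus A}\pi(g)v\,d\mu(g)\in C\cap U$; the membership in $U$ comes from rewriting this integral as $-\frac{1}{\mu(A)}\int_{A}\pi(g)v\,d\mu(g)$ via $\int_K\pi(g)v\,d\mu=0$. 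Your Hahn--Banach separation argument is cleaner and more textbook, though both proofs ultimately rest on the same averaging step $\bar v=\int_K\pi(k)v\,dk=0$. One small quibble: your proposed workaround for the existence of $\bar v$ (``$\bar v$, being a fixed vector, already lies in $V$'') does not quite work as stated, since $V^K=0$ need not imply $\hat V^K=0$ for the completion $\hat V$, so $\bar v$ could a priori be a nonzero fixed vector in $\hat V\setminus V$. The paper's proof tacitly makes the same completeness assumption; it is harmless in the intended application, where $V$ is a Hilbert space.
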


\begin{proof}[Proof of the lemma]
It suffices to show that the closed convex cone generated by $\pi(K)(v)$ contains $-v$. Let thus $U$ be an arbitrarily small convex neighborhood of $-v$. There is an open neighborhood $A$ of the identity in $K$ such that $\pi(g)(v)\in -U$ for all $g\in A$. Let $\mu$ be the normalized Haar measure of $K$. We claim that
\begin{equation}\label{eq:chinoise}
\frac1{\mu(A)} \int_{K\setminus A} \pi(g) (v)\, d\mu(g)
\end{equation}
belongs to $U$, in which case the proof is complete.

\smallskip
To prove the claim, observe that $\int_K \pi(g) (v)\, d\mu(g)$ is $K$-invariant and hence vanishes. Therefore, the expression in~(\ref{eq:chinoise}) coincides with $-\frac1{\mu(A)} \int_A \pi(g) (v)\, d\mu(g)$. Thus the claim follows from the choice of $A$ and the convexity of $-U$.
\end{proof}

\begin{proof}[Proof of Proposition~\ref{rayon}] We denote by $\alpha$ the isometric action on $\sH$ and write $$\alpha(g)(v)=\pi(g)(v)+c(g)$$ for $v\in \sH$. Consider a closed invariant convex set $C$. We must prove that if $C$ contains a half-line, then $C=\mathscr{H}$. Let $c(t)=x_{0}+tv$ ($t\ge 0$) be a half-line in $C$ and assume by contradiction that $C\neq \mathscr{H}$. Then, there exists a non-zero linear form $\varphi$ on $\mathscr{H}$ and a constant $a$ such that
$$C\subset \{x\in \mathscr{H}, \varphi(x)\ge a\}.$$
Since $c(t)$ is in $C$, we must have $\varphi(v)\ge 0$. We can apply the same result to any vector of the form $\pi(k)(v)$ for $k\in K$ since $C$ contains the ray $\alpha(k)(c(t))$. In particular we have $\varphi(\pi(k)(v))\ge 0$ for all $k$ in $K$ hence $\varphi(u)\ge 0$ for any vector $u$ which is a linear combination with positive coefficients of the vectors $\{\pi(k)(v)\}_{k\in K}$. According to Proposition~\ref{prop:chinoise} we must have $\varphi(-v)\ge 0$ as well, hence $\varphi(v)=0$. Hence the geometric boundary of $C$ is contained in the boundary of a proper closed linear subspace of $\mathscr{H}$. This is impossible since $\pi$ is irreducible. Therefore $C=\mathscr{H}$.\end{proof}


\section{Further results}\label{further}

\subsection{Complements on trees}\label{furthertree}

One of the early motivations for this article was the analogy with representations of automorphism groups of trees. We recall in particular the following result from~\cite{bim}, in which it is implicitly assumed that the tree has no leaf (i.e.\ degree-one vertex); we shall keep this as a standing assumption.

\smallskip
\begin{center}
\begin{minipage}[t]{0.85\linewidth}
\itshape
For any simplicial tree $\mathscr T$ and every $\lambda>1$ there is a representation $\pi_\lambda\colon \mathrm{Aut}(\mathscr{T})\to \Isomi$ and a $\pi_\lambda$-equivariant map $\Psi_\lambda\colon\mathscr{T}\to \HHI$ extending continuously to the boundary with
$$\cosh d(\Psi_\lambda(x), \Psi_\lambda(y)) = \lambda^{d(x,y)}$$
for any vertices $x,y\in\mathscr{T}$. Moreover, $\Psi_\lambda(\mathscr{T})$ has finite coradius in the closed convex hull of $\Psi_\lambda(\partial\mathscr{T})$ in $\HHI$.
\end{minipage}
\end{center}

\smallskip
The argument of Section~\ref{sec:Ct:def} allows us to show that for locally finite trees, the conclusion can be strengthened. Just as for $\Isomn$, we obtain ``exotic'' proper \cat{-1} spaces:

\begin{prop}\label{prop:tree}
If the simplicial tree $\mathscr{T}$ is locally finite, then the closed convex hull $C$ of $\Psi_\lambda(\partial\mathscr{T})$ in $\HHI$ is locally compact. Thus, if $\mathrm{Aut}(\mathscr{T})$ acts cocompactly on $\mathscr{T}$, then (via $\pi_\lambda$) it acts cocompacty on $C$.
\end{prop}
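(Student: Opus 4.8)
The plan is to mimic the argument of Section~\ref{sec:Ct:def} almost verbatim, replacing $\HH$ by the tree $\mathscr{T}$ and $f_t$ by $\Psi_\lambda$. First I would pass to the Klein model of $\HHI$: the space $\HHI\cup\partial\HHI$ is identified with the closed unit ball $\overline B$ of a Hilbert space, geodesics become chords, the cone topology becomes the norm topology on $\overline B$, and the closed convex hull (in the hyperbolic sense) of a subset $S\se\partial\HHI$ is identified with $B\cap(\text{affine closed convex hull of }S$ in $\overline B)$. Apply this to $S=\Psi_\lambda(\partial\mathscr{T})$. Since $\mathscr{T}$ is locally finite, $\partial\mathscr{T}$ is compact for the usual topology, and $\Psi_\lambda$ extends continuously to $\partial\mathscr{T}\to\partial\HHI$ by the quoted result of~\cite{bim}; hence $\Psi_\lambda(\partial\mathscr{T})$ is a norm-compact subset of $\overline B$. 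Now Mazur's compactness theorem~\cite{mazur} (see e.g.~\cite[2.8.15]{meg}) shows that the closed convex hull of a norm-compact set in a Banach space is norm-compact. Therefore $C$, being the intersection of $B$ with a norm-compact set, is locally compact and closed as a subspace of $\HHI$, i.e.\ it is a proper metric space. Being a closed convex subset of the \cat{-1} space $\HHI$, it is itself \cat{-1}, with no branching geodesics.

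For the equivariance and cocompactness statement: $\pi_\lambda$ is an isometric $\mathrm{Aut}(\mathscr{T})$-action on $\HHI$ whose boundary set $\Psi_\lambda(\partial\mathscr{T})$ is $\mathrm{Aut}(\mathscr{T})$-invariant (because $\Psi_\lambda$ is $\pi_\lambda$-equivariant and extends continuously to the boundary), so $C$ is $\mathrm{Aut}(\mathscr{T})$-invariant. If $\mathrm{Aut}(\mathscr{T})$ acts cocompactly on $\mathscr{T}$, I would deduce cocompactness on $C$ exactly as in the proof of Proposition~\ref{prop:coco}: suppose some sequence $x_j\in C$ escapes every orbit-neighbourhood of a basepoint $x_0=\Psi_\lambda(o)$; after translating we may assume $d(x_j,\mathrm{Aut}(\mathscr{T})x_0)=d(x_j,x_0)$ and $x_j\to\xi\in\partial C=\Psi_\lambda(\partial\mathscr{T})$; pick $y_i$ in $\mathscr{T}$ with $\Psi_\lambda(y_i)\to\xi$ too, so the Gromov products $(\Psi_\lambda(y_i)\mid x_j)_{x_0}$ tend to infinity, and the point $z_{i,j}$ at that distance along $[x_0,x_j]$ drifts off the orbit while staying, by $\delta$-hyperbolicity, boundedly close first to $[x_0,\Psi_\lambda(y_i)]$ and then — using that $\Psi_\lambda$ sends geodesics of $\mathscr{T}$ to controlled quasi-geodesics, which follows from $\cosh d(\Psi_\lambda(x),\Psi_\lambda(y))=\lambda^{d(x,y)}$ — boundedly close to $\Psi_\lambda(\mathscr{T})$. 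Since $\mathrm{Aut}(\mathscr{T})$ is cocompact on $\mathscr{T}$, $\Psi_\lambda(\mathscr{T})$ is within bounded distance of the orbit $\mathrm{Aut}(\mathscr{T})x_0$, a contradiction.

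I do not expect any serious obstacle: every ingredient (Klein model, Mazur's theorem, the hyperbolicity bookkeeping) is already set up in Section~\ref{sec:Ct:def}, and the only structural differences — that $\mathscr{T}$ is a tree rather than $\HH$ and that we have $\cosh d=\lambda^{d}$ rather than the asymptotic estimate of Theorem~\ref{curv}(\ref{curv:qi}) — only make the quasi-geodesic control cleaner. The one point that deserves a word is that $C$ has nonempty geometric boundary equal to $\Psi_\lambda(\partial\mathscr{T})$ (so that the cocompactness argument can land its sequences in $\partial C$): this is exactly the statement, visible in the Klein model, that $\partial C'=\Psi_\lambda(\partial\mathscr{T})$, established just as in Section~\ref{sec:Ct:def}. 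If one wished, one could also note that $C$ equals the closed convex hull of $\Psi_\lambda(\mathscr{T})$ itself, since that hull contains $\Psi_\lambda(\partial\mathscr{T})$ in its boundary and is contained in $C$; but this is not needed for the statement.
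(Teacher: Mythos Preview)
Your local compactness argument is exactly the paper's: pass to the Klein model, observe that $\Psi_\lambda(\partial\mathscr{T})$ is norm-compact (since $\partial\mathscr{T}$ is compact for a locally finite tree and $\Psi_\lambda$ extends continuously to the boundary), and apply Mazur's theorem.

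For cocompactness, however, you re-run the full Gromov-product argument of Proposition~\ref{prop:coco}, whereas the paper simply invokes the last clause of the result quoted from~\cite{bim} immediately above the proposition: $\Psi_\lambda(\mathscr{T})$ already has \emph{finite coradius} in $C$. Given that, cocompactness of $\mathrm{Aut}(\mathscr{T})$ on $\mathscr{T}$ transports via $\Psi_\lambda$ to cocompactness on $\Psi_\lambda(\mathscr{T})$, and the coradius bound together with properness of $C$ yields cocompactness on $C$ in one line. Your longer argument is correct and in effect re-establishes that coradius bound through hyperbolicity and the quasi-geodesic estimate, but the shortcut is worth noticing since the coradius statement is already part of the input.
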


Thus, for instance, we deduce that the finitely presented torsion-free simple groups constructed in~\cite{burger-mozes} appear as ``convex-cocompact'' subgroups of isometries of $\HHI\times\HHI$.

\begin{proof}[Proof of Proposition~\ref{prop:tree}]
The second statement follows from the first since $\Psi_\lambda(\mathscr{T})$ has finite coradius in $C$. As for the first, it follows as in Section~\ref{sec:Ct:def} by applying the Mazur compactness theorem in the Klein Model.
\end{proof}

By the result of~\cite{bim} quoted above, the actions of  $\mathrm{Aut}(\mathscr{T})$ on $\HHI$ given by $\pi_\lambda$ are non-elementary unless the $\mathrm{Aut}(\mathscr{T})$-action on $\mathscr{T}$ is itself elementary. Thus, we can also consider the canonical convex subspace $C_\lambda\se \HHI$ provided by Lemma~\ref{lemma:C_exists}. This subset is contained in the convex hull $C$ of Proposition~\ref{prop:tree}, for instance because Lemma~\ref{lemma:C_exists} can be applied to both $ \HHI$ and $C$. However, in contrast to Lemma~\ref{lemma:all_C}, it seems that the minimal subset could be smaller than $C$ in general. At any rate, even for regular trees, the initial representation constructed in~\cite{bim} is far from irreducible before passing to a smaller subspace, and there is an infinite-dimensional subspace of $K$-fixed vectors, denoting by $K$ the stabilizer of a vertex.

\subsection{\texorpdfstring{Actions on the symmetric space of ${\rm O}(p,\infty)$}{Actions on other symmetric spaces}}\label{higherrank}
Let $\mathscr{H}$ be a separable real Hilbert space endowed with a basis $(e_{i})_{i\ge 1}$. In what follows, we will denote by ${\rm O}(p,\infty)$ the group of linear operators from $\mathscr{H}$ to itself which preserve the symmetric bilinear form $B_{p}$ defined by 
$$B_{p}\left(\sum_{i\ge 1}x_{i}e_{i},\sum_{i\ge 1}x_{i}e_{i}\right)=x_{1}^{2}+\cdots +x_{p}^{2}-\sum_{i\ge p+1}x_{i}^{2}.$$
We also denote by $X(p,\infty)$ the space of all $p$-dimensional subspaces $V\subset \sH$ on which the form $B_{p}$ is positive definite. The space $X(p,\infty)$ can be endowed with an ${\rm O}(p,\infty)$-invariant distance which turns it into an infinite dimensional symmetric space; see~\cite{duchesne,duchesne2} for an introduction to this space. In this section, we collect a few remarks concerning isometric actions of $\Isomn$ on $X(p,\infty)$.

\medskip
As we have seen in Section~\ref{princeseries} , the classical study of intertwiners for the spherical principal series provides a whole lot of irreducible representations
$$\Isomn \lra  {\rm O}(p,\infty),\kern5mm p=\binom{n-1+ j}{n-1}\kern5mm \text{where}\ j=0, 1, 2, \ldots$$

\begin{prob}\label{prob:ranks}
Show that the above representations exhaust all possible irreducible representations $\Isomn \to {\rm O}(p,\infty)$.
\end{prob}

This would in particular restrict the possible ranks~$p$ when $n\geq 3$, whilst we have already observed that for $n=2$, every rank $p\in\mathbf{N}$ can occur. We propose the following first evidence for Problem~\ref{prob:ranks}, where we restrict to representations of the identity component $\Isomn^{\circ}$ of $\Isomn$.

\begin{main}\label{thm:gelf}
Let $p$ be an integer with $2<p<n$, where $n>4$. Then there is no irreducible representation $\Isomn^{\circ} \to {\rm O}(p,\infty)$.
\end{main}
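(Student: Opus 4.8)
The plan is to rule out such a representation by combining a fixed-point argument for the maximal compact subgroup with the fact that $(\Isomn^{\circ},\mathrm{SO}(n))$ is a Gelfand pair whose symmetric space $\HH$ has rank one. Suppose $\varrho\colon\Isomn^{\circ}\to\mathrm{O}(p,\infty)$ is irreducible with $2<p<n$; write $\mathscr{H}$ for the ambient Hilbert space, $B_{p}$ for the invariant form of index $p$, $\pi$ for the underlying (topologically irreducible) linear representation, and put $G=\Isomn^{\circ}$, $K=\mathrm{SO}(n)=K^{\circ}$. We aim at a contradiction.

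First I would manufacture many $K$-fixed vectors. Since $X(p,\infty)$ is a complete \cat{0} space (see~\cite{duchesne}) and $\varrho(K)$ has bounded orbits, its circumcentre is a $\varrho(K)$-fixed point, i.e.\ a $K$-invariant $p$-dimensional subspace $V_{0}\subseteq\mathscr{H}$ on which $B_{p}$ is positive definite. Now $V_{0}$ is a $p$-dimensional representation of $\mathrm{SO}(n)$ with $2<p<n$. For $n\geq 5$ — and vacuously for $n=3$ — every representation of $\mathrm{SO}(n)$ of dimension $<n$ is trivial, so $V_{0}\subseteq\mathscr{H}^{K}$ and hence $\dim\mathscr{H}^{K}\geq p\geq 3$. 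The case $n=4$, $p=3$, in which $V_{0}$ might be one of the nontrivial three-dimensional representations $\Lambda^{2}_{\pm}\R^{4}$ of $\mathrm{SO}(4)$, is treated separately below.

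Assume now $\dim\mathscr{H}^{K}\geq 2$ and pick linearly independent $w_{1},w_{2}\in\mathscr{H}^{K}$ with $B_{p}(w_{1},w_{2})=0$ (any symmetric bilinear form on a space of dimension $\geq 2$ admits such a pair). Being $K$-finite, $w_{1}$ and $w_{2}$ are smooth vectors, so the matrix coefficients $\phi_{ij}\colon g\mapsto B_{p}(\varrho(g)w_{i},w_{j})$ are smooth and $K$-bi-invariant. Because $\pi$ is irreducible it has an infinitesimal character (one should check here that an irreducible representation preserving a form of finite index is quasi-simple), so the $\phi_{ij}$ are eigenfunctions of the commutative algebra $\mathbb{D}(\HH)$ of invariant differential operators, all with the same eigenvalue. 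As $\HH$ has rank one, $\mathbb{D}(\HH)=\C[\Delta]$ and the radial eigenvalue equation is a second-order ODE whose smooth solutions form a one-dimensional space spanned by the spherical function $\varphi$; hence $\phi_{ij}=B_{p}(w_{i},w_{j})\,\varphi$. In particular $\phi_{12}\equiv 0$, so the closed $G$-invariant subspace generated by $w_{1}\neq 0$ lies inside the proper hyperplane $\{x:B_{p}(x,w_{2})=0\}$, contradicting the irreducibility of $\pi$.

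It remains to handle $n=4$, $p=3$ with $V_{0}\cong\Lambda^{2}_{\pm}\R^{4}$, where the argument above produces no $K$-fixed vectors. Here I would restrict $\varrho$ to $H=\mathrm{SO}(3,1)^{\circ}\cong\PSL$ and its maximal compact $M=\mathrm{SO}(3)$: since $\Lambda^{2}_{\pm}\R^{4}$ restricts to $M$ as the standard representation, one can analyse the $\mathrm{SO}(3)$-spectrum of the $(\mathfrak{g},K)$-module underlying $\varrho$ and compare it with the possible $\mathrm{SO}(4)$-spectra of admissible $\mathrm{SO}(4,1)$-modules, or else appeal to Naimark's classification of representations of $\PSL$ on Pontryagin spaces~\cite{naimark1963} to exclude any such extension. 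I expect this residual case — and, underlying it, the verification that a topologically irreducible representation preserving a finite-index form is quasi-simple and admissible — to be the main obstacle; once the $K$-fixed-vector step and the rank-one spherical-function fact are available, the cases $n\geq 5$ follow immediately.
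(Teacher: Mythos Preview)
Your overall strategy matches the paper's: use a $K$-fixed point in $X(p,\infty)$ to obtain a $K$-invariant positive-definite $p$-dimensional subspace, deduce from $p<n$ that $K$ acts trivially on it so that $\dim\sH^{K}\geq p>2$, and then contradict irreducibility by a ``Gelfand pair $\Rightarrow$ few $K$-invariants'' principle.

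The substantive difference is in this last step, and it is precisely where you locate your own doubt. You try to run the classical spherical-function argument: matrix coefficients of $K$-fixed vectors are bi-$K$-invariant eigenfunctions of $\Delta$, hence proportional to a single spherical function. But this presupposes an infinitesimal character, i.e.\ quasi-simplicity, which for a merely topologically irreducible representation on a Pontryagin space is not available for free; you flag this yourself. The paper sidesteps the issue entirely. Its Proposition~\ref{prop:schur} works purely at the level of the Hecke algebra $\sA=\mu_{K}\ast\mathrm{C}_{c}(G)\ast\mu_{K}$: one checks that $B_{p}$ restricts non-degenerately to $\sH^{K}$, that $\sA$ acts there as a \emph{commutative, symmetric} algebra (symmetric because $\pi(g)^{\dagger}=\pi(g^{-1})$), and then invokes Naimark's theorem~\cite{naimark1963} that any commutative symmetric operator algebra on a space carrying a sesquilinear form of finite index preserves a finite-dimensional non-negative subspace. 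The finite-dimensional Schur lemma then yields an $\sA$-invariant $L\se\sH^{K}$ of real dimension at most~$2$; since $\mathrm{C}_{c}(G)\cdot L$ is dense by irreducibility and the projector $P=\pi(\mu_{K})$ sends it back into $\sA\cdot L\se L$, one concludes $\sH^{K}=L$. No smooth vectors, no infinitesimal character, no rank-one input beyond the Gelfand property --- so the gap you worried about simply does not arise.

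On $n=4$, $p=3$: you are correct that $\mathrm{SO}(4)$ admits the $3$-dimensional representations $\Lambda^{2}_{\pm}\R^{4}$, so the assertion that every nontrivial representation of $\mathrm{SO}(n)$ has dimension at least $n$ is delicate there. The paper does not single out this case; your proposed workaround via restriction to $\mathrm{SO}(3,1)^{\circ}$ and Naimark's classification is plausible but is not actually carried out in your write-up, so this remains a loose end on your side as well.
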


Of course, the first subquestion of Problem~\ref{prob:ranks} left open by the theorem above is the existence of irreducible representations 
$$\Isomn^{\circ} \lra {\rm O}(2,\infty)$$
when $n\ge 3$.

\bigskip

The analogue of the following proposition for \emph{unitary} representations is well-known. It cannot possibly hold for arbitrary representations on a Hilbert space, but finite index sesquilinear forms provide just enough spectral theory to obtain the conclusion. (We recalled the definition of Gelfand pairs just before Corollary~\ref{cor:Gelfand} above.)

\begin{prop}\label{prop:schur}
Let $(G, K)$ be a Gelfand pair and $k=\R$ or $\C$. Let $\pi$ be a continuous $k$-linear representation of $G$ on a $k$-Hilbert space $\sH$ preserving a continuous, strongly non-degenerate (sesqui)linear form of finite index. If $\pi$ is irreducible, then the space $\sH^K$ of $K$-invariant vectors has $k$-dimension at most~$1$ if $k=\C$ and at most~$2$ if $k=\R$.
\end{prop}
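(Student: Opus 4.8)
The plan is to run the classical Gelfand argument that forces $\dim\sH^{K}\le 1$ for an irreducible \emph{unitary} spherical representation, replacing its one use of unitarity --- Schur's lemma, i.e.\ that the commutant is a field --- by the spectral theory that survives for self-adjoint operators on a space carrying an indefinite inner product of finite index (a Pontryagin space).

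The first step is the reduction to the Hecke algebra. As $K$ is compact and $\pi$ continuous, after averaging the scalar product we may assume it is $K$-invariant; then $P:=\int_{K}\pi(k)\,dk$ is the orthogonal projection onto $W:=\sH^{K}$. For $f$ in the Hecke algebra $\sA$, the operator $\pi(f)=\int_{G}f(g)\pi(g)\,dg$ is bounded ($g\mapsto\|\pi(g)\|$ is locally bounded, by Banach--Steinhaus) and satisfies $\pi(f)=P\pi(f)P$, so $\Phi\colon f\mapsto\pi(f)|_{W}$ is an algebra homomorphism with \emph{commutative} image, acting on $W$. Two facts then feed the proof. First, irreducibility of $\pi$ forces $\Phi(\sA)$ to have no proper non-zero closed invariant subspace $W_{1}\se W$: such a $W_{1}$ would generate a proper non-zero closed $\pi(G)$-invariant subspace of $\sH$, since an approximate-identity computation --- using that matrix coefficients of $\pi(C_{c}(G))$ between vectors of $W$ already come from $\Phi(\sA)$ --- shows that this $\pi(G)$-invariant subspace meets $W$ in exactly $W_{1}$. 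Second, if $\#$ is the adjoint for the invariant form $B$, then $\pi(g)^{\#}=\pi(g^{-1})$ yields that $\Phi(\check f)$ is the $B_{W}$-adjoint of $\Phi(f)$, where $B_{W}:=B|_{W\times W}$ and $\check f(g):=f(g^{-1})$; as $\sA$ is commutative and stable under $f\mapsto\check f$, every $\Phi(f)$ is $B_{W}$-normal, the fixed set $\sA^{+}:=\{f:\check f=f\}$ is a commutative subalgebra, and $\Phi(\sA^{+})$ consists of $B_{W}$-self-adjoint operators.

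Now I would bound $\dim_{k}W$. The radical $W_{0}$ of $B_{W}$ is isotropic in $\sH$, hence finite-dimensional, and it is $\Phi(\sA)$-invariant, so by the first fact $W_{0}=0$ or $W_{0}=W$; in the latter case $\dim_{k}W\le p<\infty$ and we finish as at the end. So assume $(W,B_{W})$ is a Pontryagin space of index $\kappa\le p$. I claim $\Phi(\sA^{+})=\R\,\Id$. Let $T\in\Phi(\sA^{+})$ be non-scalar. If $\kappa=0$ the spectral theorem gives a non-trivial spectral projection of $T$; if $\kappa\ge1$, Pontryagin's theorem gives a non-zero finite-dimensional $T$-invariant subspace, and, taking a $k$-irreducible factor $q$ of the minimal polynomial of the restriction of $T$ to that subspace, $\ker q(T)$ is non-zero. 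In either case this is a proper non-zero closed $\Phi(\sA)$-invariant subspace unless $q(T)$ vanishes on all of $W$; but a linear $q$ makes $T$ scalar, while a quadratic $q(x)=(x-a)^{2}+b^{2}$ (possible only over $\R$) would make $J:=(T-a)/b$ a $B_{W}$-self-adjoint complex structure with $B_{W}(Jv,Jv)=-B_{W}(v,v)$, carrying an infinite-dimensional $B_{W}$-definite subspace onto one of the opposite sign --- absurd since $\kappa$ is finite. Hence $\Phi(\sA^{+})=\R\,\Id$. Finally, for $N=\Phi(f)$ with $f\in\sA^{-}:=\{\check f=-f\}$, the operator $N$ is $B_{W}$-skew and $N^{2}=\mu\,\Id\in\Phi(\sA^{+})$; one checks $\mu\le0$, that $\mu=0$ forces $N=0$, and that $\mu<0$ makes $J:=N/\sqrt{-\mu}$ a complex structure commuting with $\Phi(\sA)$. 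Thus $\Phi(\sA)$ lies in $\R\,\Id$, or in $\R\,\Id\oplus\R J$ acting by complex scalars; in either case every closed subspace (resp.\ every closed $J$-complex subspace) is $\Phi(\sA)$-invariant, so the first fact gives $\dim_{\C}W\le1$ or $\dim_{\R}W\le2$. This also finishes the case $W_{0}=W$: a commutative algebra acting irreducibly on a finite-dimensional $k$-space has, over $\C$, a common eigenvector, so $\dim\le1$, and, over $\R$, by Burnside has commutant equal to a finite-dimensional division algebra which must be $\R$ or $\C$ by commutativity, so $\dim_{\R}W\le2$.

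The main obstacle is precisely the substitute for Schur's lemma just used: that a $B_{W}$-self-adjoint operator on the Pontryagin space $W$ which is not a real scalar always has a proper closed subspace invariant under everything commuting with it. This is where finiteness of the index is indispensable, and it is supplied by Pontryagin's invariant-subspace theorem (for $\kappa\ge1$) together with the elementary complex-structure obstruction above. Apart from this, I expect only routine bookkeeping: the Hecke algebra is typically non-unital, so approximate identities must be carried along; the scalar field must be tracked throughout, the bound $2$ over $\R$ being genuinely attained by actions factoring through a copy of $\C$; and the transfer of irreducibility between $\pi$ and $\Phi$ needs to be spelled out.
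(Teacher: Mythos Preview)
Your argument is correct and runs parallel to the paper's, but the two diverge at the crucial ``Schur substitute'' step.  Both proofs set up the Hecke algebra $\sA$ acting on $W=\sH^{K}$, observe that $\sA$ is commutative and $*$-closed for the $B$-adjoint, and use the same irreducibility transfer (your ``first fact'' is exactly the paper's computation $P(\overline{C_c(G)L})\subseteq\overline{\sA L}\subseteq L$, read contrapositively).  The paper, however, does not try to prove that $\Phi(\sA)$ consists of scalars.  Instead it quotes Naimark's theorem for commutative symmetric operator algebras on a $\Pi_\kappa$-space to obtain in one stroke a \emph{finite-dimensional} $\sA$-invariant subspace of $\sH^{K}$, passes to an $\sA$-irreducible piece $L$ of that (dimension $\le 1$ over $\C$, $\le 2$ over $\R$ by classical Schur), and then the irreducibility transfer forces $L=\sH^{K}$.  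Your route replaces the Naimark black box by Pontryagin's single-operator theorem together with the pleasant signature obstruction ruling out a $B$-self-adjoint complex structure; this is more elementary and actually yields the stronger conclusion $\Phi(\sA)\subseteq\R\,\Id$ or $\R\,\Id\oplus\R J\cong\C$.  Two small points where the paper is slicker: it shows directly that $B|_{W}$ is non-degenerate from $P^{\dagger}=P$ (so your case $W_0=W$ never arises), and it avoids having to treat separately the possibility that $W$ is finite-dimensional with split signature, where your $J$-obstruction does not immediately bite --- though, as you note, the classical finite-dimensional argument then finishes anyway.
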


\noindent
As always, irreducibility means that there is no \emph{closed} $G$-invariant proper subspace. The \emph{strong} non-degeneracy refers to a completeness condition, see~\cite{bim}.

\begin{proof}[Proof of Proposition~\ref{prop:schur}]
We choose a Haar measure on $G$ (necessarily bi-invariant since $G$ must be unimodular~\cite[24.8.1]{Simonnet}) and we denote by $\sA$ the convolution algebra of compactly supported bi-$K$-invariant continuous functions on $G$. We consider the more complicated case $k=\R$; it will be clear from the proof that it applies to $k=\C$, with minor simplifications. Let $B$ be the bilinear form under consideration.

For any continuous linear operator $T$ of $\sH$, we denote by $T^\dagger$ its $B$-adjoint, i.e. the unique continuous linear operator such that $B(Tu, v) = B(u, T^\dagger v)$ holds for all $u,v\in \sH$. The $B$-adjoint exists: it can be produced as $T^\dagger = J^{-1}T^*J$ if $J=J^*$ is the operator with $B(x,y)=\langle J x, y\rangle$. By assumption, we have
\begin{equation}\label{eq:adj}
\pi(g)^\dagger\ =\ \pi(g^{-1}) \kern5mm \forall\, g\in G.
\end{equation}
We can extend $\pi$ to a representation of the convolution algebra $\mathrm{C}_c(G)$ of all compactly supported continuous functions on $G$ by integration against the Haar measure. Let $\mu_K$ be the normalized Haar measure of $K$, seen as a measure on $G$. Then $\mu_K$ is a convolution idempotent and
$$\mu_K * \mathrm{C}_c(G) * \mu_K \ =\ \sA\ =\ \mu_K * \sA * \mu_K.$$
Further, we can define the continuous linear operator $P:=\pi(\mu_K)$, yielding a $K$-invariant operator $P\colon\sH\to\sH^K$ which is the identity on $\sH^K$. In particular, the space $\sH^K$ is $\sA$-invariant.

We claim that $B$ is non-degenerate on $\sH^K$, thus turning it itself into a quadratic space of finite index, say $q\geq 0$. Indeed, suppose that $v$ is a non-zero element of $\sH^K$. There is $w\in\sH$ with $B(v, w)\neq 0$. Using~(\ref{eq:adj}) and the fact that $K$ is unimodular, being compact, we can make the change of variable $k\mapsto k^{-1}$ in the equation $P=\int_{K}\pi(k)dk$ and find $P^\dagger = P$. Thus $B(v, w) = B(P v, w) = B(v, P w)$, which proves the claim.

Next, in view of the definition of $\sA$ and of the fact that a Gelfand pair is unimodular, we observe that~(\ref{eq:adj}) also implies that $\sA$ is \emph{symmetric} in the sense that $a^\dagger \in\sA$ for all $a\in \sA$. A result of Naimark (Corollary~2 in~\cite{naimark1963}) states that a commutative symmetric algebra of operators of a complex Hilbert space endowed with a sesquilinear form of finite index~$q$ preserves a (non-negative) subspace of dimension~$q$. Applying this to the $\sA\otimes\C$-representation on the complexification $\sH^K_\C$ endowed with $B_\C$, we deduce that $\sA$ preserves a finite-dimensional subspace of $\sH^K$. At that point, the classical finite-dimensional Schur lemma applies and we find a subspace $L\se \sH^K$ of dimension one or two invariant under $\sA$ (unless $\sH^K=0$, in which case we are done). Since $\pi$ is irreducible, the space $W$ spanned by $\mathrm{C}_c(G)\cdot L$ is dense in $\sH$. Therefore, $P(W) = \sA\cdot L \se L$ shows that $L= \sH^K$, finishing the proof.
\end{proof}

\begin{proof}[Proof of Theorem~\ref{thm:gelf}]
 Since $K^{\circ}={\rm SO}(n)$ is compact, it must fix a point in the symmetric space ${\rm X}(p,\infty)$. This means that there is a positive definite, $p$-dimensional subspace $V\se \mathscr{H}$ which is $K^{\circ}$-invariant. The smallest dimension of a non-trivial representation of $K^{\circ}$ being $n$ when $n\neq 4$, the hypothesis $p<n$ implies that the action of $K^{\circ}$ on $V$ is trivial. Since $p>2$, this implies that the space $\mathscr{H}^{K^{\circ}}$ has dimension greater than~$2$, and the representation $\pi$ cannot be irreducible according to Proposition~\ref{prop:schur}.
\end{proof}

We conclude with a last proposition. We will say that a representation $\varrho \colon G \to {\rm O}(p,\infty)$ is {\it geometrically Zariski dense} if $\varrho(G)$ does not fix any point at infinity in $X(p,\infty)$ and does not preserve any non-trivial closed totally geodesic sub-manifold. For finite dimensional symmetric spaces, this is equivalent to the Zariski density in the usual sense. When $p=1$, $X(1,\infty)=\HHI$ and any totally geodesic submanifold is the intersection of $\HHI$ with a linear subspace of the corresponding Hilbert space (in the hyperboloid model). In that case a representation $\varrho$ is irreducible if and only if it is geometrically Zariski dense. There is no such correspondence when $\HHI$ is replaced by $X(p,\infty)$ ($p\ge 2$). However, one still has: 

\begin{prop} Let $\varrho \colon  G \to {\rm O}(p,\infty)$.
\begin{itemize}
\item If $\varrho$ is geometrically Zariski dense, then $\varrho$ is irreducible.
\item If $\varrho$ is irreducible, $\varrho(G)$ does not fix a point in the boundary of $X(p,\infty)$.
\end{itemize}
\end{prop}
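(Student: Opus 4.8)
I would prove both assertions by contraposition, translating the two geometric hypotheses into the existence, respectively the non-existence, of a proper non-zero closed $\varrho(G)$-invariant subspace of $\mathscr{H}$ (recall that ``irreducible'' means exactly that no such subspace exists). Throughout, I would rely on Duchesne's description of $X(p,\infty)$, of its totally geodesic submanifolds, and of its boundary at infinity~\cite{duchesne,duchesne2}; a basic input is that a $B_{p}$-isotropic subspace of $\mathscr{H}$ has dimension at most~$p$, and that $\partial X(p,\infty)$ is organised by such subspaces.

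\emph{First assertion.} Assume $\varrho$ is reducible and choose a proper non-zero closed $\varrho(G)$-invariant subspace $W\se\mathscr{H}$. Since $B_{p}$ is continuous and $\varrho(G)$-invariant, its $B_{p}$-orthogonal $W^{\perp}$ is again closed and $\varrho(G)$-invariant, and so is the radical $R:=W\cap W^{\perp}$, which is totally isotropic and hence of finite dimension at most~$p$. If $R\neq 0$, then $R$ is a non-zero $\varrho(G)$-invariant isotropic subspace, and Duchesne's analysis of $\partial X(p,\infty)$ produces from it a $\varrho(G)$-fixed point at infinity. If $R=0$, then $B_{p}$ restricts to a non-degenerate finite-index form on each of $W$ and $W^{\perp}$, so that $\mathscr{H}=W\oplus W^{\perp}$ is a $B_{p}$-orthogonal $\varrho(G)$-invariant decomposition (such a splitting is a standard feature of Pontryagin spaces). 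Writing $p_{1}$ for the dimension of a maximal positive-definite subspace of $W$ and $p_{2}=p-p_{1}$ for that of $W^{\perp}$, the collection of all planes $V_{1}\oplus V_{2}$, where $V_{1}\se W$ and $V_{2}\se W^{\perp}$ are positive-definite of dimensions $p_{1}$ and $p_{2}$, is a non-empty proper $\varrho(G)$-invariant totally geodesic submanifold of $X(p,\infty)$; it may be reduced to a single point (precisely when $W^{\perp}$ is negative definite), which is still a proper, hence non-trivial, totally geodesic submanifold. In every case $\varrho$ is not geometrically Zariski dense. One could also short-circuit this case analysis by invoking the structural trichotomy for isometric actions on $X(p,\infty)$ --- geometrically dense, or fixing a point at infinity, or preserving a proper totally geodesic submanifold --- which follows from~\cite{duchesne,duchesne2}.

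\emph{Second assertion.} Assume that $\varrho(G)$ fixes a point $\xi\in\partial X(p,\infty)$. By Duchesne's description of the boundary, $\xi$ canonically determines a non-zero $B_{p}$-isotropic subspace $\mathrm{Is}(\xi)\se\mathscr{H}$ of dimension between~$1$ and~$p$, and the assignment $\xi\mapsto\mathrm{Is}(\xi)$ is ${\rm O}(p,\infty)$-equivariant. Hence $\mathrm{Is}(\xi)$ is $\varrho(G)$-invariant; it is closed, being finite-dimensional, and proper, since a non-zero isotropic subspace cannot equal $\mathscr{H}$. Therefore $\varrho$ is not irreducible.

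The real content lies entirely in the dictionary between linear-algebraic data (closed invariant subspaces of $\mathscr{H}$ together with the restriction of $B_{p}$) and the coarse geometry of $X(p,\infty)$, which is exactly what Duchesne's work provides and what makes the finite-rank hypothesis $p<\infty$ indispensable. The step I expect to be the main obstacle is the degenerate case $R\neq 0$ of the first assertion: one must be sure that a $\varrho(G)$-invariant non-zero isotropic subspace yields an honest \emph{point} of $\partial X(p,\infty)$ fixed by $\varrho(G)$ (rather than merely an invariant boundary face on which $\varrho(G)$ need not have a fixed point), so that a careful use of the stratification of $\partial X(p,\infty)$, where finiteness of the rank keeps the strata manageable, is what is needed to complete the argument.
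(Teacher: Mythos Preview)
Your proof is correct and follows essentially the same route as the paper: split according to whether the restriction of $B_p$ to an invariant subspace is degenerate (your case $R\neq 0$) or not, and in the latter case exhibit an invariant product of lower-rank symmetric spaces, while in the former invoke Duchesne's boundary description. Your worry about the case $R\neq 0$ is unfounded: Proposition~6.1 of~\cite{duchesne2}, which the paper cites verbatim at both steps, states precisely that a non-zero $\varrho(G)$-invariant isotropic subspace corresponds to a $\varrho(G)$-fixed point in $\partial X(p,\infty)$, so no further stratification argument is needed.
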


\begin{proof}
We first assume that $\varrho$ is geometrically Zariski dense and prove its irreducibility. Let $V\subset \mathscr{H}$ be a closed non-zero invariant subspace. We assume by contradiction that $V\neq \mathscr{H}$. If the restriction of the quadratic form $B_{p}$ to $V$ is non-degenerate, then $\mathscr{H}=V\oplus V^{\perp}$ (see~\cite{bim}). Since $G$ has no fixed point in ${\rm X}(p,\infty)$, we can assume, upon replacing $V$ by its orthogonal, that $B_{p}$ is not definite on $V$. Hence it has signature $(q,q')$ for some positive integers $q\le p$ and $q'\le +\infty$. If $B_{p}$ is definite on $V^{\perp}$, this implies that $G$ preserve a totally geodesically embedded copy of the symmetric space $X(q,q')$. If $B_{p}$ is not definite on $V^{\perp}$, $G$ preserves a product of two symmetric subspaces $X(q,q')\times X(r,r')$, where $X(r,r')$ is associated to $V^{\perp}$. In any case, this is a contradiction. If on the other hand the restriction of $B_{p}$ to $V$ is degenerate, we can assume that $V$ is isotropic upon replacing it by $V\cap V^\perp$. According to Proposition~6.1 in~\cite{duchesne2}, this implies that $G$ preserves a point in the boundary at infinity of $X(p,\infty)$. This is a contradiction again.

\medskip
The second part of the proposition follows once again immediately from Proposition~6.1 in~\cite{duchesne2}.\end{proof}


\end{document}